\documentclass{elsarticle}



%
\usepackage{amsmath}%
\usepackage{amsfonts}%
\usepackage{amssymb}%
\usepackage{float}
\usepackage{amsthm}
\usepackage{graphicx}
\usepackage{color}
\usepackage[normalem]{ulem}
\usepackage{pdfsync}

\numberwithin{equation}{section}


%
\newtheorem{theorem}{Theorem}
\newtheorem{corollary}{Corollary}
\newtheorem{lemma}{Lemma}
\newtheorem{proposition}{Proposition}

\newtheorem{remark}{Remark}
\newtheorem{algorithm}{Algorithm}

\newcommand{\ve}{\varepsilon}

\renewcommand{\epsilon}{\varepsilon}

\definecolor{lzcol}{rgb}{0,0,1}

\makeatletter
\renewcommand{\maketag@@@}[1]{\hbox{\m@th\normalsize\normalfont#1}}%
\makeatother

\begin{document}
\begin{frontmatter}

\title{On an evolution equation in a cell motility model}

\author[psu]{Matthew S. Mizuhara\corref{cor1}\fnref{fn1}}
\ead{msm344@psu.edu}

\author[psu]{Leonid Berlyand\fnref{fn2}}
\ead{lvb2@psu.edu}

\author[adr]{Volodymyr Rybalko\fnref{fn2}}
\ead{vrybalko@ilt.kharkov.ua}

\author[adz]{Lei Zhang\fnref{fn3}}
\ead{lzhang2012@sjtu.edu.cn}

\cortext[cor1]{Corresponding author}

\fntext[fn1]{This author was supported by the Department of Defense (DoD) through the National Defense Science \& Engineering Graduate Fellowship (NDSEG) Program. He also received partial support from NSF grants DMS-1106666 and DMS-1405769.}

\fntext[fn2]{This author was supported by NSF grants DMS-1106666 and DMS-1405769.}

\fntext[fn3]{The work of this author was supported by NSFC grant 11471214 and 1000 Plan for Young Scientists of China.}

\address[psu]{Department of Mathematics, The Pennsylvania State University, University Park, PA 16802, USA}

\address[adr]{Mathematical Division, B.Verkin Institute for Low Temperature Physics and Engineering of the National Academy of Sciences of Ukraine, 47 Lenin Ave., 61103 Kharkiv, Ukraine}

\address[adz]{Institute of Natural Sciences and Department of Mathematics, Shanghai Jiaotong University; Key Laboratory of Scientific and Engineering Computing (Shanghai Jiao Tong University), Ministry of Education, 800 Dongchuan Road, 200240, Shanghai}

\begin{abstract}
	
	This paper  deals with the evolution equation of a curve obtained as the sharp interface limit of a non-linear  system of two reaction-diffusion PDEs. This system was introduced as a phase-field model of (crawling) motion of eukaryotic cells on a substrate. The key issue is the evolution of the cell membrane (interface curve) which involves shape change and net motion. This issue can be addressed both qualitatively and quantitatively by studying the evolution equation of the sharp interface limit for this system. However, this equation is non-linear and non-local and  existence  of  solutions presents a significant analytical challenge. We establish existence of solutions for a wide class of initial data in  the so-called subcritical regime. Existence is proved in a two step procedure. First, for smooth ($H^2$) initial data  we use a regularization technique. Second, we consider non-smooth initial data that are more relevant from the application point of view. Here, uniform estimates on the  time when solution  exists rely on  a maximum principle type argument.   
We also explore the long time behavior of the model using both analytical and numerical tools. We prove the nonexistence of traveling wave solutions with nonzero velocity. Numerical experiments show that presence of non-linearity and asymmetry of the initial curve results in a net motion which distinguishes it from classical volume preserving curvature motion. This is done by developing an algorithm for efficient numerical resolution of the non-local term in the evolution equation.
\end{abstract}

\begin{keyword} sharp interface limit equation \sep existence \sep  non-linear and non-local evolution equation \sep eukaryotic cell motility 

\MSC[2010]	
35Q92 \sep 
35K55 \sep 
65-04 \sep 
92B05 

\end{keyword}

\end{frontmatter}

\section{Introduction}

%

\subsection{Phase field PDE model of cell motility} 
This work is motivated by a 2D phase field model of crawling cell motility introduced in \cite{Zie12}. This model consists of a system of two PDEs for the phase field function and the orientation vector due to polymerization of actin filaments inside the cell. In addition it obeys a volume preservation constraint. In \cite{Ber14} this system was rewritten in the following simplified form suitable for asymptotic analysis so that all key features of its qualitative behavior are preserved.  \\
Let $\Omega\subset \mathbb{R}^2$ be a smooth bounded domain. Then, consider the following phase field PDE model of cell motility, studied in \cite{Ber14}:
\begin{equation}
\frac{\partial \rho_\ve}{\partial t}=\Delta \rho_\ve
-\frac{1}{\ve^2}W^{\prime}(\rho_\ve)
- P_\ve\cdot \nabla \rho_\ve +\lambda_\ve(t)\;\;
  \text{ in }\Omega,
\label{eq1}
\end{equation}
\begin{equation}
\frac{\partial P_\ve}{\partial t}=\ve\Delta P_\ve -\frac{1}{\ve}P_\ve
 -\beta \nabla \rho_\ve\;\;
 \text{ in } \Omega,
\label{eq2}
\end{equation}
where
\begin{equation*} \label{lagrange}
\lambda_\ve(t)=\frac{1}{|\Omega|}\int_\Omega\left(\frac{1}{\ve^2}W^\prime(\rho_\ve)
+ P_\ve\cdot \nabla \rho_\ve \right)\, dx,
\end{equation*}
is a {\em Lagrange multiplier} term responsible for total volume preservation of $\rho_\ve$, and
  \begin{equation}\label{doublewell}
  W(\rho)=\frac{1}{4} \rho^2 (1-\rho)^2
  \end{equation}
  is the Allen-Cahn (scalar Ginzburg-Landau) double equal well potential, and $\beta\geq 0$ is a physical parameter (see \cite{Zie12}). 
  \begin{figure}[H]
  \centering
  \includegraphics[width = .5\textwidth]{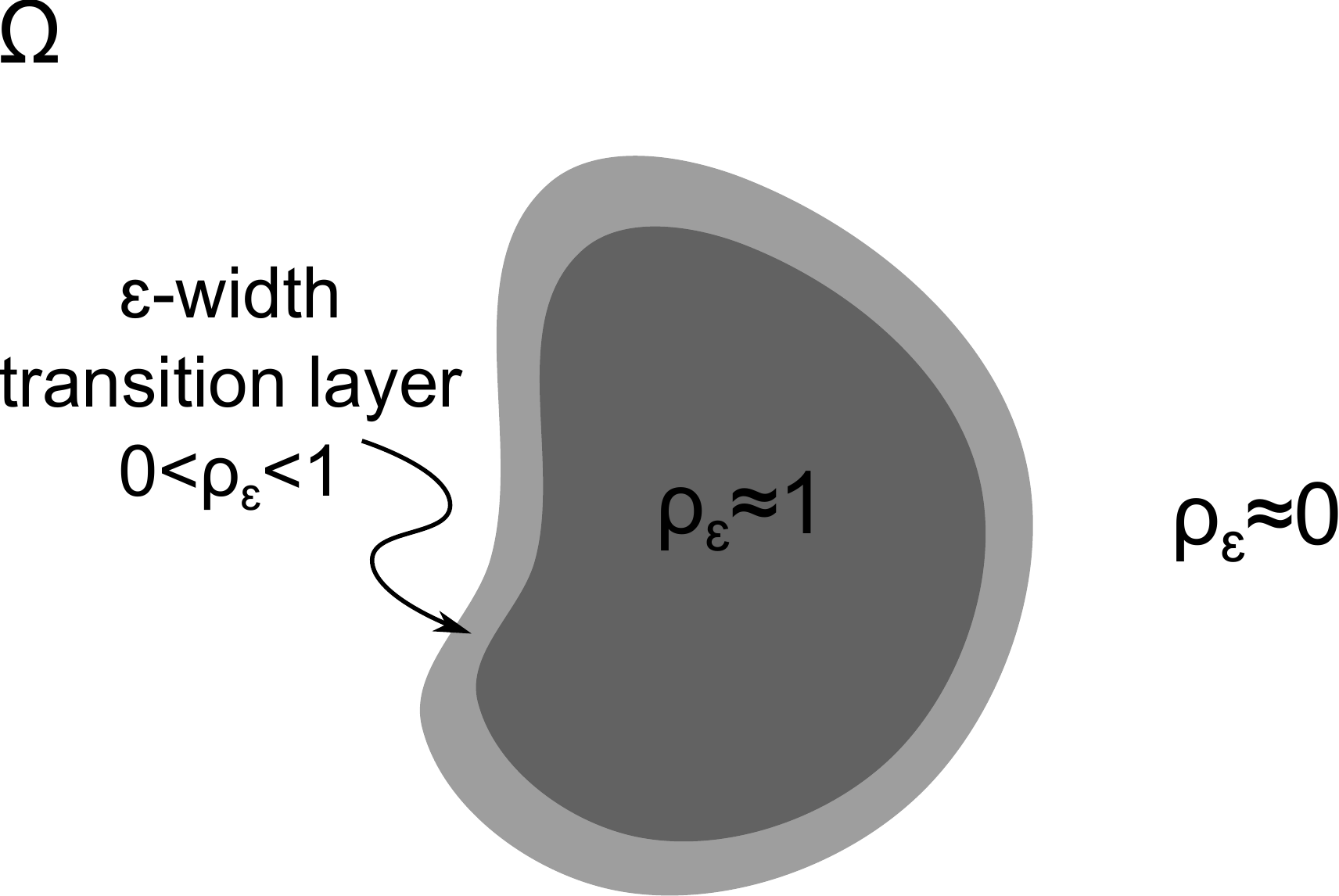}
  \caption{Sketch of phase-field parameter $\rho_\ve$}\label{figs:phasefield}
  \end{figure}
  In \eqref{eq1}-\eqref{eq2}, $\rho_\ve\colon \Omega\rightarrow \mathbb{R}$ is the phase-field parameter that, roughly speaking, takes values 1 and 0 inside and outside, respectively, a subdomain $D(t)\subset \Omega$ occupied by the moving cell. These regions are separated by a thin ``interface layer'' of width $O(\ve)$ around the boundary $\Gamma(t):=\partial D(t)$, where $\rho_\ve(x,t)$ sharply transitions from 1 to 0. The vector valued function $P_\ve \colon \Omega\rightarrow \mathbb{R}^2$ models the orientation vector due to polymerization of actin filaments inside the cell. On the boundary $\partial \Omega$ Neumann and Dirichlet boundary conditions respectively are imposed: $\partial_\nu \rho_\ve=0$ and   $P_\ve=0$.
  
 It was shown in \cite{Ber14} that  $\rho_\ve(x,t)$ converges to a characteristic function $\chi_{D(t)}$ as $\ve\to 0$, where $D(t)\subset \mathbb{R}^2$ . Namely, the phase-field parameter $0\leq \rho_\ve(x,t)\leq 1$ is equal to $1$ when $x\in D(t)$ and equal to $0$ outside of $D(t)$. This is referred to as the sharp interface limit of $\rho_\ve$ and we write $\Gamma(t):= \partial D(t)$. More precisely, given a closed non self-intersecting curve $\Gamma(0)\subset \mathbb{R}^2$, consider the initial profile
  \begin{equation*}\label{phaseIC}
  \rho_\ve(x,0) = \theta_0\left(\frac{\operatorname{dist}(x,\Gamma(0))}{\ve}\right)
  \end{equation*}
  where 
  \begin{equation*}\label{steadysolution}
  \theta_0(z) = \frac{1}{2}\left(\tanh\left(\frac{z}{2\sqrt{2}}\right)+1\right)
  \end{equation*}
  is the standing wave solution of the Allen-Cahn equation, and $\operatorname{dist}(x,\Gamma(0))$ is the signed distance from the point $x$ to the curve $\Gamma(0)$. Then, $\rho_\ve(x,t)$ has the asymptotic form
  \begin{equation*}\label{asymptoticform}
  \rho_\ve(x,t) = \theta_0\left(\frac{\operatorname{dist}(x,\Gamma(t))}{\ve}\right)+ O(\ve),
  \end{equation*}
  where $\Gamma(t)$ is a curve which describes the boundary of the cell. \newline
  
  It was formally shown in \cite{Ber14} that the curves $\Gamma(t)$ obey the evolution equation
  \begin{equation}\label{interface0}
  V(s,t) = \kappa(s,t) + \Phi(V(s,t)) - \frac{1}{|\Gamma(t)|}\int_\Gamma \kappa(s',t) +\Phi(V(s',t)) ds',
  \end{equation}
  where $s$ is the arc length parametrization of the curve $\Gamma(t)$, $V(s,t)$ is the  normal velocity of curve $\Gamma(t)$ w.r.t. inward normal at location $s$, $|\Gamma(t)|$ is the length of $\Gamma(t)$, $\kappa(s,t)$ is the signed curvature of $\Gamma(t)$ at location $s$, and $\Phi(\cdot)$ is a known smooth, non-linear function. \newline
 
  \begin{remark}\label{rem:betaequiv}
  In \cite{Ber14} it was shown that $\Phi(V)=\beta\Phi_0(V)$ 
  where $\Phi_0(V)$ is given by the equation
  \begin{equation}\label{phi}
  \Phi_0(V) := \int_\mathbb{R} \psi(z; V)(\theta_0')^2 dz
  \end{equation}
  and $\psi(z)=\psi(z;V)$ is the unique solution of
  \begin{equation}\label{phi1}
  \psi''(z)+V\psi'(z)-\psi(z)+ \theta_0' =0,
  \end{equation}
  with $\psi(\pm \infty)=0$. 
  
  \end{remark}
 
  The case $\beta=0$ in equations \eqref{eq1}-\eqref{eq2} leads to $\Phi\equiv 0$ in \eqref{interface0}, thus reducing to a mass preserving analogue of the Allen-Cahn equation. Properties of this equation were studied in \cite{Che10,Gol94}, and it was shown that the sharp interface limit as $\ve\to 0$ recovers volume preserving mean curvature motion: $V=\kappa-\frac{1}{|\Gamma|}\int_{\Gamma} \kappa ds.$

  Equations \eqref{eq1}-\eqref{eq2} are a singularly perturbed  parabolic PDE system in two spatial dimensions. Its sharp interface limit given by \eqref{interface0} describes evolution of the   curve $\Gamma(t)$ (the sharp interface). Since  $V(s,t)$ and  $\kappa(s,t)$ are expressed via first and second derivatives of  $\Gamma(t)(=\Gamma(s,t))$, equation \eqref{interface0} can be viewed as the second order PDE for  $\Gamma(s,t)$.   
  Since this PDE has spatial dimension one and it  does not contain a singular perturbation, qualitative and numerical analysis of \eqref{interface0} is much simpler  than  that  of the  system \eqref{eq1}-\eqref{eq2}. 
  
  
\begin{remark}  
It was observed in \cite{Ber14} that both the analysis and the behavior of  solutions of system \eqref{eq1}-\eqref{eq2} crucially depends on the parameter $\beta$. Specifically there is critical value $\beta_{cr}$ such that for $\beta>\beta_{cr}$ complicated phenomena of non-uniqueness and hysteresis arise. This critical value is defined as the maximum $\beta$ for which $V-\beta\Phi_0(V)$ is a monotone function of $V$.

 While this supercritical regime is a subject of the ongoing investigation, in this work focus on providing a rigorous analysis of subcritical regime $\beta<\beta_{cr}$. For equation \eqref{interface0}     
the latter regime corresponds to the case of {\it monotone function} 
$V-\Phi(V)$. 
\end{remark}

%
 \subsection{Biological background: cell motility problem}
 
 In \cite{Zie12} a phase field model that describes crawling motion of keratocyte cells on substrates was introduced.  Keratocyte cells are typically harvested from scales of fish (e.g., cichlids \cite{Ker08})  for {\em in vitro} experiments. Additionally, humans have keratocyte cells in their corneas. These cells are crucial during wound healing, e.g., after corrective laser surgery \cite{Moh03}.

  The biological mechanisms which give rise to keratocyte cell motion are complicated and they are an ongoing source of research. Assuming that a directional preference has been established, a keratocyte cell has the ability to maintain self-propagating cell motion via internal forces generated by a protein {\em actin}.  Actin monomers are polarized in such a way that several molecules may join and form filaments.  These actin filaments form a dense and rigid network at the leading edge of the cell within the cytoskeleton, known as the lamellipod.  The lamellipod forms adhesions to the substrate and by a mechanism known as {\em actin tread milling} the cell protrudes via formation of new actin filaments at the leading edge.  
  
  We may now explain the heuristic idea behind the model. Roughly speaking, cell motility is determined by two (competing) mechanisms: surface tension and  protrusion due to actin polymerization. 
  The domain where $\rho_\ve(x)\approx 1$  is occupied  the cell and $P_\ve$ as the local averaged orientation of the filament network.  Surface tension enters the model \eqref{eq1}-\eqref{eq2} via the celebrated Allen-Cahn equation with double-well potential \eqref{doublewell}:
  \begin{equation*}\label{ACmotion}
  \frac{\partial \rho_\ve}{\partial t} = \Delta \rho_\ve -\frac{1}{\ve^2} W'(\rho_\ve).
  \end{equation*}
   In the sharp interface limit ($\ve\to 0$), surface tension  leads to  the  curvature driven motion of the interface. The actin polymerization enters the system \eqref{eq1}-\eqref{eq2} through the $-P_\ve \cdot \nabla \rho_\ve$ term. Indeed, recall
   \begin{equation*}\label{materialderivative}
  \frac{D \rho_\ve}{Dt} = \frac{\partial \rho_\ve}{\partial t} + P_\ve \cdot \nabla \rho_\ve
  \end{equation*}
  as the material derivative of $\rho_\ve$ subject to the velocity field $P_\ve$. Thus the term $-P_\ve \cdot \nabla \rho_\ve$ is an advective term generated by actin polymerization. \newline
  The last term of \eqref{eq1}, $\lambda_\ve(t)$ is responsible for volume preservation, which is an important physical condition.
  The diffusion term $\ve \Delta P_\ve$ corresponds to diffusion of actin and does not significantly affect the dynamics of $\rho_\ve$. 
  The term $-\beta\nabla \rho_\ve$ describes the creation of actin by polymerization, which leads to a protrusion force. It gives the rate of growth of polymerization of actin: $\frac{\partial P_\ve}{\partial t} \sim -\beta \nabla \rho_\ve$.   The $\frac{1}{\ve} P_\ve$  term provides decay of $P_\ve$ away from the interface, for example due to depolymerization. 
  
 The system \eqref{eq1}-\eqref{eq2} is a slightly modified form of the model proposed in \cite{Zie12}. 
 It preserves  key features of the qualitative behavior yet is more convenient for  mathematical analysis.
  

    \subsection{Overview of results and techniques}\label{sec:overview}
    A main goal of this work is to prove existence of a family of curves which evolve according to the  equation \eqref{interface0} (that describes evolution of the sharp interface) and investigate their properties. The problem of mean curvature type motion was extensively studied by mathematicians from both PDE and geometry communities for several decades. A review of results on unconstrained motion by mean curvature can be found \cite{Bra78,GagHam86,Gra87}. Furthermore the viscosity solutions techniques have been efficiently applied in the PDE analysis of such problems. These techniques do not apply to mean curvature motion with volume preservation constraints \cite{Gag86,Che93,Ell97}, and the analysis becomes
    especially difficult in dimensions greater than two \cite{Esc98}. Note that existence in two dimensional mean curvature type motions were recently studied (e.g., \cite{Bon00,Che93, Ell97}) and appropriate techniques of regularization were developed. Recently, analogous issues resurfaced in the novel context of biological cell motility problems, after a phase-field model was introduced in \cite{Zie12}.

The problem studied in the present work is two dimensional (motion of a curve on the plane). The distinguished feature of this problem is that the velocity enters the evolution equation implicitly via a {\em non-linear} function $V-\Phi(V)$. Therefore the time derivative in the corresponding PDE that describes the signed distance, $u(\sigma,t)$ of the curve from a given reference curve also enters the PDE implicitly, which leads to challenges in establishing existence.

	The following outlines the basic steps of the analysis. 
	First, we consider smooth ($H^2$) initial data and generalize the regularization idea from \cite{Che93} (see also \cite{Ell97}) for the implicit dependence described above. Here, the main difficulty is to establish existence on a time interval that does not depend on the regularization parameter $\ve$. To this end, we derive $L^2$ (in time and space) a priori estimates independent of $\ve$ for third order derivatives and uniform in time $L^2$ estimates for  second order derivatives. These estimates allow us to show existence and to pass to the limit as $\ve\to 0$, and they are derived by considering the equation for $u_\sigma=\frac{\partial u}{\partial \sigma}$. We use ``nice'' properties of this equation to obtain higher order estimates independent of $\ve$ (which are not readily available for the equation for $u$).  In particular, it turns out that the equation for $u_\sigma$  can be written as a quasi linear parabolic PDE in divergence form.   For such equations   quite remarkable classical results  establish  Holder continuity of solutions for even for discontinuous initial data  \cite{Lad68}. This  provides a lower bound on the  possible blow up time, which does depend on $H^2$ norm of initial data for $u$ in our problem.
	
	 As a result, we establish existence on a time interval that depends on the $H^2$ norm of initial data.
	
	Second, observe that experiments for cell motility show that the cell shape is not necessarily smooth. Therefore one needs to consider more realistic models where smoothness assumptions on initial conditions are relaxed. 
	In particular,
	one should allow for corner-type singularities. To this end, we pass to generic $W^{1,\infty}$ initial curves. For the limiting equations for $u$ and $u_\sigma$, we show existence on a time interval that does not depend on the $H^2$ (and $H^1$ for $u_\sigma$) norms of initial conditions. This is necessary because these norms blow up for non-smooth functions from $W^{1,\infty}\setminus H^2$. The existence is proved by a maximum principle type argument, which is not available for the regularized equations that contain fourth order derivatives. Also it is crucial to establish H\"older continuity results for $u_\sigma$, rewriting the equation for $u_\sigma$ as a quasilinear divergence form parabolic PDE.  

	

  

  
  After proving short time existence we address the issues of global existence of such curves. The latter is important for the comparison of theoretical and experimental results on cell motility. We will present an exploratory study which combines analytical and numerical results for the long time behavior of the cell motility model. Analytically, we prove that similarly to the classical curvature driven motion with volume preservation, traveling waves with nonzero velocity do not exist. While through numerical experiments, we observe a nontrivial (transient) net motion resulting from the non-linearity and asymmetry of the initial shape. This observation shows an essential difference from the classical area preserving curvature driven motion.
  
	
Numerically solving \eqref{interface0} is a nontrivial task due to the non-linearity and non-locality in the formulation of the normal velocity. Classical methods such as level-set methods cannot be readily used here. 
We introduce an efficient algorithm which separates the difficulties of non-linearity and non-locality and resolves them independently through an iterative scheme. The accuracy of the algorithm is validated by numerical convergence. Our numerical experiments show that both non-linearity and asymmetry of the initial shape (in the form of non-convexity) can introduce a drift of the center of mass of the cell. Increasing the effects of the non-linearity or increasing the asymmetry results in an increase in the drift distance. 
  \textcolor{blue}{}

\section{Existence of solutions to the evolution equation (\ref{interface0})}
\label{sec:existence}

We study curves propagating via the evolution equation \eqref{interface0}.
The case $\Phi\equiv 0$ corresponds to well-studied volume preserving curvature motion (see, e.g.,  \cite{Che93, Ell97,Esc98,Gag86}).  
We emphasize that the presence of $\Phi(V)$ results in an implicit, non-linear and non-local equation for $V$, which leads to challenges from an analytical and numerical standpoint. \newline
The goal of this section is to prove the following:
	\begin{theorem}\label{thm1}
		Let $\Phi\in L^\infty(\mathbb{R})$ be a Lipschitz function satisfying
		\begin{equation}\label{phibound}
		\|\Phi'\|_{L^\infty(\mathbb{R})} < 1.
		\end{equation}
		Then, given $\Gamma_0\in W^{1,\infty}$, a closed and non self-intersecting curve on $\mathbb{R}^2$, there is a time $T=T(\Gamma_0)>0$ such that a family of curves $\Gamma(t)\in H^2$  exists for $t\in (0,T]$ which satisfies the evolution equation \eqref{interface0} with initial condition $\Gamma(0)=\Gamma_0$.
	\end{theorem}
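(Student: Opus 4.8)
First I would turn \eqref{interface0} into an equation for a scalar unknown. Fix a smooth embedded reference curve $\gamma_{\mathrm{ref}}$ chosen so that $\Gamma_0$, and curves near it, lie in a tubular neighborhood on which the normal--graph representation $\Gamma(t)=\{\,\gamma_{\mathrm{ref}}(\sigma)+u(\sigma,t)\,\nu_{\mathrm{ref}}(\sigma)\,\}$ is a diffeomorphism. In these coordinates the normal velocity is $V=Q(u,u_\sigma)\,u_t$ with $Q$ smooth and positive, the curvature $\kappa=\mathcal{K}(u,u_\sigma,u_{\sigma\sigma})$ is affine in $u_{\sigma\sigma}$ with strictly positive leading coefficient, and $|\Gamma(t)|$ is a smooth functional of $(u,u_\sigma)$. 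The hypothesis $\|\Phi'\|_{L^\infty}<1$ makes $V\mapsto V-\Phi(V)$ bi-Lipschitz, hence invertible with Lipschitz inverse $g$ satisfying $(1+\|\Phi'\|_\infty)^{-1}\le g'\le(1-\|\Phi'\|_\infty)^{-1}$; writing $\mu(t)$ for the average appearing in \eqref{interface0}, one has $V=g(\kappa-\mu)$, and feeding this back into the definition of $\mu$ yields the scalar constraint $\int_{\Gamma(t)} g(\kappa-\mu)\,ds'=0$. Since $\mu\mapsto\int_{\Gamma(t)} g(\kappa-\mu)\,ds'$ is continuous, strictly decreasing and surjective onto $\mathbb{R}$ (using that $g$ grows linearly and, for $u\in H^2$, $\kappa\in L^1$), this determines a unique $\mu=\mu[u]$ depending Lipschitz-continuously on $\kappa$, and $\Phi\in L^\infty$ keeps $\mu[u]$ bounded. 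Thus \eqref{interface0} becomes the closed, uniformly parabolic (fully nonlinear, through $g$) scalar PDE on the circle
\[
Q(u,u_\sigma)\,u_t \;=\; g\big(\mathcal K(u,u_\sigma,u_{\sigma\sigma})-\mu[u]\big),
\]
with $u(\cdot,0)=u_0$ encoding $\Gamma_0$ as a graph over $\gamma_{\mathrm{ref}}$.

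\textbf{Step 1: $H^2$ initial data via regularization.} For $u_0\in H^2$ I would adapt the regularization of \cite{Che93} (see also \cite{Ell97}) to the present implicit dependence: add a small fourth-order term $-\epsilon\,\partial_\sigma^4 u$ with $\epsilon>0$ an auxiliary parameter, which makes the problem strictly parabolic and non-degenerate despite $u_t$ appearing implicitly, so that standard fixed-point/semigroup theory yields a unique smooth solution $u^\epsilon$ on a maximal interval. The heart of the step is a priori estimates \emph{uniform in $\epsilon$} on a time interval whose length depends only on $\|u_0\|_{H^2}$. These are obtained by working with $w=u_\sigma$: differentiating the PDE in $\sigma$, one checks that the equation for $w$ is a quasilinear parabolic equation \emph{in divergence form}, $a(\cdot)\,w_t=\partial_\sigma(b(\cdot)\,w_\sigma)+(\text{lower order})-\epsilon\,\partial_\sigma^4 w$, with $a,b$ bounded above and below in terms of $\|u\|_{W^{1,\infty}}$. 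Energy testing then gives the $\epsilon$-independent bounds $u_{\sigma\sigma}\in L^\infty_t L^2_\sigma$ and $u_{\sigma\sigma\sigma}\in L^2_{t,\sigma}$ on $[0,T]$ with $T=T(\|u_0\|_{H^2})$, closed by a Gronwall argument. Aubin--Lions compactness then permits passing $\epsilon\to 0$ to obtain a solution $u$ with $u(t)\in H^2$ for $t\in(0,T]$ and $u(t)\to u_0$.

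\textbf{Step 2: $W^{1,\infty}$ initial data.} Given $\Gamma_0\in W^{1,\infty}$, approximate it by smooth curves $\Gamma_0^{(n)}$ with $u_0^{(n)}\to u_0$ in $C^0$ and $\sup_n\|u_0^{(n)}\|_{W^{1,\infty}}<\infty$, and apply Step 1 to get solutions $u^{(n)}$ on $[0,T_n]$ with $T_n=T(\|u_0^{(n)}\|_{H^2})$, which a priori shrinks to $0$. The additional ingredient needed is a time of existence, together with an a priori bound on $\|u^{(n)}\|_{W^{1,\infty}}$, depending only on $\|u_0\|_{W^{1,\infty}}$. This I would obtain by a maximum-principle argument: using the divergence-form structure of the $w$-equation, the monotonicity $g'>0$, and the boundedness of the nonlocal multiplier $\mu[u]$, one derives comparison principles for $u$ and for $u_\sigma$, yielding $\|u^{(n)}(t)\|_{W^{1,\infty}}\le C(\|u_0\|_{W^{1,\infty}})$ on a common interval $[0,T]$ with $T=T(\|u_0\|_{W^{1,\infty}})$; in particular the $\Gamma^{(n)}(t)$ stay graphs over $\gamma_{\mathrm{ref}}$ and remain embedded. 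With $\|u^{(n)}\|_{W^{1,\infty}}$ controlled, the De Giorgi--Nash--Moser / Ladyzhenskaya--Uraltseva theory \cite{Lad68} applied to the divergence-form $w$-equation gives H\"older bounds on any time slab $[\tau,T]$ with $\tau>0$ and, by bootstrapping, $n$-uniform $H^2$ bounds for $t\ge\tau$. Passing to the limit $n\to\infty$ produces the desired family $\Gamma(t)\in H^2$, $t\in(0,T]$, solving \eqref{interface0} with $\Gamma(0)=\Gamma_0$.

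\textbf{Main obstacle.} I expect the crux to be Step 2: producing a time of existence that does not see the $H^2$-norm of the data. This demands a genuine maximum principle for the \emph{implicit and nonlocal} equation — in particular control of $\|u_\sigma\|_{L^\infty}$ without touching $u_{\sigma\sigma}$ — which is precisely why one must recast the $u_\sigma$-equation in divergence form and handle the nonlocal multiplier $\mu[u]$ carefully enough that it does not destroy the comparison structure.
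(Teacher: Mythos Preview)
Your proposal is correct and follows essentially the same architecture as the paper's proof: the normal-graph parametrization over a smooth reference curve and the resolution of the implicit velocity via the Lipschitz inverse of $V\mapsto V-\Phi(V)$ (the paper's Lemma~\ref{lem1}); the fourth-order regularization $+\ve\,u_{\sigma\sigma\sigma\sigma}$ with $\ve$-uniform $L^\infty_t H^2_\sigma$ and $L^2_t H^3_\sigma$ estimates obtained by differentiating in $\sigma$ and energy-testing (Steps~2--3 and Lemma~\ref{aprioriestH2uniform}); and, for $W^{1,\infty}$ data, the maximum-principle bounds on $u$ and $u_\sigma$ applied to the \emph{limit} equation (Lemma~\ref{Lemma_various_uniformbounds}), followed by H\"older continuity of $u_\sigma$ from the divergence-form structure via \cite{Lad68} (Lemma~\ref{Holder_continuity}) and a localized Gr\"onwall argument to bootstrap back to $H^2$ control (Lemma~\ref{LemmaProGlobalGronwal}). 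The only minor discrepancy is that the paper does not use the divergence-form structure for the $\ve$-uniform estimates in Step~3---there it multiplies the differentiated equation by $u_{\sigma\sigma\sigma}$ directly and invokes an interpolation inequality (Lemma~\ref{interplemma})---reserving the divergence form for the H\"older step in Step~4; but this is a matter of implementation, not strategy.
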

	
	\begin{remark}
		The classes $W^{1,\infty}$ and $H^2$ above refer to curves which are parametrized by mappings from Sobolev spaces $W^{1,\infty}$ and $H^2$ correspondingly.
	\end{remark}
	
	\begin{remark}
		After time $T$ the curve could self-intersect or blow-up in the parametrization map (e.g., a cusp) could occur.
	\end{remark}

		We first prove the existence for smooth ($H^2$) initial data. The main effort is to pass to non-smooth initial conditions (e.g., initial curves with corners), see discussion in Section \ref{sec:overview}.

	
	
%

\noindent{Proof of Theorem \ref{thm1}:} The proof of Theorem \ref{thm1} is split into 4 steps. In Step 1 we present a PDE formulation of the evolution problem \eqref{interface0} and introduce its regularization by adding a higher order term with the small parameter $\ve>0$. 
In Step 2 we prove a local in time existence result for the regularized problem.  In Step 3  we establish a uniform time interval of existence for solutions of the regularized problem via a priori estimates. These estimates allow 
us to pass to the limit $\ve\to 0$, which leads to
existence for \eqref{interface0} for smooth initial data.
Finally, Step 4 is devoted to the transition from $H^2$-smooth initial data to $W^{1,\infty}$ ones. A crucial role here plays derivation of $L^\infty$ bounds for the solution and its first derivative independent of $H^2$ norm of the initial data.  

	
	\noindent {\bf Step 1.} {\em Parametrization and PDE forms of \eqref{interface0}}. 
	
Let $\tilde\Gamma_0$ be a $C^4$ smooth reference curve in a small neighborhood of $\Gamma_0$ and   
let $\tilde\Gamma_0$ be parametrized by arc length parameter $\sigma\in I$.  Let $\kappa_0(\sigma)$ be the signed curvature of $\tilde\Gamma_0$ and $\nu(\sigma)$ be the the inward pointing normal vector to $\tilde\Gamma_0$. Consider the tubular 
neighborhood
\begin{equation*}\label{existball}
U_{\delta_0} := \{x \in \mathbb{R}^2 \mid {\rm dist}(x,\tilde{\Gamma}_0)<2\delta_0\}.
\end{equation*}

	\begin{figure}[h]
	\includegraphics[width = .8\textwidth]{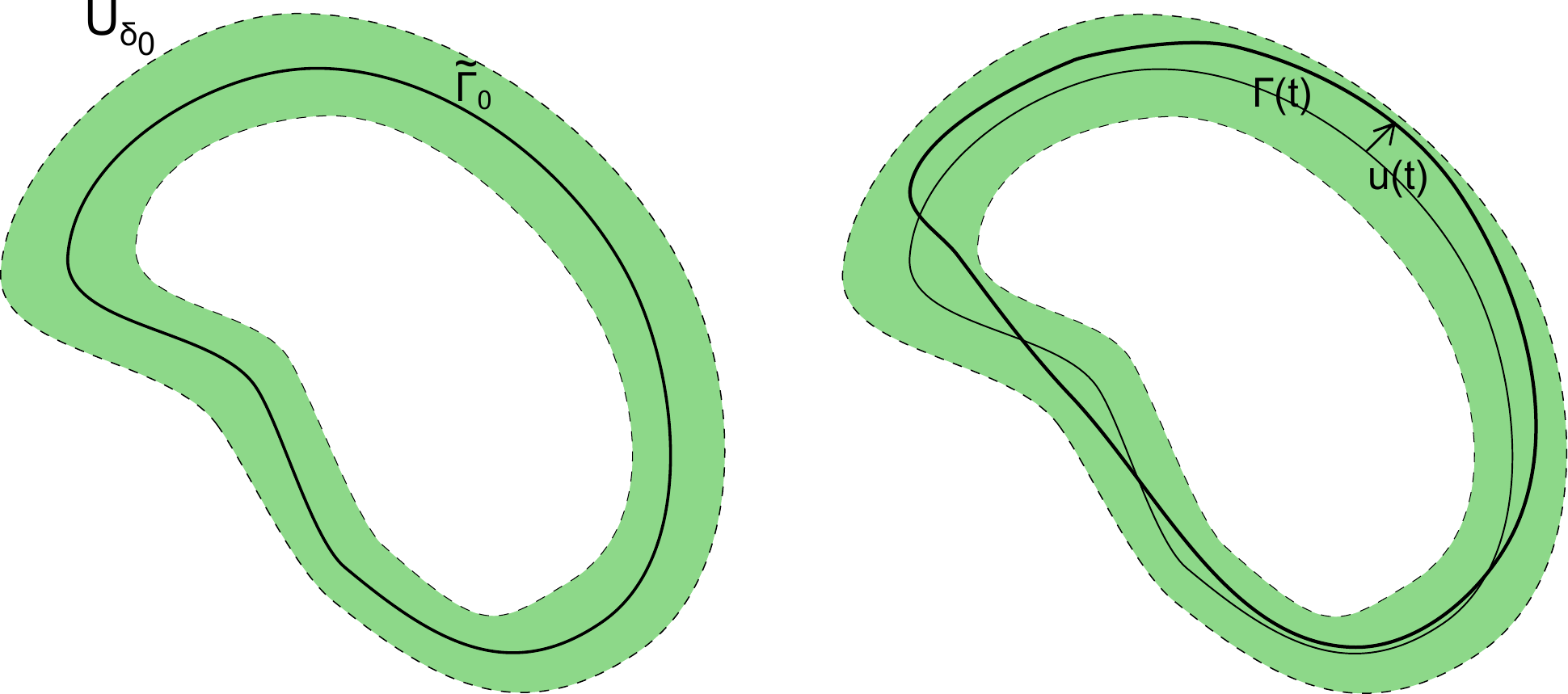}
	\caption{Visualization of $\mathbb{B}_{\delta_0}$ and the relation between $d(t)$ and $\Gamma(t)$}
	\label{fig:existenceball}
	\end{figure}
One can choose $\tilde \Gamma_0$  and  $\delta_0$ 
so that the map
\[
Y\colon I\times (-2\delta_0,2\delta_0) \rightarrow U_{\delta_0}, \; Y(\sigma,u) := \tilde\Gamma_0(\sigma)+u \nu(\sigma)
\]
is a $C^2$ diffeomorphism between $I\times (-2\delta_0,2\delta_0)$ and its image, and  $\Gamma_0\subset Y(I\times (-2\delta_0,2\delta_0))$  . 
Then $\Gamma_0$ can be parametrized by $\Gamma_0=\tilde \Gamma_0(\sigma)+u_0(\sigma)\nu(\sigma)$,
$\sigma\in I$, for some periodic function $u_0(\sigma)$. 
Finally, we can assume that $\delta_0$ is sufficiently small so that 
\begin{equation}\label{smalldelta}
\delta_0\|\kappa_0\|_{L^\infty} < 1,
\end{equation}
where $\kappa_0$ denotes the curvature of $\tilde \Gamma$. 

A continuous function  $u: I\times [0,T]\rightarrow [-\delta_0,\delta_0]$, periodic in the $\sigma$ variable, describes a family of closed curves via the mapping
\begin{equation}\label{equivalenceofD}
\Gamma(\sigma,t) =\tilde \Gamma_0(\sigma)+u(\sigma,t)\nu(\sigma).
\end{equation}
That is, there is a well-defined correspondence between $\Gamma(\sigma,t)$ and $u(\sigma,t)$.



Recall the Frenet-Serre formulas applied to $\tilde\Gamma_0$:
\begin{equation}\label{dtau}
\frac{d\tau}{d\sigma}(\sigma)= \kappa_0(\sigma) \nu(\sigma)
\end{equation}
\begin{equation}\label{dn}
\frac{d\nu}{d\sigma}(\sigma) =-\kappa_0(\sigma)\tau(\sigma)
\end{equation}
where $\tau$ is the unit tangent vector. Using \eqref{equivalenceofD}-\eqref{dn} we express the normal velocity 
$V$ of $\Gamma$ as
\begin{equation}\label{veqn}
V
=  \frac{1-u\kappa_0}{S}u_t
\end{equation}
where $S=S(u)= \sqrt{u^2_\sigma + (1-u\kappa_0)^2}$. 
Also, in terms of $u$, curvature of $\Gamma$ is given by 
\begin{equation}\label{curvature}
\kappa(u) = 
\frac{1}{S^3} \Bigl((1-u \kappa_0) u_{\sigma\sigma} + 2\kappa_0 u_\sigma^2+(\kappa_0)_\sigma u_\sigma u  + 
\kappa_0(1-u \kappa_0)^2\Bigr).
\end{equation}
Note that if $u$ is sufficiently smooth \cite{Doc76} one has
$$
\int_I \kappa(u) Sd\sigma=\int_{\Gamma} \kappa ds =2\pi,
$$
in particular this holds for every $u\in H^2_{per}(I)$ such that $|u|\leq \delta_0$ on $I$. 
Hereafter, $H_{per}^k(I)$  denote the Sobolev spaces of 
periodic functions on $I$ with square-integrable 
derivatives up to the $k$-th order and $W^{1,\infty}_{per}(I)$ denotes the space of periodic functions with the first derivative 
in $L^\infty(I)$.      



 Combining \eqref{veqn} and \eqref{curvature}, we rewrite \eqref{interface0} as the following PDE for $u$:
 \begin{equation}\label{deqn}
 \begin{aligned}
u_t -\frac{S}{1-u \kappa_0}\Phi\left(\frac{1-u\kappa_0}{S}u_t\right)
 &= \frac{S}{1-u \kappa_0}\kappa(u)\\
&-\frac{S}{(1-u \kappa_0)L[u]}\Bigl( \int_I \Phi\left(\frac{1-u \kappa_0}{S} u_t\right)Sd\sigma+2\pi\Bigr),
\end{aligned}
\end{equation}
where 
$$
L[u]=\int_I S(u)d\sigma
$$
is the total length of the curve.

The initial condition $u(\sigma,0)=u_0(\sigma)$ corresponds to the initial profile $\Gamma(0)=\Gamma_0$. Since 
(\ref{deqn}) is not resolved with respect to the time derivative $u_t$, it is natural to resolve \eqref{interface0} with respect to $V$
to convert (\ref{deqn}) into a parabolic type PDE where the time derivative $u_t$ is expressed as a function of $u$, $u_\sigma$, $u_{\sigma\sigma}$. 
The following lemma shows how to rewrite \eqref{deqn} in such a form. This is done by resolving \eqref{interface0} with respect to $V$ to get a Lipschitz  continuous resolving  map, provided that $\Phi \in L^\infty(\mathbb{R})$ and $\|\Phi^\prime\|_{L^\infty(\mathbb{R})}<1$. 

It is convenient to rewrite in the form \eqref{interface0}
\begin{equation}\label{lameq1}
V = \kappa(u)+ \Phi(V) -\lambda,\quad \int_I V S(u)d\sigma=0,  
\end{equation}
where both normal velocity $V$  and constant $\lambda$  are considered as unknowns.

\begin{lemma}
\label{lem1} Suppose that $\Phi\in L^\infty(\mathbb{R})$ and $\|\Phi' \|_{L^\infty(\mathbb{R})}<1$. Then for any $u (\sigma)~\in H^2_{per}(I)$, such that $|u(\sigma)|\leq \delta_0$, 
there exists a unique  solution $(V(\sigma), \lambda)\in L^2(I)\times \mathbb{R}$  of \eqref{lameq1}.  
Moreover, the resolving map $\mathcal{F}$ assigning to a given  $u\in H^2_{per}(I)\cap \{u; \,|u|\leq\delta_0\}$ the solution 
$V=\mathcal{F}(u)\in L^2(I)$ of \eqref{lameq1} is locally Lipschitz continuous. 
\end{lemma}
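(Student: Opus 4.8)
The plan is to reduce the coupled system \eqref{lameq1} to a single scalar equation for the multiplier $\lambda$ by inverting the nonlinearity in $V$. Assumption \eqref{phibound} makes $g(V):=V-\Phi(V)$ a strictly increasing bi-Lipschitz map of $\mathbb{R}$ onto $\mathbb{R}$, namely $(1-\|\Phi'\|_{L^\infty})(V_1-V_2)\le g(V_1)-g(V_2)\le(1+\|\Phi'\|_{L^\infty})(V_1-V_2)$ for $V_1\ge V_2$; hence $g^{-1}$ is well defined on all of $\mathbb{R}$, Lipschitz with constant $(1-\|\Phi'\|_{L^\infty})^{-1}$, and satisfies the reverse bound $|g^{-1}(a)-g^{-1}(b)|\ge(1+\|\Phi'\|_{L^\infty})^{-1}|a-b|$. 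Writing the first relation in \eqref{lameq1} as $g(V)=\kappa(u)-\lambda$, the system becomes equivalent to
\begin{equation*}
V=g^{-1}\!\bigl(\kappa(u)-\lambda\bigr),\qquad h(\lambda):=\int_I g^{-1}\!\bigl(\kappa(u)-\lambda\bigr)\,S(u)\,d\sigma=0 .
\end{equation*}
Since $H^2_{per}(I)\hookrightarrow C^1(I)$, formula \eqref{curvature} gives $\kappa(u)\in L^2(I)$, while \eqref{smalldelta} gives $S(u)\ge 1-\delta_0\|\kappa_0\|_{L^\infty}=:c_0>0$ with $S(u)$ also bounded above; in particular $g^{-1}(\kappa(u)-\lambda)-g^{-1}(-\lambda)\in L^2(I)$, so $V\in L^2(I)$ once $\lambda$ is fixed.

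Next I would settle existence and uniqueness of $\lambda$. The Lipschitz bound on $g^{-1}$ together with $S(u)\in L^\infty(I)$ makes $h$ Lipschitz in $\lambda$; the reverse bound gives, for $\lambda_1>\lambda_2$, $h(\lambda_2)-h(\lambda_1)\ge(1+\|\Phi'\|_{L^\infty})^{-1}(\lambda_1-\lambda_2)\,L[u]$, where $L[u]=\int_I S(u)\,d\sigma\ge c_0|I|>0$, so $h$ is strictly decreasing with a slope bounded below uniformly in $u$. Since $g^{-1}(\kappa(u(\sigma))-\lambda)\to\mp\infty$ as $\lambda\to\pm\infty$, the Lipschitz control of $g^{-1}$ around $g^{-1}(-\lambda)$ shows $h(\lambda)\to\mp\infty$; the intermediate value theorem then produces a unique root $\lambda=\lambda(u)$, hence a unique $(V,\lambda)$. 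Testing $g(V)=\kappa(u)-\lambda$ against $S(u)$ and using $\int_I\kappa(u)S(u)\,d\sigma=2\pi$ together with the constraint $\int_I VS(u)\,d\sigma=0$ gives $\lambda(u)L[u]=2\pi+\int_I\Phi(V)S(u)\,d\sigma$, so $|\lambda(u)|\le 2\pi/(c_0|I|)+\|\Phi\|_{L^\infty}$ uniformly in $u$, and consequently $\|V\|_{L^2(I)}\le C\bigl(1+\|\kappa(u)\|_{L^2(I)}\bigr)$.

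For local Lipschitz continuity of $\mathcal{F}$, fix an $H^2$-bounded set $B\subset\{u\in H^2_{per}(I):|u|\le\delta_0\}$ and take $u_1,u_2\in B$, with $V_i=\mathcal{F}(u_i)$, $\lambda_i=\lambda(u_i)$. The pointwise bound $|V_1-V_2|\le(1-\|\Phi'\|_{L^\infty})^{-1}\bigl(|\kappa(u_1)-\kappa(u_2)|+|\lambda_1-\lambda_2|\bigr)$ reduces matters to (i) $\|\kappa(u_1)-\kappa(u_2)\|_{L^2(I)}\le C\|u_1-u_2\|_{H^2(I)}$ and (ii) $|\lambda_1-\lambda_2|\le C\|u_1-u_2\|_{H^2(I)}$. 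Estimate (i) follows by expanding \eqref{curvature} term by term: the only factor containing $u_{\sigma\sigma}$ enters linearly, and $1/S^3$, $u$, $u_\sigma$, $\kappa_0$, $(\kappa_0)_\sigma$ are bounded on $B$ with $S\ge c_0$, so the difference is controlled by $\|u_1-u_2\|_{C^1(I)}$ and $\|u_{1,\sigma\sigma}-u_{2,\sigma\sigma}\|_{L^2(I)}$, both $\le C\|u_1-u_2\|_{H^2(I)}$; similarly $\|S(u_1)-S(u_2)\|_{L^2(I)}\le C\|u_1-u_2\|_{H^1(I)}$. For (ii), use the uniform slope of $h$: with $H_1(\lambda):=\int_I g^{-1}(\kappa(u_1)-\lambda)S(u_1)\,d\sigma$ one has $H_1(\lambda_1)=0$, hence $|\lambda_1-\lambda_2|\le(1+\|\Phi'\|_{L^\infty})(c_0|I|)^{-1}|H_1(\lambda_2)|$; subtracting the identity $\int_I g^{-1}(\kappa(u_2)-\lambda_2)S(u_2)\,d\sigma=0$ and splitting into the $g^{-1}$-difference and the $S$-difference gives $|H_1(\lambda_2)|\le C\bigl(\|\kappa(u_1)-\kappa(u_2)\|_{L^2(I)}+\|V_2\|_{L^2(I)}\|S(u_1)-S(u_2)\|_{L^2(I)}\bigr)\le C\|u_1-u_2\|_{H^2(I)}$, using (i) and the uniform bound $\|V_2\|_{L^2(I)}\le C$ established above. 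Combining, $\|V_1-V_2\|_{L^2(I)}\le C\|u_1-u_2\|_{H^2(I)}$, which is the asserted local Lipschitz continuity.

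The step I expect to be the main obstacle is controlling the implicit dependence $\lambda=\lambda(u)$: since $\lambda$ is characterized only through the scalar constraint, one cannot estimate $\lambda_1-\lambda_2$ by a direct computation, and the key is the quantitative strict monotonicity of $h$ with a slope bounded below uniformly in $u$. That bound is precisely where the subcritical hypothesis $\|\Phi'\|_{L^\infty}<1$ enters, together with the geometric lower bound $L[u]\ge c_0|I|$ coming from \eqref{smalldelta}; the curvature estimate (i) is longer but entirely routine.
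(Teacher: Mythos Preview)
Your proof is correct and shares the paper's core strategy: invert $V-\Phi(V)$ to write $V=\Psi(\kappa(u)-\lambda)$ with $\Psi=g^{-1}$, then use strict monotonicity in $\lambda$ together with $\Psi(\kappa-\lambda)\to\mp\infty$ to produce a unique multiplier satisfying the constraint. The one genuine difference lies in the Lipschitz estimate. The paper factors $\mathcal{F}$ as $u\mapsto(\kappa,S)\mapsto V$ and handles the second arrow by an energy argument: testing the difference of the equations against $S(V_1-V_2)$ (for the $\kappa$-dependence) and against $S_1(V_1-V_2)+V_2(S_1-S_2)$ (for the $S$-dependence) makes the $(\lambda_1-\lambda_2)$ contribution vanish identically, since $\int V_iS_i\,d\sigma=0$; thus $\lambda$ is never estimated at all. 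You instead bound $\lambda_1-\lambda_2$ explicitly via the uniform lower slope of $h$ and then invoke the pointwise Lipschitz bound on $g^{-1}$. Both routes are short and yield the same conclusion; the paper's testing trick is a bit slicker in bypassing $\lambda$, while your approach makes the quantitative role of $\|\Phi'\|_{L^\infty}<1$ and of the geometric lower bound $L[u]\ge c_0|I|$ more explicit.
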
	
\begin{proof} 
Let $J:=\|\Phi' \|_{L^\infty(\mathbb{R})}$. Fix $\kappa\in C_{per}(I)$, a positive function $S\in C_{per}(I)$ and $\lambda\in \mathbb{R}$, and consider the equation
\begin{equation}\label{lameq}
	V = \kappa+ \Phi(V) -\lambda.
\end{equation}
It is immediate that the unique solution of \eqref{lameq} is given by $V=\Psi(\kappa-\lambda)$, 
where $\Psi$ is the inverse map to $V-\Phi(V)$. Note that $\frac{1}{1+J}\leq \Psi^\prime\leq \frac{1}{1-J}$, 
therefore $\Psi$ is strictly increasing function and $\Psi(\kappa-\lambda)\to\pm\infty$ uniformly as $\lambda\to\mp \infty$. 
It follows that there exists a unique $\lambda\in \mathbb{R}$ such that $V=\Psi(\kappa-\lambda)$ is a solution 
of \eqref{lameq} satisfying 
\begin{equation}
\int_I V S d\sigma=0.
\label{DopCondLambda}
\end{equation}
Next we establish the Lipschitz continuity of the resolving map $(\kappa,S)\mapsto V\in L^2(I)$ 
as a function of $\kappa\in L^2(I)$ 
and $S\in L^\infty_{per}(I)$, $S\geq 1-\delta_0\|\kappa_0\|_{L^\infty(I)}>0$ (cf. \eqref{smalldelta}),
still assuming that $\kappa,S\in C_{per}(I)$.  
Multiply \eqref{lameq} by $VS$ and integrate over $I$ to find with the help of the Cauchy-Schwarz inequality,
\begin{equation*}
\begin{aligned}
\int_I V^2 Sd\sigma 
&=  \int_I \kappa V Sd\sigma + \int_I \Phi(V) V Sd\sigma \\
&\leq \|S\|_{L^\infty(I)}\bigl(\|\kappa\|_{L^2(I)}+(|I|\|\Phi\|_{L^\infty(\mathbb{R})})^{1/2}\bigr)\Bigl( \int_I V^2 Sd\sigma\Bigr)^{1/2}.
\end{aligned}
\end{equation*}
Recalling that $S \geq \omega:= 1-\delta_0\|\kappa_0\|_{L^\infty(I)}>0$, we then obtain
\begin{equation}\label{Vbounded}
\|V\|_{L^2(I)} \leq \frac{\|S\|_{L^\infty(I)}}{\omega^{1/2}}\bigl( \|\kappa\|_{L^2(I)}+(|I|\|\Phi\|_{L^\infty(\mathbb{R})})^{1/2} \bigr).
\end{equation}
To see that $\kappa\mapsto V$ (for fixed $S$) is Lipschitz continuous, consider solutions $V_1$, $V_2$ of \eqref{lameq}-\eqref{DopCondLambda}
with $\kappa=\kappa_1$ and $\kappa=\kappa_2$. Subtract the equation for $V_2$ from that for $V_1$ multiply by $S(V_1-V_2)$
and integrate over $I$,
$$
\int_I \bigl( 
(V_1-V_2)^2-(\Phi(V_1)-\Phi(V_2))(V_1-V_2)\bigr) Sd\sigma =  \int_I (\kappa_1-\kappa_2) (V_1-V_2) Sd\sigma.
$$  
Then, since $|\Phi(V_1)-\Phi(V_2)|\leq J |V_1-V_2|$ we derive that 
$$
\|V_1-V_2\|_{L^2(I)}\leq 
\frac{\|S\|_{L^\infty(I)}}{\omega^{1/2}(1-J)} \|\kappa_1-\kappa_2\|_{L^2(I)}.
$$
Next consider solutions of \eqref{lameq}-\eqref{DopCondLambda}, still denoted  $V_1$  and $V_2$, which correspond now
$S=S_1$ and $S=S_2$ with the same $\kappa$.  Subtract the equation for $V_2$ from that for $V_1$
multiply by $S_1(V_1-V_2)+V_2(S_1-S_2)$ and  integrate over $I$ to find 
$$
\begin{aligned}
\int_I \bigl( 
(V_1-V_2)^2-&(\Phi(V_1)-\Phi(V_2))(V_1-V_2)\bigr) S_1 d\sigma \\&= 
 \int_I V_2(S_1-S_2)\bigl(\Phi(V_1)-\Phi(V_2) -(V_1-V_2)\Bigr) d\sigma.
\end{aligned}
$$  
Then applying the Cauchy-Schwarz inequality we derive
\begin{equation*}
 \|V_1-V_2\|_{L^2(I)}\leq \frac{1+J}{\omega (1-J)}
\|V_2\|_{L^2(I)}\|S_1-S_2\|_{L^\infty(I)}.
\end{equation*}
Thanks to \eqref{Vbounded} this completes the proof of local Lipschitz continuity of the resolving 
map on the dense subset (of continuous functions) in 
$$\Theta=\Bigl\{(\kappa,S)\in L^2(I)\times \{S\in L^\infty_{per}(I); S\geq \omega\}\Bigr\}$$ 
and thus on the whole set $\Theta$. It remains to note that the map $u\mapsto (\kappa(u), S(u))$ 
is locally Lipschitz on $\{u\in H^2_{per}(I);\, |u|\leq \delta_0\ \text{on}\ I\}$, which 
completes the proof of the Lemma.     
%
%
%
%
\end{proof}
\begin{remark}The parameter $\lambda\in \mathbb{R}$ with the property that the solution $V$ of \eqref{lameq} satisfies $\int_I V S d\sigma =0$ is easily seen to be
	\begin{equation}
	\lambda = \frac{1}{L[u]} \int_I (\kappa(u)+\Phi(V)S d\sigma= \frac{\pi}{L[u]}+\frac{1}{L[u]}\int_I \Phi(V) S d\sigma.
	\end{equation}
\end{remark}
\begin{remark} Under conditions of Lemma \ref{lem1}, if $\kappa(u)\in H_{per}^1(I)$ ($u\in H^3_{per}(I)$) then it holds that $V=\mathcal{F}(u)\in H_{per}^1(I)$.
\end{remark}


Equation \eqref{interface0} is equivalently rewritten in terms of the resolving operator $\mathcal{F}$ as
\begin{equation}\label{deqn1}
u_t = \frac{S(u)} {1- u \kappa_0}\mathcal{F}(u),\ \text{or}\  u_t = \mathcal{\tilde F}(u),
\end{equation}
where $\mathcal{\tilde F}(u):=S(u)\mathcal{F}(u)/(1- u \kappa_0)$.


\noindent {\bf Step 2.} {\em Introduction and analysis of regularized PDE.}

We now introduce a small parameter regularization term to $\eqref{deqn}$ which allows us to apply standard existence results.
To this end, let $u^\ve=u^\ve(\sigma,t)$ solve the following regularization of equation \eqref{deqn1}
for $0<\ve\leq1$, 
\begin{equation}\label{evoMotion}
u^\ve_t + 
\epsilon u^\ve_{\sigma\sigma\sigma\sigma} = 
\mathcal{\tilde F}(u^\ve),
\end{equation}
with 
 $u^\ve(\sigma,0)=u_0$. Define
\begin{equation*}\label{regVelocity}
V^\ve := \frac{1-u^\ve\kappa_0}{S^\ve}\left(u_t^\ve + \ve u_{\sigma\sigma\sigma\sigma}^\ve\right),
\end{equation*}
where $S^\ve=S(u^\ve)$.
Since $V^\ve = \mathcal{F}(u^\ve)$, then by definition of the resolving map $\mathcal{F}$ we have that $u^\ve$ satisfies the following equation:
\begin{equation}\label{eqnfordve}
\begin{aligned}
u_t^\ve +\ve u_{\sigma\sigma\sigma\sigma}^\ve -
\frac{S^\ve}{1-u \kappa_0}\Phi(V^\ve)
 &= \frac{S^\ve}{1-u^\ve \kappa_0}\kappa(u^\ve)\\
&-\frac{S^\ve}{(1-u^\ve \kappa_0)L[u^\ve]}
\Bigl( \int_I \Phi(V^\ve)S^\ve d\sigma+2\pi\Bigr).
\end{aligned}
\end{equation}

Hereafter we consider $H^2_{per}(I)$ equipped with the norm 
\begin{equation*}
\|u\|^2_{H^2(I)}:= \|u\|_{L^2(I)}^2+\|u_{\sigma\sigma}\|_{L^2(I)}^2.
\end{equation*}
\begin{proposition}\label{prop1}
Let $\Gamma_0$ and $\Phi(\cdot)$ satisfy the conditions of Theorem \ref{thm1}. 
Assume that $u_0\in H^2_{per}$ and $\max|u_0|<\delta_0$. 
Then there exists a non-empty interval $[0,T^\ve]$ such that a 
solution $u^\ve$ of \eqref{eqnfordve} with initial data 
$u^\ve(\eta,0)=u_0(\eta)$
exists and 
\begin{equation}
\label{regularity}
\begin{aligned}
 u^\ve \in L^2(0,T^\ve; H^4_{per}(I)) &\cap H^1(0,T^\ve;L^2(I))
\cap L^\infty(0,T^\ve; H^2_{per})\\ &\text{and}\ \sup_{t\in[0,T]} \|u^\ve(t)\|_{L^\infty(I)}\leq \delta_0.
\end{aligned}
\end{equation}
Furthermore, 
this solution can be extended on a bigger time interval $[0,T^\ve+\Delta_t]$
so long as $\delta_0-\max u(\sigma,T^\ve)\geq \alpha$   and $\|u(T^\ve)\|_{H^2(I)}\leq M$
for some $\alpha>0$ and $M<\infty$, where $\Delta_t$ depend on $\ve$, $\alpha$ and $M$, $\Delta_t:=\Delta_t(\alpha, M,\ve)>0$.
\end{proposition}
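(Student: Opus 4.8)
The plan is to view the regularized equation \eqref{evoMotion} (equivalently \eqref{eqnfordve}) as the semilinear parabolic Cauchy problem $u^\ve_t + A_\ve u^\ve = \tilde{\mathcal F}(u^\ve)$, $u^\ve(0)=u_0$, where $A_\ve := \ve\,\partial_\sigma^4$ with domain $H^4_{per}(I)$ is self-adjoint and nonnegative on $L^2_{per}(I)$ (its kernel being the constants) and generates a bounded analytic semigroup $e^{-tA_\ve}$ satisfying the smoothing bound $\|e^{-tA_\ve}f\|_{H^2_{per}(I)} \le C\,(1+(\ve t)^{-1/2})\,\|f\|_{L^2(I)}$ (a one-line Fourier computation: the multiplier on mode $k$ is $e^{-t\ve k^4}$ and $\sup_k(1+k^4)e^{-2t\ve k^4}\le C(1+(\ve t)^{-1})$). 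The nonlinearity is handled by Lemma \ref{lem1}: in one dimension $H^2_{per}(I)\hookrightarrow C^1_{per}(I)$, so for $|u|\le\delta_0$ one has $1-u\kappa_0\ge\omega>0$ and $S(u)\in C_{per}(I)$, hence the multiplier $m(u):=S(u)/(1-u\kappa_0)$ is bounded and locally Lipschitz from $\{u\in H^2_{per}(I)\colon|u|\le\delta_0\}$ into $L^\infty(I)$; combined with the boundedness \eqref{Vbounded} and the local Lipschitz continuity of the resolving map $\mathcal F$ from Lemma \ref{lem1}, the map $\tilde{\mathcal F}=m(\cdot)\,\mathcal F(\cdot)$ is bounded and locally Lipschitz from $\{u\in H^2_{per}(I)\colon|u|\le\delta_0,\ \|u\|_{H^2}\le R\}$ into $L^2(I)$, with constants depending only on $R$ and $\delta_0$ (through $\omega$, cf.\ \eqref{smalldelta}).

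Next I would run a contraction argument for the Duhamel map $\mathcal T(u)(t):=e^{-tA_\ve}u_0+\int_0^t e^{-(t-s)A_\ve}\tilde{\mathcal F}(u(s))\,ds$. Fix $\delta_1\in(\max|u_0|,\delta_0)$ and $R>\|u_0\|_{H^2}$; since $e^{-tA_\ve}u_0\to u_0$ in $H^2_{per}(I)$ (hence in $L^\infty(I)$) as $t\to0$, pick $T^\ve>0$ with $\|e^{-tA_\ve}u_0\|_{L^\infty}\le\delta_1$ and $\|e^{-tA_\ve}u_0\|_{H^2}\le R/2$ on $[0,T^\ve]$. On the complete metric space $\mathcal X_{T^\ve}:=\{u\in C([0,T^\ve];H^2_{per}(I))\colon u(0)=u_0,\ \|u\|_{C([0,T^\ve];H^2)}\le R,\ \|u(t)\|_{L^\infty}\le\delta_0\ \forall t\}$ the smoothing bound gives $\|\int_0^t e^{-(t-s)A_\ve}\tilde{\mathcal F}(u(s))\,ds\|_{H^2}\le C(R)\,(T^\ve+2\ve^{-1/2}\sqrt{T^\ve})$, which is $\le R/2$ (and, using $H^2\hookrightarrow L^\infty$, $\le\delta_0-\delta_1$ in $L^\infty$) after shrinking $T^\ve$; the Lipschitz bound on $\tilde{\mathcal F}$ makes $\mathcal T$ a contraction after a further shrinking. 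Hence for some $T^\ve=T^\ve(\ve,\|u_0\|_{H^2},\delta_0-\max|u_0|)>0$ there is a unique mild solution $u^\ve\in C([0,T^\ve];H^2_{per}(I))$ with $\|u^\ve(t)\|_{L^\infty}\le\delta_1<\delta_0$. (Equivalently one may use a Faedo--Galerkin scheme in the Fourier basis, after cutting $\tilde{\mathcal F}$ off outside $\{|u|\le\delta_1,\ \|u\|_{H^2}\le R\}$ to make the truncated system globally solvable; the estimates below show the cutoff is inactive on $[0,T^\ve]$.)

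To obtain the regularity \eqref{regularity} I would note that $t\mapsto\tilde{\mathcal F}(u^\ve(t))\in C([0,T^\ve];L^2(I))\subset L^2(0,T^\ve;L^2(I))$ while $u_0\in H^2_{per}(I)=D(A_\ve^{1/2})$, so maximal $L^2$-regularity for the self-adjoint generator $A_\ve$ on the Hilbert space $L^2_{per}(I)$ (Lions; or a direct Fourier computation) yields $u^\ve\in H^1(0,T^\ve;L^2(I))\cap L^2(0,T^\ve;H^4_{per}(I))\cap C([0,T^\ve];H^2_{per}(I))$, i.e.\ exactly \eqref{regularity}, and $u^\ve$ is a strong solution of \eqref{eqnfordve}. (A self-contained route to the same bounds is the energy identity from pairing \eqref{evoMotion} with $u^\ve_{\sigma\sigma\sigma\sigma}$, $\tfrac12\tfrac{d}{dt}\|u^\ve_{\sigma\sigma}\|_{L^2}^2+\ve\|u^\ve_{\sigma\sigma\sigma\sigma}\|_{L^2}^2\le\tfrac{\ve}{2}\|u^\ve_{\sigma\sigma\sigma\sigma}\|_{L^2}^2+\tfrac1{2\ve}\|\tilde{\mathcal F}(u^\ve)\|_{L^2}^2$, together with the $L^2$ identity and the bound on $\|\tilde{\mathcal F}(u^\ve)\|_{L^2}$; the $L^\infty$ constraint is then recovered from $\|u^\ve(t)-u_0\|_{L^\infty}\le C\|u^\ve(t)-u_0\|_{H^1}^{1/2}\|u^\ve(t)-u_0\|_{L^2}^{1/2}\to0$.) Finally, the continuation clause follows by re-running the contraction argument with initial datum $u^\ve(T^\ve)$: if $\delta_0-\max u^\ve(\cdot,T^\ve)\ge\alpha$ and $\|u^\ve(T^\ve)\|_{H^2}\le M$, the interval length produced depends only on $\ve$ (smoothing constants), $M$ (admissible radius and the bound on $\tilde{\mathcal F}$) and $\alpha$ (room to $\pm\delta_0$ in $L^\infty$), so $\Delta_t=\Delta_t(\alpha,M,\ve)>0$; concatenating the two strong solutions (their $H^2$-traces match at $t=T^\ve$) gives a solution on $[0,T^\ve+\Delta_t]$ with the regularity \eqref{regularity}.

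The main obstacle is the \emph{local} and \emph{implicit} nature of $\tilde{\mathcal F}$: it is defined and Lipschitz only on the slab $\{|u|\le\delta_0\}\subset H^2_{per}(I)$, and only through the resolving operator of Lemma \ref{lem1}, so one must propagate the strict pointwise constraint $\|u^\ve(t)\|_{L^\infty}<\delta_0$ along the iteration — this is where the strict inequality $\max|u_0|<\delta_0$, the embedding $H^2_{per}(I)\hookrightarrow C^1_{per}(I)$, and the continuity $e^{-tA_\ve}u_0\to u_0$ as $t\to0$ are used crucially. Everything else is standard semilinear-parabolic theory, and the dependence of $T^\ve$ on $\ve$ is expected here; removing it is the content of Step 3 of the proof of Theorem \ref{thm1}, not of this proposition.
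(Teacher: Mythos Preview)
Your proposal is correct and follows essentially the same strategy as the paper: a contraction/fixed-point argument in $L^\infty(0,T^\ve;H^2_{per}(I))$ on the constrained set $\{|u|\le\delta_0,\ \|u\|_{H^2}\le M\}$, using the local Lipschitz continuity of $\tilde{\mathcal F}:H^2_{per}\to L^2$ from Lemma~\ref{lem1}, and recovering the pointwise constraint $|u^\ve|\le\delta_0$ via interpolation/continuity in time. The only difference is in the technical packaging: the paper defines $\mathcal T(\tilde u)$ as the solution of the \emph{linear} problem $u_t+\ve u_{\sigma\sigma\sigma\sigma}=\tilde{\mathcal F}(\tilde u)$ and derives the $H^2$ contraction estimate by the energy method (multiplying by $\bar u_{\sigma\sigma\sigma\sigma}+\bar u$ and integrating), whereas you write the equivalent Duhamel formula and use the explicit Fourier smoothing bound $\|e^{-tA_\ve}\|_{L^2\to H^2}\le C(1+(\ve t)^{-1/2})$ together with maximal $L^2$-regularity for the self-adjoint generator. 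Both routes are standard and yield the same $T^\ve=T^\ve(\ve,\|u_0\|_{H^2},\delta_0-\max|u_0|)$ and the same regularity class \eqref{regularity}.
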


\begin{proof} Choose $T^\ve>0$ and $M>\|u_0\|_{H^2(I)}$, 
and introduce the set 
$$
K:=\{u: \|u\|_{H^2}\leq M,\, |u|\leq \delta_0\, \text{in} \ I\}.
$$
Given $\tilde{u}\in L^\infty(0,T^\ve;K)$, consider the following auxiliary problem 
\begin{equation}\label{linear}
u_t + \ve u_{\sigma\sigma\sigma\sigma}= 
\mathcal{\tilde F}(\tilde{u})
\end{equation}
with $u(\sigma,0)=u_0(\sigma)$. 
Classical results (e.g., \cite{Lio72}) yield existence of a unique solution 
$u$ of \eqref{linear} which possesses the following regularity
\begin{equation*}
u\in L^2(0,T^\ve; H^4_{per}(I)) \cap H^1(0,T^\ve;L^2(I))\cap L^\infty(0,T^\ve;H^2_{per}(I)).
\end{equation*}
That is, a resolving operator 
\begin{equation*} 
\mathcal{T}\colon L^\infty(0,T^\ve;K)\to L^\infty(0,T^\ve;H^2(I))
\end{equation*}
which maps $\tilde{u}$ to the solution $u$ is well defined. Next we show that $\mathcal{T}$ is a contraction in $K$, 
provided that  $T^\ve$ is chosen sufficiently small.

Consider  $\tilde{u}_1,\tilde{u}_2\in  L^\infty(0,T^\ve; K)$ satisfying the initial condition 
$\tilde{u}_1(\sigma,0)=\tilde{u}_2(\sigma,0)=u_0(\sigma)$ and define $u_1:=\mathcal{T}(\tilde{u}_1)$, 
$u_2:=\mathcal{T}(\tilde{u}_2)$. Let $\bar{u}:=u_1-u_2$. Then multiply the equality
$
\bar{u}_t + \ve \bar{u}_{\sigma\sigma\sigma\sigma} = \mathcal{\tilde F}(\tilde{u}_1)-\mathcal{\tilde F}(\tilde{u}_2)
$
by $(\bar{u}_{\sigma\sigma\sigma\sigma}+\bar{u})$ and integrate. 
Integrating by parts and using the Cauchy-Schwarz inequality  yields
\begin{align*}
\frac{1}{2}\frac{d}{dt}\int_I (\bar{u}_{\sigma\sigma}^2+ \bar{u}^2)d\sigma&+\ve 
\|\bar{u}_{\sigma\sigma\sigma\sigma}\|^2_{L^2(I)}+\ve \|\bar{u}_{\sigma\sigma}\|^2_{L^2(I)} \\
&\leq \|\mathcal{\tilde F}(\tilde u_1)-\mathcal{\tilde F}(\tilde u_2)\|_{L^2(I)}(\|\bar{u}_{\sigma\sigma\sigma\sigma}\|_{L^2(I)}+\|\bar{u}\|_{L^2(I)}).
\end{align*}
Note that by Lemma \ref{lem1} the map  $\mathcal{F}(u)$ with values in $L^2(I)$ is 
Lipschitz on $K$; since $\mathcal{\tilde F}(u)=S(u)\mathcal{F}(u)/(1- u \kappa_0)$ it is not hard to 
see that $\mathcal{\tilde F}(u)$ is also Lipschitz. 
Using this and applying Young's inequality to the right hand side we obtain
 \begin{equation}
 \label{contproof1} 
 \frac{1}{2}\frac{d}{dt}\|\bar{u}\|^2_{H^2(I)}\leq C\|\tilde{u}_1-\tilde{u}_2\|_{H^2(I)}^2+\frac{1}{2}\|\bar{u}\|_{H^2(I)}^2
 \end{equation}
with a constant $C$ independent of $\tilde{u}_1$ and $\tilde{u}_2$ and $T_\ve$. 
Applying the Gr\"onwall inequality on \eqref{contproof1} we get
\begin{equation}
\label{contend}
\sup_{0\leq t\leq T^\ve} \|\mathcal{T}(\tilde{u}_1)-\mathcal{T}(\tilde{u}_2)\|^2_{H^2(I)} \leq 2(e^{T^\ve}-1)C\|\tilde{u}_1-\tilde{u}_2\|_{L^\infty(0,T^\ve;H^2(I))}
\end{equation}
Similar arguments additionally yield the following bound for $u=\mathcal{T}(\tilde u)$,
\begin{equation}
\label{notFar}
\sup_{0\leq t\leq T^\ve}\|u(t)\|_{H^2(I)}^2
\leq  (e^{T^\ve}-1)C_1
+e^{T^\ve} \|u_0\|_{H^2(I)}^2,
\end{equation}
with $C_1$ independent of $\tilde u\in L^\infty(0,T^\ve;K)$. Choosing $T^\ve$ sufficiently small we 
get that $\|u(t)\|_{H^2(I)}\leq M$ for $0<t<T^\ve$. 

Finally, multiply
\eqref{linear} by $(u-u_0)$ and integrate. After integrating by parts and using the fact that $\|u\|_{H^2(I)}\leq M$ we obtain
$$ 
\sup_{0\leq t\leq T^\ve}\|u(t)-u_0\|_{L^2(I)}^2\leq C_3(e^{T^\ve}-1).
$$
Then using the interpolation inequality 

$$
\|u-u_0\|^2_{C(I)}\leq C\|u-u_0\|_{H^2(I)}\|u-u_0\|_{L^2(I)}
$$ 
we get the bound
\begin{equation}
\label{Linftyblizko}
\sup_{0\leq t\leq T^\ve}\|u(t)-u_0\|_{C(I)}^4\leq C_4\|u(t)-u_0\|_{L^2(I)}^2\leq C_5(e^{T^\ve}-1).
\end{equation}

Now by \eqref{contend} and \eqref{Linftyblizko} we see that, possibly after passing to a smaller $T^\ve$, 
$\mathcal{T}$ maps $K$ into $K$ and it is a contraction on $K$.
\end{proof}

\noindent{\bf Step 3.} {\em Regularized equation: a priori estimates,  existence on time interval independent of $\ve$, and limit as $\ve\to 0$}.\\
In this step we derive a priori estimates which imply existence of a solution of \eqref{eqnfordve} on a time interval 
independent of $\ve$. 
These estimates are also used to pass to the $\ve\to 0$ limit. 


 \begin{lemma}\label{aprioriestH2uniform}
 Assume that $u_0\in H^2_{per}$ and $\| u_0 \|_{L^\infty(I)}<\delta_0$. Let $u^\ve$ solve 
 \eqref{eqnfordve} on a time interval $[0,T^\ve]$ with initial data $u^\ve(0)=u_0$, 
 and let $u^\ve$ satisfy $|u^\ve(\sigma,t)|\leq \delta_0$ on $I\times T^\ve$.
 Then 
 \begin{equation}
\label{unifH2bound}
\|u^\ve_{\sigma\sigma}\|_{L^2(I)}^2\leq a(t),
\end{equation} 
where $a(t)$ is the solution of 
\begin{equation}
\label{supersol1a} 
\dot a=2P a^3+2Q, \quad a(0)=\|(u_0)_{\sigma\sigma}\|_{L^2}^2
\end{equation}
(continued by $+\infty$ after the 
blow up time),
and  $0<P<\infty$, $0<Q<\infty$ are  independent of $\ve$ and $u_0$.
\end{lemma}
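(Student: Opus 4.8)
The plan is to derive an energy-type differential inequality for $y(t):=\|u^\ve_{\sigma\sigma}\|_{L^2(I)}^2$ by testing equation \eqref{eqnfordve} against $u^\ve_{\sigma\sigma\sigma\sigma}$ (or, equivalently, testing the $\sigma$-differentiated equation against $u^\ve_{\sigma\sigma\sigma}$ after suitable integration by parts), and then to invoke a comparison principle for scalar ODEs: if $\dot y\le 2Py^3+2Q$ and $y(0)=a(0)$, then $y(t)\le a(t)$ on the interval where the solution $a$ of \eqref{supersol1a} exists. The regularizing term $\ve u^\ve_{\sigma\sigma\sigma\sigma}$ is our friend here: multiplied by $u^\ve_{\sigma\sigma\sigma\sigma}$ it produces $\ve\|u^\ve_{\sigma\sigma\sigma\sigma}\|_{L^2}^2\ge 0$ on the left, which we keep, so the estimate we get is automatically $\ve$-independent on the right.

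First I would multiply \eqref{eqnfordve} by $u^\ve_{\sigma\sigma\sigma\sigma}$ and integrate over $I$. The left side gives $\tfrac12\tfrac{d}{dt}\|u^\ve_{\sigma\sigma}\|_{L^2}^2+\ve\|u^\ve_{\sigma\sigma\sigma\sigma}\|_{L^2}^2$ plus the term coming from $\Phi(V^\ve)$, while the right side is $\int_I \tfrac{S^\ve}{1-u^\ve\kappa_0}\kappa(u^\ve)\,u^\ve_{\sigma\sigma\sigma\sigma}\,d\sigma$ plus the nonlocal (constant-in-$\sigma$) term, which integrates against $u^\ve_{\sigma\sigma\sigma\sigma}$ to zero by periodicity. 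Then I would identify the principal part: from the curvature formula \eqref{curvature}, $\tfrac{S}{1-u\kappa_0}\kappa(u)=\tfrac{u_{\sigma\sigma}}{S^2}+(\text{lower order in }u_{\sigma\sigma})$, and the coefficient $1/S^2$ is bounded above and below (by \eqref{smalldelta}, using $|u|\le\delta_0$). The leading contribution is therefore $-\int_I \tfrac{1}{S^2}u_{\sigma\sigma}(u_{\sigma\sigma\sigma\sigma})\,d\sigma$ after using that the coefficient is essentially constant up to $u_\sigma$-dependent corrections; integrating by parts twice converts this into $-\int_I \tfrac{1}{S^2}u_{\sigma\sigma\sigma}^2\,d\sigma$ plus commutator terms. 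The negative term $-\int \tfrac{1}{S^2}u_{\sigma\sigma\sigma}^2$ is the dissipation we exploit (together with $-\ve\|u_{\sigma\sigma\sigma\sigma}\|^2$) to absorb the bad commutator and nonlinear terms via Young's inequality.

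The heart of the matter is the bookkeeping of the nonlinear error terms. All of them are polynomial expressions in $u,u_\sigma,u_{\sigma\sigma},u_{\sigma\sigma\sigma}$ (the $\Phi$-term also involves $V^\ve=\mathcal F(u^\ve)$, which by Lemma \ref{lem1} is controlled in $L^2$ by $\|\kappa(u^\ve)\|_{L^2}$, hence by $\|u^\ve_{\sigma\sigma}\|_{L^2}$ up to constants); with $|u|\le\delta_0$ fixed and $\|u_\sigma\|_{L^\infty}\le(\text{const})\|u_{\sigma\sigma}\|_{L^2}^{1/2}\|u_\sigma\|_{L^2}^{1/2}$-type interpolation plus Poincaré-type control of lower norms, each term is bounded by $\eta\int\tfrac{1}{S^2}u_{\sigma\sigma\sigma}^2+C_\eta(1+\|u_{\sigma\sigma}\|_{L^2}^2)^3$ — the cubic power arising because the worst term, schematically $\int u_{\sigma\sigma}^3 u_{\sigma\sigma\sigma}$ or $\int u_\sigma u_{\sigma\sigma}^2 u_{\sigma\sigma\sigma}$, when split by Young into an $L^2$ norm of $u_{\sigma\sigma\sigma}$ times an $L^6$ or $L^4$ norm of $u_{\sigma\sigma}$, produces $\|u_{\sigma\sigma}\|_{L^2}^{2}\cdot\|u_{\sigma\sigma}\|_{L^2}^{4}$ after Gagliardo–Nirenberg (the $H^1$ norm of $u_{\sigma\sigma}$ is again dominated by $\eta\|u_{\sigma\sigma\sigma}\|_{L^2}^2+C\|u_{\sigma\sigma}\|_{L^2}^2$). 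Choosing $\eta$ small absorbs all these into the dissipation, leaving $\tfrac12\tfrac{d}{dt}\|u^\ve_{\sigma\sigma}\|_{L^2}^2\le P\|u^\ve_{\sigma\sigma}\|_{L^2}^6+Q$ with $P,Q$ depending only on $\delta_0,\|\kappa_0\|_{C^2},\|\Phi\|_\infty$ and $J$ — crucially not on $\ve$ nor on $u_0$.

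I expect the main obstacle to be precisely this term-by-term estimation: one must verify that every commutator generated by the non-constant coefficient $1/S^2$ and by the $(\kappa_0)_\sigma u_\sigma u$ and $2\kappa_0 u_\sigma^2$ pieces of \eqref{curvature}, as well as the derivative terms coming from $\partial_\sigma^2$ of $S^\ve/(1-u^\ve\kappa_0)$ hitting $\kappa(u^\ve)$, can genuinely be controlled by $\eta\|u_{\sigma\sigma\sigma}\|_{L^2}^2$ plus a cubic-in-$\|u_{\sigma\sigma}\|_{L^2}^2$ remainder, with no term forcing a higher power. A secondary technical point is that these manipulations are first justified for the regularized solution $u^\ve$ (which is smooth enough by Proposition \ref{prop1}, lying in $L^2(0,T^\ve;H^4_{per})\cap H^1(0,T^\ve;L^2)$, so all integrations by parts are legitimate) and that the differential inequality then holds in the sense needed to apply the ODE comparison — i.e. $y(t)=\|u^\ve_{\sigma\sigma}(t)\|_{L^2}^2$ is absolutely continuous on $[0,T^\ve]$, which follows from the regularity in \eqref{regularity}. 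Once the inequality $\dot y\le 2Py^3+2Q$ is in hand, the comparison with \eqref{supersol1a} (extended by $+\infty$ past its blow-up time) is immediate, yielding \eqref{unifH2bound}.
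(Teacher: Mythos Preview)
Your overall strategy --- differentiate in $\sigma$ (equivalently, test \eqref{eqnfordve} against $u^\ve_{\sigma\sigma\sigma\sigma}$), extract a $u_{\sigma\sigma\sigma}^2$ dissipation, absorb the nonlinear remainders by interpolation, and compare with the ODE \eqref{supersol1a} --- is precisely the paper's. But two of your claimed simplifications are false and hide the real work.

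First, you assert that $1/S^2$ is bounded above \emph{and below}. It is bounded above (since $S\ge 1-\delta_0\|\kappa_0\|_\infty>0$) but \emph{not} below: $S=\sqrt{u_\sigma^2+(1-u\kappa_0)^2}$ grows with $|u_\sigma|$, which is controlled only by $\|u_{\sigma\sigma}\|_{L^2}$. So the dissipation you actually obtain is $\int u_{\sigma\sigma\sigma}^2\,d\sigma/S^2$, not $\|u_{\sigma\sigma\sigma}\|_{L^2}^2$, and unweighted Gagliardo--Nirenberg does not close the estimate with $\ve$-independent constants. The paper keeps the $S$-weights throughout: the worst term is $\int |u_{\sigma\sigma}|^2|u_{\sigma\sigma\sigma}|\,d\sigma/S^3$, split by Cauchy--Schwarz into $\int u_{\sigma\sigma}^4/S^4$ and $\int u_{\sigma\sigma\sigma}^2/S^2$, and then a dedicated weighted interpolation inequality (Lemma~\ref{interplemma}),
\[
\int_I \frac{u_{\sigma\sigma}^4}{S^4}\,d\sigma\le \mu\int_I \frac{u_{\sigma\sigma\sigma}^2}{S^2}\,d\sigma+C_\mu\Bigl(\int_I u_{\sigma\sigma}^2\,d\sigma\Bigr)^3+C_\mu,
\]
is what produces the cubic right-hand side with constants independent of the size of $u_\sigma$.

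Second, the $\Phi$-contribution is \emph{not} a lower-order term controllable via the $L^2$ bound on $V^\ve$ from Lemma~\ref{lem1}. After one $\sigma$-derivative it yields $\Phi'(V^\ve)V^\ve_\sigma$, and $V^\ve_\sigma$ has $u_{\sigma\sigma\sigma}$ as its principal part --- the same order as the dissipation. The paper handles this via the identity $(1-\Phi'(V))V_\sigma=\kappa_\sigma$ (obtained by differentiating $V=\kappa+\Phi(V)-\lambda$), which merges the $\Phi$-piece with the curvature term to give the single elliptic coefficient $\frac{1}{S^2(1-\Phi'(V))}$; this is positive exactly because $\|\Phi'\|_\infty<1$. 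Without this merger the $\Phi$-term cannot simply be ``absorbed''. (A minor third point: the nonlocal term in \eqref{eqnfordve} carries the $\sigma$-dependent prefactor $S^\ve/(1-u^\ve\kappa_0)$ and does not integrate to zero against $u^\ve_{\sigma\sigma\sigma\sigma}$; this is harmless after one integration by parts, but your claim as written is incorrect.)
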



\begin{proof} For brevity we adopt the notation $u:=u^\ve,\; V:=V^\ve,\; S:=S^\ve$. 
Differentiate equation \eqref{eqnfordve} in $\sigma$ to find that
\begin{equation}\label{apriori1}
\begin{aligned}
u_{\sigma t}  + \ve \frac{\partial^5 u}{\partial \sigma^5} &-\frac{S}{1-u \kappa_0} \Phi^\prime (V) V_\sigma = 
\frac{S}{1-u \kappa_0} (\kappa(u))_\sigma\\
&+
\Bigl(\frac{S}{1-u \kappa_0}\Bigr)_\sigma \Bigl(\kappa(u)-\Phi(V)-\frac{2\pi}{L[u]}-
\frac{1}{L[u]} \int_I \Phi(V)S d\sigma\Bigr).
\end{aligned}  
\end{equation}
Next we rewrite \eqref{eqnfordve} in the form $V = \Phi(V)+\kappa - \lambda$ to calculate $V_\sigma$,  
$$(1- \Phi'(V))V_\sigma = \kappa_\sigma$$
whence
\[
V_\sigma = \frac{1}{1- \Phi^\prime(V)} \kappa_\sigma.
\]
Now we substitute this in \eqref{apriori1} to find that
 \begin{equation}\label{aprioriFinal}
\begin{aligned}
u_{\sigma t}  + \ve \frac{\partial^5 u}{\partial \sigma^5} &=
\frac{u_{\sigma\sigma\sigma}}{S^2(1-\Phi^\prime(V))}
-\frac{\Phi^\prime(V)+2}{(1-\Phi^\prime(V))S^4} u_\sigma u_{\sigma\sigma}^2 +
A(\sigma,V, u,u_\sigma)u_{\sigma\sigma}
\\
&+B(\sigma,V, u,u_\sigma)+
\Bigl(\frac{S}{1-u \kappa_0}\Bigr)_\sigma \Bigl(\Phi(V)-\frac{2\pi}{L[u]}-
\frac{1}{L[u]} \int_I \Phi(V)S d\sigma\Bigr),
\end{aligned}  
\end{equation}
where $A(\sigma,V, u,p)$ and $B(\sigma,V,u,p)$ are bounded continuous 
functions on $I\times \mathbb{R}\times [-\delta_0,\delta_0] \times \mathbb{R}$. 

Multiply \eqref{aprioriFinal} by $u_{\sigma\sigma\sigma}$ and integrate over  $I$, integrating by parts
on the left hand side. We find after rearranging terms and 
setting $\gamma:=\sup (1-\Phi^\prime(V))$ ($0<\gamma<\infty$),
\begin{equation}
\label{TrudnayaOtsenka}
\begin{aligned}
 \frac{1}{2}\frac{d}{dt}\|u_{\sigma\sigma}\|_{L^2(I)}^2 + 
 \ve \|u_{\sigma\sigma\sigma\sigma}\|_{L^2(I)}^2&+
 \frac{1}{\gamma}\int_I |u_{\sigma\sigma\sigma}|^2\frac{d\sigma}{S^2}\leq C 
 \int_I |u_{\sigma\sigma}|^2|u_{\sigma\sigma\sigma}|\frac{d\sigma}{S^3}\\
 &+ C_1\bigl(\|u_{\sigma\sigma}\|_{L^2(I)}^3+1\bigr)\Bigl(\int_I |u_{\sigma\sigma\sigma}|^2\frac{d\sigma}{S^2}\Bigr)^{1/2},
\end{aligned}
\end{equation}
where we have also used the inequality $\|u_\sigma\|_{L^\infty(I)}^2\leq |\,I\,| \|u_{\sigma\sigma}\|^2_{L^2(I)}$ and estimated various terms with the help of the Cauchy-Schwarz inequality. Next we estimate the first term in the right hand side of 
\eqref{TrudnayaOtsenka} as follows
\begin{equation}
\label{PromezhOtsenka}
\int_I |u_{\sigma\sigma}|^2|u_{\sigma\sigma\sigma}|\frac{d\sigma}{S^3}\leq 
C_\gamma \int_I |u_{\sigma\sigma}|^4\frac{d\sigma}{S^4}
+\frac{1}{4C_\gamma}  \int_I |u_{\sigma\sigma\sigma}|^2\frac{d\sigma}{S^2}
\end{equation}
and apply the following interpolation type inequality, 
whose proof is given in Lemma \ref{interplemma}: for all 
$u\in H^3_{per}(I)$ such that $|u|\leq \delta_0$ it holds that
\begin{equation}
\label{firstInterpolation}
\int_I |u_{\sigma\sigma}|^4\frac{d\sigma}{S^4} \leq \mu \int_I |u_{\sigma\sigma\sigma}|^2 \frac {d\sigma}{S^2}+C_2\Bigl(\int u_{\sigma\sigma}^2d\sigma\Bigr)^3+C_3\quad \forall \mu>0,
\end{equation}
where $C_2$ and $C_3$ depend only on $\mu$. Now we use  \eqref{PromezhOtsenka}  and 
\eqref{firstInterpolation} in \eqref{TrudnayaOtsenka}, and estimate the last term of 
\eqref{TrudnayaOtsenka} with the help of Young's inequality to derive that
\begin{equation}
\label{FinalnayaOtsenka}
 \frac{1}{2}\frac{d}{dt}\|u_{\sigma\sigma}\|_{L^2(I)}^2 + 
 \ve \|u_{\sigma\sigma\sigma\sigma}\|_{L^2(I)}^2+
 \frac{1}{4\gamma}\int_I |u_{\sigma\sigma\sigma}|^2\frac{d\sigma}{S^2}\leq P\|u_{\sigma\sigma}\|^6_{L^2(I)}
 +Q.
\end{equation}
Then by a standard comparison argument for ODEs $\|u_{\sigma\sigma}\|_{L^2(I)}^2\leq a(t)$, where $a$ solves \eqref{supersol1a}. The Lemma is proved.
\end{proof}
\begin{lemma}\label{interplemma} Assume that $u\in H^3_{per}(I)$ and $|u|\leq \delta_0$ on $I$. Then
\eqref{firstInterpolation} holds with $C_2$ and $C_3$ depending on $\mu$ only.
\end{lemma}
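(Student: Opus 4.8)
The plan is to reduce the claimed bound to a standard Gagliardo--Nirenberg interpolation estimate on the periodic interval $I$, treating the weight $S^{-1}=(u_\sigma^2+(1-u\kappa_0)^2)^{-1/2}$ as a harmless multiplicative factor. Since $|u|\leq\delta_0$ and \eqref{smalldelta} holds, $S$ is bounded above and below by positive constants depending only on $\delta_0$ and $\|\kappa_0\|_{L^\infty}$: indeed $\omega\leq 1-u\kappa_0\leq S$ and $S\leq\sqrt{u_\sigma^2+1}$, so $\omega^{-4}\geq S^{-4}\geq(u_\sigma^2+1)^{-2}$. Hence it suffices to prove
\[
\int_I |u_{\sigma\sigma}|^4\,d\sigma \leq \tilde\mu \int_I |u_{\sigma\sigma\sigma}|^2\,d\sigma + C\Bigl(\int_I u_{\sigma\sigma}^2\,d\sigma\Bigr)^3 + C
\]
for all $\tilde\mu>0$, with $C$ depending on $\tilde\mu$ (and on the fixed geometric data), and then absorb the upper bound $\omega^{-4}$ into the constants and the lower bound into the $\mu$ on the right. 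The left side only loses a factor $\omega^{-4}$ when passing from the weighted to the unweighted integral, and the right side gains the factor $\sup S^{-2}\le\omega^{-2}$ in front of $\int|u_{\sigma\sigma\sigma}|^2$, which is fine after renaming $\mu$.

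Next I would establish the unweighted inequality. Write $v:=u_{\sigma\sigma}$, which lies in $H^1_{per}(I)$. The one-dimensional Gagliardo--Nirenberg inequality on a bounded interval gives
\[
\|v\|_{L^4(I)}\leq C\|v\|_{H^1(I)}^{1/4}\|v\|_{L^2(I)}^{3/4},
\]
hence $\|v\|_{L^4(I)}^4\leq C\|v\|_{H^1(I)}\|v\|_{L^2(I)}^{3} = C(\|v_\sigma\|_{L^2}+\|v\|_{L^2})\|v\|_{L^2}^3$. Now $v_\sigma=u_{\sigma\sigma\sigma}$, so Young's inequality (splitting off $\|u_{\sigma\sigma\sigma}\|_{L^2}\cdot\|v\|_{L^2}^3\le \tilde\mu\|u_{\sigma\sigma\sigma}\|_{L^2}^2+C_{\tilde\mu}\|v\|_{L^2}^6$) yields
\[
\|v\|_{L^4(I)}^4 \leq \tilde\mu\|u_{\sigma\sigma\sigma}\|_{L^2(I)}^2 + C_{\tilde\mu}\Bigl(\int_I u_{\sigma\sigma}^2\,d\sigma\Bigr)^3 + C\Bigl(\int_I u_{\sigma\sigma}^2\,d\sigma\Bigr)^2.
\]
The remaining $(\int u_{\sigma\sigma}^2)^2$ term is then controlled by $\epsilon_0(\int u_{\sigma\sigma}^2)^3 + C(\epsilon_0)$ via Young once more, which fits into the allowed form $C_2(\int u_{\sigma\sigma}^2)^3+C_3$. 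Reinstating the weights as explained above completes the proof.

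The only subtlety — and the step I would be most careful about — is the dependence of the Gagliardo--Nirenberg constant on the domain and the fact that we are on a periodic interval with no zero-average assumption on $v=u_{\sigma\sigma}$; on a bounded interval the inequality $\|v\|_{L^4}\le C\|v\|_{H^1}^{1/4}\|v\|_{L^2}^{3/4}$ holds with the full $H^1$ norm (not just the seminorm), so no Poincar\'e-type correction is needed, and periodicity only helps. I would also note in passing that one cannot drop the additive constant $C_3$: even if $\int u_{\sigma\sigma}^2$ is small, the $(\cdot)^2$ term must be dominated, which forces the extra constant. Everything else is routine, so I expect the write-up to be short.
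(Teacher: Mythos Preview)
Your reduction to the unweighted Gagliardo--Nirenberg inequality has a gap: $S=S(u)=\sqrt{u_\sigma^2+(1-u\kappa_0)^2}$ is \emph{not} bounded above by a constant depending only on $\delta_0$ and $\|\kappa_0\|_{L^\infty}$. The hypotheses are merely $u\in H^3_{per}(I)$ and $|u|\leq\delta_0$; nothing controls $\|u_\sigma\|_{L^\infty}$, so $S$ can be arbitrarily large. You correctly observe $S\geq\omega>0$, which lets you estimate the \emph{left} side $\int u_{\sigma\sigma}^4 S^{-4}\,d\sigma\leq\omega^{-4}\int u_{\sigma\sigma}^4\,d\sigma$. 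But on the \emph{right} side the inequality goes the wrong way: from $S^{-2}\leq\omega^{-2}$ you only get $\int|u_{\sigma\sigma\sigma}|^2 S^{-2}\,d\sigma\leq\omega^{-2}\int|u_{\sigma\sigma\sigma}|^2\,d\sigma$, whereas what you need in order to ``rename $\mu$'' is the reverse bound $\int|u_{\sigma\sigma\sigma}|^2\,d\sigma\leq C\int|u_{\sigma\sigma\sigma}|^2 S^{-2}\,d\sigma$, which would require a uniform upper bound on $S$. So your unweighted estimate $\tilde\mu\int|u_{\sigma\sigma\sigma}|^2\,d\sigma$ cannot be absorbed into the allowed term $\mu\int|u_{\sigma\sigma\sigma}|^2 S^{-2}\,d\sigma$, and the resulting constants would depend on $\|u_\sigma\|_{L^\infty}$, contrary to the statement.

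This is exactly why the paper does not strip the weight. Its argument keeps $S$ in play throughout: one writes $\int u_{\sigma\sigma}^4 S^{-4}\,d\sigma\leq C\|u_{\sigma\sigma}^2/S\|_{L^\infty}\int u_{\sigma\sigma}^2\,d\sigma$ (using only $S\geq\omega$), then bounds $\|u_{\sigma\sigma}^2/S\|_{L^\infty}$ by $\int|(u_{\sigma\sigma}^2/S)_\sigma|\,d\sigma$ (here the zero mean of $u_{\sigma\sigma}$ is used), and after differentiating and applying Cauchy--Schwarz the third-derivative contribution naturally appears as $\bigl(\int u_{\sigma\sigma\sigma}^2 S^{-2}\,d\sigma\bigr)^{1/2}$ rather than the unweighted norm. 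The extra $S$-derivative produces a term $\bigl(\int u_{\sigma\sigma}^4 S^{-4}\,d\sigma\bigr)^{1/2}\bigl(\int u_{\sigma\sigma}^2\,d\sigma\bigr)^{1/2}$, which is absorbed back into the left side. The upshot is that the weight $S^{-2}$ on the third derivative is not a cosmetic choice but is forced by the structure of the problem; you should rework the argument so that the weight is carried through, not discarded at the outset.
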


\begin{proof}
The following straightforward bounds will be used throughout the proof,
\begin{equation}
\label{AUX1S}
\frac{1}{\sqrt{2}}(|u_\sigma|+(1-\delta_0\|\kappa_0\|_{L^\infty(I)}))\leq S(u) \leq |u_\sigma|+1,
\end{equation}
 \begin{equation}
 \label{AUX2S}
|(S(u))_\sigma|\leq \frac{1}{S} (|u_\sigma||u_{\sigma\sigma}|+C|u_\sigma|+C_1).
 \end{equation}

Note that 
\begin{equation}\label{Otsenka_lem21}
\int_I u_{\sigma\sigma}^4\frac{d\sigma}{S^4} \leq C \left\| u^2_{\sigma\sigma}/S\right\|_{L^\infty(I)} \int_I u_{\sigma\sigma}^2d\sigma,
\end{equation}
where $C>0$ is independent of $u$. 
Since $\int_I u_{\sigma\sigma} d\sigma=0$ we have 
\begin{equation}
\label{Otsenka_lem22}
\| u_{\sigma\sigma}^2/S\|_{L^\infty(I)} \leq  \int_I|(u_{\sigma\sigma}^2/S)_\sigma|d\sigma.
\end{equation}
Next we use \eqref{AUX1S}, \eqref{AUX2S} to estimate the right hand side of  \eqref{Otsenka_lem22} with the help of the 
Cauchy-Schwarz inequality,
\begin{equation}
\label{Otsenka_lem23}
\begin{aligned}
\int_I|(u_{\sigma\sigma}^2/S)_\sigma|d\sigma&\leq 
2 \int_I\left|u_{\sigma\sigma\sigma}u_{\sigma\sigma}\right|\frac{d\sigma}{S} + 
C\left(\int_I |u_{\sigma\sigma}|^3 \frac{d\sigma}{S^2}+\int_I u_{\sigma\sigma}^2d\sigma\right) \\
&\leq 2 \left(\int_Iu^2_{\sigma\sigma\sigma}\frac{d\sigma}{S^2}\right)^{1/2} \left(\int_I u_{\sigma\sigma}^2 d\sigma \right)^{1/2} \\
 &+ C\left(\int_Iu^4_{\sigma\sigma} \frac{d\sigma}{S^4}\right)^{1/2} \left(\int_I u_{\sigma\sigma}^2 d\sigma\right)^{1/2}+ 
C \int_I u_{\sigma\sigma}^2d\sigma.
\end{aligned}
\end{equation}
Plugging \eqref{Otsenka_lem22}-\eqref{Otsenka_lem23} into \eqref{Otsenka_lem21} and using Young's inequality yields
\begin{equation*}
\begin{aligned}
\int_I u_{\sigma\sigma}^4\frac{d\sigma}{S^4}
\leq& 2 \left(\int_Iu^2_{\sigma\sigma\sigma} \frac{d\sigma}{S^2}\right)^{1/2} \left(\int_I u_{\sigma\sigma}^2d\sigma \right)^{3/2} \\
&+ \frac{1}{2}\int_I u^4_{\sigma\sigma}\frac{d\sigma}{S^4} +  C\left(\int_I u_{\sigma\sigma}^2d\sigma \right)^{3}+C.
\end{aligned}
\end{equation*}
Finally, using here Young's inequality once more time we deduce \eqref{firstInterpolation}.
\end{proof}

Consider a time interval $[0,T^*]$ with $T^*>0$ slightly smaller than the blow up  time $T^{bu}$ of \eqref{supersol1a}.
As a byproduct of Lemma  \ref{aprioriestH2uniform} (cf. \eqref{FinalnayaOtsenka}) we then have
for any $0<T\leq \min\{T^\ve,T^*\}$
\begin{equation}
\label{byprod1}
\sup_{t\in[0,T]}\|u_{\sigma\sigma}\|^2_{L^2(I)}+ \ve \int_0^{T}\|u_{\sigma\sigma\sigma\sigma}\|_{L^2(I)}^2 dt+
 \int_0^{T}\int_I \|u_{\sigma\sigma\sigma}\|^2_{L^2(I)}dt\leq C
\end{equation}
where $u:=u^\ve(\sigma,t)$ is the solution of \eqref{eqnfordve} described in Proposition \ref{prop1}, 
and $C$ is independent of $\ve$ and $T$. 
In order to show that the solution of  \eqref{eqnfordve} exists on a time interval independent of $\ve$, it 
remains to obtain a uniform in $\ve$ estimate on the growth of  $\|u^\ve-u_0\|_{C(I)}$ in time. Arguing as in the 
end of the proof of  Proposition \ref{prop1} and using \eqref{byprod1} one can  prove

\begin{lemma}
\label{afteraprioriestinC}
Assume that $u_0\in H^2_{per}(I)$,  $\| u_0 \|_{L^\infty(I)}<\delta_0$ and assume that the solution 
$u^\ve$ of \eqref{eqnfordve} satisfies $|u^\ve(\sigma,t)|\leq \delta_0$ on $I\times [0,T^\ve]$.   
Then for all $0<t\leq \min\{T^\ve, T^*\}$,
\begin{equation}
\label{uniformw12bound}
\|u^\ve-u_0\|_{C(I)}^4\leq C(e^{t}-1)
\end{equation} 
where $C$ is independent of $\ve$. In particular,  there exists $0<T^{**}\leq T^*$, independent of $\ve$, 
such that $\sup \bigl\{\|u^\ve(t)\|_{L^\infty(I)};\,  0\leq t\leq \min\{T^\ve,T^{**}\}\bigr\}< \delta_0$.
\end{lemma}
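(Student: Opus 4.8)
The statement to prove is Lemma~\ref{afteraprioriestinC}, which asserts a uniform-in-$\ve$ bound $\|u^\ve - u_0\|_{C(I)}^4 \leq C(e^t-1)$ on the time interval $[0,\min\{T^\ve,T^*\}]$, and extracts from it a uniform lower bound $T^{**}$ on the time during which $\|u^\ve(t)\|_{L^\infty(I)} < \delta_0$.

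\medskip

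\textbf{Plan.} The plan is to mimic the end of the proof of Proposition~\ref{prop1}, but this time with all constants controlled by the $\ve$-independent a priori estimate \eqref{byprod1} rather than by the constant $M$ bounding the $H^2$ norm along the contraction argument. First I would test equation \eqref{eqnfordve} against $(u^\ve - u_0)$ and integrate over $I$. The regularization term $\ve u^\ve_{\sigma\sigma\sigma\sigma}$, after integrating by parts twice, contributes $\ve\int_I u^\ve_{\sigma\sigma}(u^\ve-u_0)_{\sigma\sigma}\,d\sigma$; using $\sup_t\|u^\ve_{\sigma\sigma}\|_{L^2(I)}^2 \leq C$ from \eqref{byprod1} and $\|(u_0)_{\sigma\sigma}\|_{L^2(I)}<\infty$, this term is bounded by an $\ve$-independent constant times $\ve \leq 1$, hence harmlessly absorbed. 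The right-hand side $\tilde{\mathcal F}(u^\ve)$ is bounded in $L^2(I)$ uniformly in $\ve$: indeed $S(u^\ve) \leq |u^\ve_\sigma|+1$ is controlled via $\|u^\ve_\sigma\|_{L^\infty(I)}^2 \leq |I|\,\|u^\ve_{\sigma\sigma}\|_{L^2(I)}^2 \leq C$, the factor $1/(1-u^\ve\kappa_0)$ is bounded below by $\omega = 1-\delta_0\|\kappa_0\|_{L^\infty}>0$, and $\mathcal F(u^\ve) = V^\ve$ is bounded in $L^2(I)$ by \eqref{Vbounded} with $\kappa = \kappa(u^\ve)$, whose $L^2$ norm is in turn controlled by $\|u^\ve_{\sigma\sigma}\|_{L^2(I)}$ and the $C^4$ data $\kappa_0$. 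So Cauchy--Schwarz gives $\frac12\frac{d}{dt}\|u^\ve-u_0\|_{L^2(I)}^2 \leq C(1 + \|u^\ve-u_0\|_{L^2(I)}^2)$ with $C$ independent of $\ve$, and Gr\"onwall yields $\sup_{t}\|u^\ve-u_0\|_{L^2(I)}^2 \leq C(e^t-1)$.

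\medskip

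Next I would upgrade from $L^2$ to $C(I)$ via the interpolation inequality already used in Proposition~\ref{prop1}, namely $\|u^\ve-u_0\|_{C(I)}^2 \leq C\|u^\ve-u_0\|_{H^2(I)}\|u^\ve-u_0\|_{L^2(I)}$; since $\|u^\ve-u_0\|_{H^2(I)}$ is bounded uniformly in $\ve$ (again by \eqref{byprod1} and the $H^2$ regularity of $u_0$), squaring gives $\|u^\ve-u_0\|_{C(I)}^4 \leq C\|u^\ve-u_0\|_{L^2(I)}^2 \leq C(e^t-1)$, which is \eqref{uniformw12bound}. Finally, for the last assertion, since $\|u_0\|_{L^\infty(I)} < \delta_0$, pick $\eta := \delta_0 - \|u_0\|_{L^\infty(I)} > 0$ and choose $T^{**} \leq T^*$ small enough (depending only on $\eta$ and the $\ve$-independent constant $C$, hence not on $\ve$) so that $C(e^{T^{**}}-1) < \eta^4$. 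Then for all $t \leq \min\{T^\ve, T^{**}\}$ we get $\|u^\ve(t)\|_{L^\infty(I)} \leq \|u_0\|_{L^\infty(I)} + \|u^\ve(t)-u_0\|_{C(I)} < \|u_0\|_{L^\infty(I)} + \eta = \delta_0$, as claimed.

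\medskip

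\textbf{Main obstacle.} The only delicate point — and really the crux — is verifying that the right-hand side $\tilde{\mathcal F}(u^\ve)$ is bounded in $L^2(I)$ by a constant that depends only on the a priori bound \eqref{byprod1} and on the fixed reference curve data, and \emph{not} on the local Lipschitz constants appearing in Lemma~\ref{lem1} (those were stated as \emph{local}, i.e. on bounded subsets of $H^2$). Here one must use the explicit estimate \eqref{Vbounded}, which gives a genuinely global $L^2$ bound on $V = \mathcal F(u)$ in terms of $\|\kappa(u)\|_{L^2(I)}$ and $\|S(u)\|_{L^\infty(I)}$, both of which are polynomially controlled by $\|u^\ve_{\sigma\sigma}\|_{L^2(I)}$ and $\|u^\ve_\sigma\|_{L^\infty(I)}$ — quantities bounded uniformly in $\ve$ on $[0,T^*]$ by Lemma~\ref{aprioriestH2uniform}. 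Everything else is a routine energy estimate plus Gr\"onwall, exactly parallel to the concluding paragraph of Proposition~\ref{prop1}.
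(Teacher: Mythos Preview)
Your proposal is correct and follows exactly the approach the paper itself indicates: argue as in the closing paragraph of the proof of Proposition~\ref{prop1}, but replace the contraction-argument bound $\|u\|_{H^2}\leq M$ by the $\ve$-independent a priori bound \eqref{byprod1}. Your identification of the one genuine point to check---that $\tilde{\mathcal F}(u^\ve)$ is bounded in $L^2(I)$ uniformly in $\ve$ via the explicit estimate \eqref{Vbounded} rather than the merely local Lipschitz property of Lemma~\ref{lem1}---is spot on.
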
 

Combining Proposition \ref{prop1} with Lemma~\ref{aprioriestH2uniform} and Lemma~\ref{afteraprioriestinC} we 
see that the solution $u^\ve$ of \eqref{eqnfordve} exists on the time interval $[0,T^{**}]$ and  \eqref{byprod1}
holds with $T=T^{**}$. Now, it is not hard to pass to the limit $\ve\to 0$ in  \eqref{eqnfordve}.  
Indeed, exploiting \eqref{byprod1} we 
see that, up to extracting a subsequence, $u^\ve\rightharpoonup u$ weak star in $L^\infty(0,T^{**}; H^2_{per}(I))$ as $\ve\to 0$. Using
 \eqref{byprod1} in \eqref{evoMotion} we also conclude that  the family
 $\{u^\ve_t\}_{0<\ve\leq 1}$ is bounded in $L^2(I\times [0,T^{**}])$. Combining uniform estimates on $\|u^\ve_{\sigma\sigma\sigma}\|_{L^2(I\times [0,T])}$ (from \eqref{byprod1}) and $\|u_t^\ve\|_{L^2(I\times [0,T^{**}])}$ we deduce $u^\ve\to u$ strongly in $L^2(0,T^{**}; H^2_{per}(I))\cap C(0,T^{**};H^1_{per}(I))$. Moreover, $u^\ve_t+\ve u^\ve_{\sigma\sigma\sigma\sigma}\rightharpoonup u_t$ weak star in $L^\infty(0,T^{**}; L^2(I))$ and $u^\ve_t+\ve u^\ve_{\sigma\sigma\sigma\sigma}\to u_t$ strongly 
in $L^2(0,T^{**}; H^2_{per}(I))$. Thus, in the limit we obtain a solution $u$ of \eqref{deqn}. That is we have proved 

\begin{theorem}(Existence for smooth initial data)\label{thm_smooth_in_data}
For any $u_0\in H^2_{per}(I)$ such that $\|u_0\|_{L^\infty(I)}<\delta_0$ there exists a solution  $u$ of \eqref{deqn} on a time interval 
$[0,T]$, with $T>0$ depending on $u_0$.
\end{theorem}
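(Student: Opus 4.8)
The plan is to produce $u$ as a subsequential limit of the regularized solutions $u^\ve$ of \eqref{eqnfordve} as $\ve\to 0$, using precisely the three facts already established: local existence together with a continuation criterion (Proposition~\ref{prop1}), an $H^2$ a priori bound independent of $\ve$ and of $u_0$, valid up to the blow-up time of the scalar ODE \eqref{supersol1a} (Lemma~\ref{aprioriestH2uniform}), and $\ve$-independent control of $\|u^\ve-u_0\|_{C(I)}$ on a short interval (Lemma~\ref{afteraprioriestinC}). Two things then remain: assembling these into a time of existence that does not depend on $\ve$, and passing to the limit in the nonlinear, nonlocal right-hand side.

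\textbf{Uniform time of existence.} Fix $\ve\in(0,1]$, let $T^{bu}$ be the blow-up time of \eqref{supersol1a} (it depends only on $\|(u_0)_{\sigma\sigma}\|_{L^2}$), and pick $T^*<T^{bu}$. As long as $|u^\ve|\le\delta_0$, Lemma~\ref{aprioriestH2uniform} gives $\|u^\ve_{\sigma\sigma}(t)\|_{L^2}^2\le a(t)\le a(T^*)<\infty$ for $t\le T^*$, uniformly in $\ve$; and Lemma~\ref{afteraprioriestinC} produces $T^{**}\le T^*$, independent of $\ve$, on which $\|u^\ve(t)\|_{C(I)}$ stays below $\delta_0$ by a fixed margin $\alpha_0>0$ (shrink $T^{**}$ so the right-hand side of \eqref{uniformw12bound} is less than $\tfrac12(\delta_0-\|u_0\|_{L^\infty})^4$). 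Feeding the $\ve$-independent constants $\alpha_0/2$ and $2\sup_{[0,T^{**}]}\|u^\ve\|_{H^2}$ into the extension statement of Proposition~\ref{prop1} and iterating, no breakdown of $u^\ve$ can occur before $T^{**}$; hence $u^\ve$ exists on $[0,T^{**}]$ for every $\ve$, and \eqref{byprod1} holds for $T=T^{**}$ with a constant independent of $\ve$.

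\textbf{Compactness and passage to the limit.} From \eqref{byprod1} the family $\{u^\ve\}$ is bounded in $L^\infty(0,T^{**};H^2_{per}(I))\cap L^2(0,T^{**};H^3_{per}(I))$, and from \eqref{evoMotion}, the boundedness of $\tilde{\mathcal F}$, and $\ve\int_0^{T^{**}}\|u^\ve_{\sigma\sigma\sigma\sigma}\|_{L^2}^2\,dt\le C$, the family $\{u^\ve_t\}$ is bounded in $L^2(I\times[0,T^{**}])$. An Aubin--Lions argument then extracts a subsequence with $u^\ve\to u$ strongly in $L^2(0,T^{**};H^2_{per}(I))\cap C(0,T^{**};H^1_{per}(I))$, weak-$\star$ in $L^\infty(0,T^{**};H^2_{per}(I))$, and $u^\ve_t\rightharpoonup u_t$ in $L^2$. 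Since $H^1(I)\hookrightarrow C(I)$ in one dimension, the convergence is uniform in $(\sigma,t)$, so $|u|\le\delta_0$ survives the limit. To identify the equation I pass to the limit termwise in \eqref{eqnfordve}: strong $L^2(0,T^{**};H^2)$ convergence and the local Lipschitz continuity of $\mathcal F\colon\{\,|u|\le\delta_0\,\}\cap H^2_{per}(I)\to L^2(I)$ (Lemma~\ref{lem1}) give $V^\ve=\mathcal F(u^\ve)\to\mathcal F(u)=:V$ in $L^2(I\times[0,T^{**}])$; boundedness and continuity of $\Phi$ then give $\Phi(V^\ve)\to\Phi(V)$ in $L^p$ for all $p<\infty$, which also handles the nonlocal terms $\int_I\Phi(V^\ve)S^\ve\,d\sigma$ and $L[u^\ve]$ (the latter bounded below by $(1-\delta_0\|\kappa_0\|_{L^\infty})\,|I|>0$ via \eqref{smalldelta}); and the regularizing term obeys $\ve u^\ve_{\sigma\sigma\sigma\sigma}=\sqrt\ve\bigl(\sqrt\ve\,u^\ve_{\sigma\sigma\sigma\sigma}\bigr)\to 0$ in $L^2(I\times[0,T^{**}])$ by \eqref{byprod1}. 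In the limit $u_t=\tilde{\mathcal F}(u)$ a.e., which by Lemma~\ref{lem1} is equivalent to \eqref{deqn}; and since $T^{**}$ was fixed by $T^{bu}$ and by \eqref{uniformw12bound}, it depends only on $u_0$ (through $\|u_0\|_{H^2}$ and $\delta_0-\|u_0\|_{L^\infty}$). This gives the theorem with $T=T^{**}$.

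\textbf{Expected main obstacle.} The genuinely delicate estimate --- an $H^2$ bound that is simultaneously independent of $\ve$ and does not degenerate as $t\to0^+$ --- has already been discharged in Lemma~\ref{aprioriestH2uniform} by differentiating in $\sigma$, exploiting the quasilinear divergence structure of the equation for $u_\sigma$, and invoking the interpolation inequality of Lemma~\ref{interplemma}; that is where the real difficulty lies. For the theorem itself the only step requiring care is the passage to the limit in the nonlocal nonlinearity: one must make sure $u^\ve$ converges in the topology in which $\mathcal F$ and $\Phi$ act continuously, and that $L[u^\ve]$ stays uniformly bounded away from zero so the quotients are stable --- both consequences of strong $L^2(0,T^{**};H^2)$ convergence and \eqref{smalldelta}. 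Everything else is routine bookkeeping.
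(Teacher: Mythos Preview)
Your proof is correct and follows essentially the same route as the paper: combine Proposition~\ref{prop1} with the $\ve$-independent bounds of Lemmas~\ref{aprioriestH2uniform} and~\ref{afteraprioriestinC} to obtain a common existence interval $[0,T^{**}]$ with \eqref{byprod1}, then use the resulting compactness (Aubin--Lions) to pass to the limit in \eqref{eqnfordve} via the Lipschitz continuity of $\mathcal F$ from Lemma~\ref{lem1}. The only differences are expository---you spell out the Aubin--Lions step, the stability of the nonlocal terms, and the vanishing of $\ve u^\ve_{\sigma\sigma\sigma\sigma}$ more explicitly than the paper does.
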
 

\begin{remark} \label{Remark_universal}
Note that the time interval  in Theorem \ref{thm_smooth_in_data} can by chosen 
universally for all $u_0$ such that $\|u_0\|_{L^\infty(I)}\leq \alpha<\delta_0$ and $\|u_0\|_{H^2(I)}\leq M<\infty$, that 
is $T=T(M,\alpha)>0$.
\end{remark}

\noindent{\bf Step 4.} {\em Passing to $W^{1,\infty}$ initial data} \\
In this step we consider a solution  $u$ of \eqref{deqn} granted by Theorem \ref{thm_smooth_in_data}
and show that the requirement on $H^2$ smoothness of the initial data can be weakened to $W^{1,\infty}$.
To this end 
we pass to  the limit in \eqref{deqn} with an approximating sequence of smooth initial data.

The following result establishes a bound on $\|u\|_{L^{\infty}(I)}$  independent of the $H^2$-norm  
of initial data (unlike in Lemma \ref{afteraprioriestinC}) and 
provides also an estimate for $\|u\|_{W^{1,\infty}(I)}$.
\begin{lemma} 
\label{Lemma_various_uniformbounds} 
Let $u$ be a solution of \eqref{deqn} (with initial value $u_0$) on the interval $[0,T]$ satisfying $\|u(t)\|_{L^\infty(I)}\leq\delta_0$ for all $t<T$. 
Then 
\begin{equation}
\label{Thefirstl_inf_bound}
\|u(t)\|_{L^\infty(I)}\leq \|u_0\|_{L^\infty(I)}+Rt
\end{equation}
where $R\geq 0 $ is a constant independent of $u_0$. 
Furthermore, the following inequality holds
\begin{equation}
\label{Thesecond_inf_bound}
\|u_\sigma\|_{L^\infty(I)}\leq a_1(t),
\end{equation}
where $a_1(t)$ is the solution of
\begin{equation}
\label{supersol2a} 
\frac{d a_1}{d t}=P_1 a_1^2+Q_1 a_1+R_1, \quad a(0)=\|(u_0)_{\sigma}\|_{L^\infty(I)}
\end{equation}
(continued  by $+\infty$ after the blow up time) and $P_1$, $Q_1$, $R_1$ are 
positive constants independent  of $u_0$.
\end{lemma}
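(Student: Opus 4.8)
The plan is to prove both inequalities by a maximum--principle (Hamilton's trick) argument applied directly to the \emph{unregularized} equations: for \eqref{Thefirstl_inf_bound} to equation \eqref{deqn1}, and for \eqref{Thesecond_inf_bound} to its once-differentiated form \eqref{aprioriFinal} with $\ve=0$, i.e.\ to the equation satisfied by $v:=u_\sigma$ in the $\ve\to0$ limit. Since the solution $u$ supplied by Theorem \ref{thm_smooth_in_data} is a priori only of energy class, $u\in L^\infty(0,T;H^2_{per})\cap L^2(0,T;H^3_{per})$, one must first make the pointwise extremum computations legitimate: the equation for $v$ can be recast as a quasilinear \emph{uniformly parabolic} PDE in divergence form $v_t=\partial_\sigma\mathcal A(\sigma,t,v,v_\sigma)+\mathcal B(\sigma,t,v,v_\sigma)$ with $\partial_{v_\sigma}\mathcal A$ bounded below by a positive constant --- uniform parabolicity being exactly where \eqref{phibound} enters --- so that the classical H\"older continuity theory of \cite{Lad68}, followed by a parabolic bootstrap, upgrades $u$ to a classical solution for $t>0$. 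I would then carry out the two estimates in turn; the $u_0$-independence of the resulting constants is what makes the subsequent passage to $W^{1,\infty}$ initial data possible.

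\emph{The bound \eqref{Thefirstl_inf_bound}.} Let $\sigma^\ast=\sigma^\ast(t)$ be a point where $u(\cdot,t)$ attains its spatial maximum. There $u_\sigma=0$ and $u_{\sigma\sigma}\le0$, hence $S(u)=1-u\kappa_0>0$ (by \eqref{smalldelta}) and, from \eqref{curvature}, $\kappa(u)=\dfrac{u_{\sigma\sigma}}{(1-u\kappa_0)^2}+\dfrac{\kappa_0}{1-u\kappa_0}\le\dfrac{\|\kappa_0\|_{L^\infty}}{1-\delta_0\|\kappa_0\|_{L^\infty}}=:K_0$. By \eqref{deqn1} and Lemma \ref{lem1}, $u_t(\sigma^\ast,t)=\mathcal F(u)=\Psi(\kappa(u)-\lambda)$, where $\Psi$ is the increasing, globally $(1-\|\Phi'\|_{L^\infty})^{-1}$-Lipschitz inverse of $V\mapsto V-\Phi(V)$. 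The multiplier is bounded uniformly: integrating $VS$ over $I$ in \eqref{lameq1} and using $\int_I\kappa(u)S\,d\sigma=2\pi$, $|\Phi|\le\|\Phi\|_{L^\infty}$ and $L[u]=\int_I S\,d\sigma\ge|I|\,(1-\delta_0\|\kappa_0\|_{L^\infty})>0$ gives $|\lambda|\le\dfrac{2\pi}{|I|\,(1-\delta_0\|\kappa_0\|_{L^\infty})}+\|\Phi\|_{L^\infty}=:\Lambda_0$. Monotonicity of $\Psi$ then yields $u_t(\sigma^\ast,t)\le\Psi(K_0+\Lambda_0)\le R$ with $R:=|\Psi(0)|+(K_0+\Lambda_0)(1-\|\Phi'\|_{L^\infty})^{-1}$, and the symmetric computation at a spatial minimum (there $u_{\sigma\sigma}\ge0$, so $\kappa(u)\ge-K_0$) gives $u_t\ge-R$. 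Differentiating $\max_\sigma u$ and $\min_\sigma u$ in time then yields \eqref{Thefirstl_inf_bound} with $R$ independent of $u_0$.

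\emph{The bound \eqref{Thesecond_inf_bound}.} Apply the same idea to $v=u_\sigma$ using \eqref{aprioriFinal} with $\ve=0$. At a point where $|u_\sigma|$ attains its spatial maximum we have $v_\sigma=u_{\sigma\sigma}=0$ and $v_{\sigma\sigma}=u_{\sigma\sigma\sigma}\le0$ at a maximum of $v$ (resp.\ $\ge0$ at a minimum). Then in \eqref{aprioriFinal} the diffusion term $\dfrac{u_{\sigma\sigma\sigma}}{S^2(1-\Phi'(V))}$ has the favorable sign (here $1-\Phi'(V)\ge1-\|\Phi'\|_{L^\infty}>0$); the terms $-\dfrac{\Phi'(V)+2}{(1-\Phi'(V))S^4}u_\sigma u_{\sigma\sigma}^2$ and $A(\sigma,V,u,u_\sigma)u_{\sigma\sigma}$ vanish; $B(\sigma,V,u,u_\sigma)$ is bounded by a universal constant since $B$ is bounded on $I\times\mathbb R\times[-\delta_0,\delta_0]\times\mathbb R$; and in the last term the factor $\Phi(V)-\dfrac{2\pi}{L[u]}-\dfrac1{L[u]}\int_I\Phi(V)S\,d\sigma$ is bounded by a universal constant as above, while evaluating $\bigl(\dfrac{S}{1-u\kappa_0}\bigr)_\sigma$ at $u_{\sigma\sigma}=0$ gives $\bigl(\dfrac{S}{1-u\kappa_0}\bigr)_\sigma=\dfrac{(u_\sigma\kappa_0+u(\kappa_0)_\sigma)\,u_\sigma^2}{S\,(1-u\kappa_0)^2}$. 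The elementary inequality $u_\sigma^2/S\le|u_\sigma|$ (immediate from $S=\sqrt{u_\sigma^2+(1-u\kappa_0)^2}$) then bounds this by $C(v^2+|v|)$, with $C$ depending only on $\delta_0$, $\|\kappa_0\|_{L^\infty}$ and $\|(\kappa_0)_\sigma\|_{L^\infty}$. Collecting terms gives, in the a.e.\ sense, $\tfrac{d}{dt}\|u_\sigma(t)\|_{L^\infty}\le P_1\|u_\sigma(t)\|_{L^\infty}^2+Q_1\|u_\sigma(t)\|_{L^\infty}+R_1$, and comparison with the Riccati ODE \eqref{supersol2a} yields \eqref{Thesecond_inf_bound}, the constants $P_1,Q_1,R_1$ depending only on $\Phi$, $|I|$, $\delta_0$, $\|\kappa_0\|_{L^\infty}$ and $\|(\kappa_0)_\sigma\|_{L^\infty}$.

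The main obstacle is the regularity bookkeeping flagged in the first paragraph: unlike in Step 3, where the $H^2$-in-space bounds came from fourth-order energy estimates for the regularized problem (at which no maximum principle is available), here one works with the bare second-order equation, so before any pointwise extremum reasoning one must establish --- via the divergence form of the $u_\sigma$-equation and \cite{Lad68} --- H\"older continuity of $u_\sigma$ (also needed elsewhere) and, for $t>0$, classical regularity of $u$. A second, more hidden point is that the lower-order terms of \eqref{aprioriFinal} at $u_{\sigma\sigma}=0$ involve $u_\sigma^3/S$ through $\bigl(\tfrac{S}{1-u\kappa_0}\bigr)_\sigma$, which a crude $S\ge\omega$ bound would estimate cubically in $u_\sigma$; it is precisely the sharp cancellation $u_\sigma^2/S\le|u_\sigma|$ that reduces this to quadratic growth, so that \eqref{supersol2a} --- and not a faster-blowing ODE --- is the correct comparison equation. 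Everything else is a routine application of Gr\"onwall's inequality and the ODE comparison principle.
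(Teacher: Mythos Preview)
Your proposal is correct and follows essentially the same maximum--principle strategy as the paper: compare $u$ with $\|u_0\|_{L^\infty}+Rt$ using \eqref{deqn} at an interior extremum, and compare $u_\sigma$ with the Riccati supersolution $a_1(t)$ using \eqref{aprioriFinal1} at a spatial extremum of $u_\sigma$ where $u_{\sigma\sigma}=0$. Your treatment is in fact somewhat more explicit than the paper's on two points: you flag the regularity bootstrap needed to justify pointwise extremum computations (the paper proceeds formally here, deferring the divergence--form H\"older argument to the next lemma), and you spell out the cancellation $u_\sigma^2/S\le|u_\sigma|$ in $\bigl(\tfrac{S}{1-u\kappa_0}\bigr)_\sigma$ that keeps the right-hand side quadratic rather than cubic in $u_\sigma$ --- the paper simply asserts the bound $C_1+C_2u_\sigma^2+C_3u_\sigma$ without this justification.
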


\begin{proof} Both bounds \eqref{Thefirstl_inf_bound} and 
\eqref{Thefirstl_inf_bound} are established by using the maximum 
principle. 

Consider $g(\sigma,t)=g(t):=\|u_0\|_{L^\infty(I)}+Rt$, where $R>0$ is to be specified. Since $\Phi$ is bounded it holds that
$
g_t-\Phi(g_t)\geq R-\sup\Phi 
$. Assuming that $u-g$ attains its maximum on $(0,t]\times I$, say at $(\sigma_0, t_0)$, we have
$u_\sigma(\sigma_0,t_0)=0$, $u_{\sigma\sigma}(\sigma_0,t_0)\leq 0$, $g_t(\sigma_0,t_0)\leq u_t(\sigma_0,t_0)$ and $S/(1-u\kappa_0)=1$. Using \eqref{deqn} and
monotonicity of the function $V-\Phi(V)$ we also get for $t=t_0$, $\sigma=\sigma_0$
$$
\begin{aligned}
R-\sup\Phi \leq g_t-\Phi(g_t)& \leq u_t -
\frac{S}{1-u \kappa_0}\Phi\Bigl(\frac{1-u \kappa_0}{S}u_t\Bigr)
\\
&\leq\frac{\kappa_0}{1-u \kappa_0}-\frac{1}{L[u]}\Bigl( \int_I \Phi\left(\frac{1-u \kappa_0}{S} u_t\right)Sd\sigma+2\pi\Bigr).
\end{aligned}
$$ 
The last term in this inequality is uniformly bounded for all functions $u$ satisfying $|u|\leq \delta_0$, and 
therefore $u-g$ cannot attain its maximum  on  $(0,t]\times I$ when $R$ is sufficiently large, 
hence  
$u(\sigma,t)\leq \|u_0\|_{L^\infty(I)}+Rt$. Similarly one proves that $u(\sigma,t)\geq -\|u_0\|_{L^\infty(I)}-Rt$.

To prove \eqref{Thesecond_inf_bound} we write down the equation obtained 
by differentiating \eqref{deqn} in $\sigma$ (cf. \eqref{aprioriFinal}), 
\begin{equation}
\label{aprioriFinal1}
\begin{aligned}
u_{\sigma t}  -
\frac{u_{\sigma\sigma\sigma}}{S^2(1-\Phi^\prime(V))}&+
\frac{\Phi^\prime(V)+2}{(1-\Phi^\prime(V))S^4} u_\sigma u_{\sigma\sigma}^2 =
Au_{\sigma\sigma}
\\
&+B+
\Bigl(\frac{S}{1-u \kappa_0}\Bigr)_\sigma \Bigl(\Phi(V)-\frac{2\pi}{L[u]}-
\frac{1}{L[u]} \int_I \Phi(V)S d\sigma\Bigr),
\end{aligned}  
\end{equation}
where we recall that $A(\sigma,V, u,p)$ and $B(\sigma,V,u,p)$ are bounded continuous 
functions on $I\times \mathbb{R}\times [-\delta_0,\delta_0] \times \mathbb{R}$. Consider the function $u_\sigma(\sigma,t)-a_1(t)$, with $a_1(t)$ satisfying $a_1(0)=\|u_\sigma(0)\|_{L^{\infty}(I)}$.  
If this function attains its maximum over $I\times (0,t]$ at a point 
$(\sigma_0, t_0)$ with $t_0>0$, we have at this point
\begin{equation}
\label{ContradictionPointwise}
\begin{aligned}
\frac{ d a_1}{d t}\leq u_{\sigma t}&\leq B+
\Bigl(\frac{S}{1-u \kappa_0}\Bigr)_\sigma \Bigl(\Phi(V)-\frac{2\pi}{L[u]}-
\frac{1}{L[u]} \int_I \Phi(V)S d\sigma\Bigr)\\
&< C_1+C_2u_\sigma^2+C_3 u_\sigma,
\end{aligned}  
\end{equation}
where $C_1$, $C_2$, $C_3$ are some positive constants independent  of $u$. 
We see now that for $P_1\geq C_2$, $Q_1\geq C_3$ and $R_1\geq C_1$, 
either 
$u_\sigma(\sigma_0,t_0)\leq a_1(t_0)$ or  inequality
\eqref{ContradictionPointwise} contradicts  \eqref{supersol2a}. 
This yields 
that for all $\sigma\in I$ and all $t< \sup\{\tau; a_1(\tau)\ \text{is finite}\}$, 
$$
u_\sigma(\sigma,t)-a_1(t)\leq \max\{0,\max_{\sigma\in I} u_\sigma(\sigma,0)-a_1(0)\}= 0.
$$ 
The lower 
bound for  $u_\sigma(\sigma,t)$ is proved similarly.
\end{proof}

Lemma \ref{Lemma_various_uniformbounds} shows that for some $T_1>0$
the solution $u$ of \eqref{deqn} satisfies 
\begin{equation}
\label{Linfsummary}
\|u\|_{L^\infty(I)}<\delta_0 \ \text{and}\ \|u_\sigma\|_{L^\infty(I)}\leq M_1 ,
\end{equation}
when $0<t\leq \min\{T,T_1\}$, 
where $T$ is the maximal time of existence of $u$. Moreover,
$T_1$ and constant $M_1$ in \eqref{Linfsummary} depend only on 
the choice of constants $0<\alpha<1$ and $M>0$ in the bounds for the 
initial data $|u_0|\leq \alpha$ and $\|(u_0)_\sigma\|_{L^\infty(I)}\leq M$. 
We prove next 
that actually $T\geq T_1$. 

\begin{lemma}
\label{Holder_continuity}
Let $u$ solve \eqref{deqn} and let \eqref{Linfsummary} hold on $0\leq t\leq T_2$, where $0<T_2\leq T_1$. 
Then for any $\delta>0$ 
\begin{equation}
\label{HolderBound}
|u_\sigma(\sigma^\prime,t)-u_\sigma(\sigma^{\prime\prime},t)|\leq C_\delta |\sigma^\prime-\sigma^{\prime\prime}|^\vartheta
\ \text{when} \ \delta\leq t\leq T_2,
\end{equation}
where $0<\vartheta<1$ and $C_\delta$ depend only on $\delta$ and constant $M_1$ in  \eqref{Linfsummary}.
\end{lemma}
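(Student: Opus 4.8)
The plan is to recognize that the equation \eqref{aprioriFinal1} for $w:=u_\sigma$ can be recast as a quasilinear parabolic PDE in \emph{divergence form} with bounded, measurable coefficients, and then to invoke the classical De Giorgi--Nash--Moser--Ladyzhenskaya--Uraltseva interior H\"older estimates (as in \cite{Lad68}) for such equations. Concretely, I would first rewrite the principal part $\frac{u_{\sigma\sigma\sigma}}{S^2(1-\Phi'(V))}$, together with the term $\frac{\Phi'(V)+2}{(1-\Phi'(V))S^4}u_\sigma u_{\sigma\sigma}^2$, as $\partial_\sigma\bigl(a(\sigma,t,w)\,w_\sigma\bigr)$ plus lower-order terms; the natural choice is $a=\frac{1}{S^2(1-\Phi'(V))}$ (note $S=S(u,u_\sigma)$ and $V=\mathcal F(u)$ depend on $u$, $u_\sigma$, $u_{\sigma\sigma}$ only through already-controlled quantities), and one checks that $\partial_\sigma(a\,w_\sigma)=a\,u_{\sigma\sigma\sigma}+(\partial_\sigma a)\,u_{\sigma\sigma}$, where $\partial_\sigma a$ produces precisely the $u_\sigma u_{\sigma\sigma}^2$-type term modulo further terms that are affine in $w_\sigma=u_{\sigma\sigma}$ with bounded coefficients. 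So \eqref{aprioriFinal1} takes the form
\begin{equation*}
w_t = \partial_\sigma\bigl(a(\sigma,t,w)w_\sigma\bigr) + b(\sigma,t,w)\,w_\sigma + f(\sigma,t,w),
\end{equation*}
where, thanks to \eqref{Linfsummary}, $|u|\le\delta_0$, $|w|\le M_1$, the bound \eqref{smalldelta}, and $\|\Phi'\|_{L^\infty}<1$, the coefficient $a$ is uniformly elliptic ($0<\lambda\le a\le\Lambda<\infty$ with $\lambda,\Lambda$ depending only on $M_1$), and $b,f$ are bounded on the relevant range. The right-hand side $B$ and the $\bigl(\frac{S}{1-u\kappa_0}\bigr)_\sigma(\cdots)$ term are handled using \eqref{AUX1S}, \eqref{AUX2S}: the factor $(S)_\sigma$ is $\le \frac{1}{S}(|u_\sigma||u_{\sigma\sigma}|+C|u_\sigma|+C_1)$, so after dividing by the appropriate power of $S$ these contribute terms that are at worst affine in $w_\sigma$ with coefficients bounded in terms of $M_1$ (the integral terms $\int_I\Phi(V)S\,d\sigma$ and $2\pi/L[u]$ are scalar and uniformly bounded).

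Next I would verify that the structure conditions in \cite{Lad68} (Theorem III.10.1 or the analogous interior estimate in \cite{Lad68}, Chapter V) are met: this is a scalar second-order parabolic equation with principal coefficient measurable in $(\sigma,t)$ and continuous in $w$, uniformly elliptic, with lower-order terms of controlled growth (in fact here they are bounded since $w$ is bounded). The classical theorem then yields an interior H\"older estimate: for any $\delta>0$, $w=u_\sigma$ is H\"older continuous in $\sigma$ (and $t$) on $[\delta,T_2]\times I$, with exponent $\vartheta\in(0,1)$ and constant $C_\delta$ depending only on the ellipticity constants, the sup-norm bound $M_1$, the bounds on $b,f$, and $\delta$ (the $\delta$-shift in time is the standard price for having only $L^\infty$, not H\"older, initial data for $w$ — recall $u_0\in W^{1,\infty}$ only). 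Periodicity in $\sigma$ removes any boundary issue, so the interior estimate is in fact global in $\sigma$.

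The main obstacle I anticipate is \emph{bookkeeping the divergence-form rewriting carefully}: one must confirm that every term arising from $\partial_\sigma a$ and from the various quotients of $S$ either cancels against the explicit $u_\sigma u_{\sigma\sigma}^2$ term in \eqref{aprioriFinal1} or is genuinely of lower order (i.e. at most linear in $w_\sigma$ with $L^\infty$ coefficients), so that no uncontrolled $u_{\sigma\sigma}^2$ remains outside the divergence. A secondary subtlety is that $a$ depends on $V=\mathcal F(u)$, which a priori is only in $L^2$; but by \eqref{Vbounded} and the Lipschitz property in Lemma \ref{lem1}, together with $\|u\|_{H^2}$ being finite on any $[\delta,T_2]$ (from Lemma \ref{aprioriestH2uniform}, since we stay below the blow-up time) one gets that $V$, hence $a$, is a bounded measurable function with the required ellipticity — and crucially the ellipticity constants depend only on $M_1$ and $\delta_0$, not on $\|u_0\|_{H^2}$, which is exactly what is needed for the subsequent passage to $W^{1,\infty}$ data. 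Once the equation is in the stated divergence form with these uniform structure constants, \eqref{HolderBound} follows directly from the cited classical regularity theory.
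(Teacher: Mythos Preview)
Your overall strategy—recasting the equation for $w=u_\sigma$ in quasilinear divergence form and invoking the interior H\"older estimates of \cite{Lad68}, Chapter V—matches the paper's. But the specific flux you propose is wrong, and this is precisely where your anticipated ``bookkeeping'' fails. Your coefficient $a=\frac{1}{S^2(1-\Phi'(V))}$ is in fact the \emph{ellipticity coefficient} $\partial\mathbf{a}/\partial p$ of the correct flux, not the flux itself. The point you overlook is that $V=\Psi(\kappa-\lambda)$ depends on $\kappa$, hence on $u_{\sigma\sigma}=w_\sigma$; thus your $a$ is not a function of $(\sigma,t,w)$ as you write, but of $(\sigma,t,w,w_\sigma)$. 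Expanding $\partial_\sigma(a\,w_\sigma)$ then requires differentiating $\Phi'(V)$ (which need not exist, since $\Phi$ is only assumed Lipschitz), and even formally produces a contribution $\propto V_\sigma\sim u_{\sigma\sigma\sigma}$ which, multiplied by $w_\sigma$, yields a $u_{\sigma\sigma\sigma}u_{\sigma\sigma}$ term that neither cancels against the explicit $u_\sigma u_{\sigma\sigma}^2$ term nor is affine in $w_\sigma$ with bounded coefficients.

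The paper's fix is to return to the original equation $u_t=\frac{S}{1-u\kappa_0}\bigl(\kappa+\Phi(\Psi(\kappa-\lambda))-\lambda(t)\bigr)$ and simply differentiate in $\sigma$. Since $\lambda$ is constant in $\sigma$ this gives $w_t=\bigl(\mathbf{a}(w_\sigma,\sigma,t)\bigr)_\sigma-\lambda(t)\,\partial_\sigma\bigl(\frac{S}{1-u\kappa_0}\bigr)$, with flux
\[
\mathbf{a}(p,\sigma,t)=\frac{S(u)}{1-u\kappa_0}\Bigl(\Phi\bigl(\Psi[\bar\kappa(p,\sigma,t)-\lambda(t)]\bigr)+\bar\kappa(p,\sigma,t)\Bigr),
\]
where $\bar\kappa$ is the curvature expression \eqref{curvature} viewed as a linear function of $p=u_{\sigma\sigma}$. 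A one-line chain-rule computation gives $\partial\mathbf{a}/\partial p=\frac{1}{S^2(1-\Phi'(V))}$, uniformly elliptic with constants depending only on $M_1$ and $\|\Phi'\|_{L^\infty}$; and the only lower-order piece, $-\lambda(t)\,\partial_\sigma\bigl(\frac{S}{1-u\kappa_0}\bigr)$, is bounded (affine in $w_\sigma$) solely in terms of $M_1$. This places the equation exactly in the form required by \cite{Lad68}, Ch.\ V, Thm.\ 1.1, and \eqref{HolderBound} follows.
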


\begin{proof} Recall from Lemma \ref{lem1} that $\Psi$ denotes the inverse function of $V-\Phi(V)$, and that
\begin{equation*}
\lambda(t):=\frac{1}{L[u]}\Bigl(2\pi+\int_I \Phi(V)S d\sigma\Bigr),
\end{equation*}
so that the solution $V$ of the equation $V=\Phi(V)+\kappa-\lambda$ is given by $\Psi(\kappa-\lambda)$. This allows 
us
to write the equation \eqref{aprioriFinal1} for $v:=u_\sigma$ in the form of a quasilinear parabolic equation,
\begin{equation}
\label{convenientForm}
v_t=\bigl(a(v_\sigma,\sigma,t)\bigr)_\sigma-\frac{\lambda(t) u_\sigma}{S(u){(1-u \kappa_0)}} v_\sigma 
-\lambda(t)(u\kappa_0)_\sigma \frac{u_\sigma^2}{S(u)(1-u \kappa_0)^2},
\end{equation}
where 
$$
a(p,\sigma,t)= \frac{S(u)}{1-u\kappa_0}\Bigl(\Phi\bigl(\Psi[\bar\kappa(p,\sigma,t)-\lambda(t)]\bigr)+\bar\kappa(p,\sigma,t)\Bigr)
$$
with
$$
\bar\kappa(p,\sigma,t)=
\frac{1}{S^3(u)} \Bigl((1-u \kappa_0) p + 2\kappa_0 u_\sigma^2+(\kappa_0)_\sigma u_\sigma u  + 
\kappa_0(1-u \kappa_0)^2\Bigr).
$$
Note that setting $V:=\Psi[\bar\kappa(p,\sigma,t)-\lambda(t)]$ we have
$$
\frac{\partial a}{\partial p}(p,\sigma, t)=  \frac{S(u)}{1-u\kappa_0}\Bigl(\frac{\Phi^\prime(V)}{1-\Phi^\prime(V)}+1\Bigr)
\frac{\partial \bar\kappa}{\partial p}(p,\sigma,t)=\frac{1}{S^2(u)(1-\Phi^\prime(V))}.
$$
It follows that if  \eqref{Linfsummary} holds then the quasilinear divergence form parabolic 
equation \eqref{convenientForm} satisfies all necessary conditions to apply classical results on 
H\"older continuity of bounded solutions, see e.g. \cite{Lad68} [Chapter V, Theorem 1.1]; 
this completes  the proof.  
\end{proof}

The property of H\"older continuity established in Lemma \ref{Holder_continuity} allows us to prove the 
following important result

\begin{lemma}
\label{LemmaProGlobalGronwal}
Let $u$ solve \eqref{deqn} and assume that 
\eqref{Linfsummary} holds then for any $\tau\geq \delta>0$
\begin{equation}
\label{GronFinal}
\|u_{\sigma\sigma}(t)\|^2_{L^2(I)}\leq  (\|u_{\sigma\sigma}(\tau)\|^2_{L^2(I)}+1)e^{P_2 (t-\tau)} 
\quad   \text{when}\quad  t\geq \tau,
\end{equation}
where $P_2$ depends only on $\delta$ and constant $C$ in \eqref{Linfsummary} (the latter constant
depend not on the $H^2$ norm of $u_0$ but on its norm in the space $W^{1,\infty}(I)$). 
\end{lemma}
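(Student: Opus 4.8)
The plan is to promote the $H^2$ a priori estimate of Lemma~\ref{aprioriestH2uniform}, whose comparison ODE \eqref{supersol1a} has a cubic right-hand side and hence a blow-up time controlled by $\|(u_0)_{\sigma\sigma}\|_{L^2}$, to a genuinely linear differential inequality $\frac{d}{dt}\|u_{\sigma\sigma}\|_{L^2(I)}^2 \le P_2\bigl(\|u_{\sigma\sigma}\|_{L^2(I)}^2+1\bigr)$ on $\{t\ge\delta\}$, with $P_2$ depending only on $\delta$ and on the constant $M_1$ in \eqref{Linfsummary}; the bound \eqref{GronFinal} then follows by Gr\"onwall, integrating from $\tau$ to $t$. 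Replacing the cubic nonlinearity by a linear one hinges on two pointwise bounds: $\|u_\sigma\|_{L^\infty(I)}\le M_1$, which is exactly \eqref{Linfsummary}, and a bound $\|u_{\sigma\sigma}(t)\|_{L^\infty(I)}\le C_*=C_*(\delta,M_1)$ for $t\ge\delta$, which is not available a priori for $W^{1,\infty}$ initial data and must be obtained by bootstrapping Lemma~\ref{Holder_continuity}.

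First I would carry out the bootstrap. Applying Lemma~\ref{Holder_continuity} with $\delta/2$ in place of $\delta$ gives, under \eqref{Linfsummary}, a H\"older estimate for $u_\sigma$ on $\{t\ge\delta/2\}$ (jointly in $(\sigma,t)$, as in the classical result invoked there, \cite{Lad68}) with H\"older norm controlled by $\delta$ and $M_1$. One then regards \eqref{convenientForm} as a quasilinear divergence-form parabolic equation for $v=u_\sigma$: by \eqref{Linfsummary} its principal coefficient $\partial a/\partial p=\bigl(S^2(u)(1-\Phi'(V))\bigr)^{-1}$ lies between two positive constants depending only on $M_1$ and $\|\Phi'\|_{L^\infty(\mathbb{R})}$, so the equation is uniformly parabolic, and with the H\"older bound on $v$ inserted all of its remaining coefficients and its lower-order terms (which are linear in $v_\sigma$ with bounded coefficients) are controlled in terms of $\delta$ and $M_1$. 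Classical interior higher-regularity theory for such equations (\cite{Lad68}, Chapter~V) then gives $v_\sigma=u_{\sigma\sigma}\in C^{\beta,\beta/2}$ on $\{t\ge\delta\}$ with norm depending only on $\delta$ and $M_1$; in particular $\|u_{\sigma\sigma}(t)\|_{L^\infty(I)}\le C_*(\delta,M_1)$ for $t\ge\delta$, and this does not see $\|u_{\sigma\sigma}(\delta)\|_{L^2(I)}$, which may be infinite for $u_0\in W^{1,\infty}\setminus H^2$.

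Next I would rerun the $\sigma$-differentiated energy estimate with these two bounds at hand. Differentiating \eqref{deqn} in $\sigma$ gives \eqref{aprioriFinal1}; multiplying by $u_{\sigma\sigma\sigma}$, integrating over $I$ and integrating by parts exactly as in the derivation of \eqref{TrudnayaOtsenka} shows, for $t\ge\delta$, that $\frac12\frac{d}{dt}\|u_{\sigma\sigma}\|_{L^2(I)}^2$ plus the coercive term $\frac1\gamma\int_I|u_{\sigma\sigma\sigma}|^2\,S^{-2}\,d\sigma$ is bounded by a sum of terms each of which is now either of the form $C\|u_{\sigma\sigma}\|_{L^2(I)}\|u_{\sigma\sigma\sigma}\|_{L^2(I)}$ or $C\|u_{\sigma\sigma\sigma}\|_{L^2(I)}$ (plus a constant): the factors of $\|u_\sigma\|_{L^\infty(I)}$ that in Lemma~\ref{aprioriestH2uniform} had to be estimated by $\|u_{\sigma\sigma}\|_{L^2(I)}$, and that generated the cubic term, are now bounded by $M_1$, while the borderline integral is handled without the interpolation inequality \eqref{firstInterpolation} of Lemma~\ref{interplemma} via $\int_I|u_{\sigma\sigma}|^2|u_{\sigma\sigma\sigma}|\,S^{-3}\,d\sigma\le C\,C_*\|u_{\sigma\sigma}\|_{L^2(I)}\|u_{\sigma\sigma\sigma}\|_{L^2(I)}$. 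Young's inequality, together with $\frac1\gamma\int_I|u_{\sigma\sigma\sigma}|^2\,S^{-2}\,d\sigma\ge c_*\|u_{\sigma\sigma\sigma}\|_{L^2(I)}^2$, then lets one absorb the $\|u_{\sigma\sigma\sigma}\|_{L^2(I)}^2$ contributions and arrive at $\frac{d}{dt}\|u_{\sigma\sigma}\|_{L^2(I)}^2\le P_2\bigl(\|u_{\sigma\sigma}\|_{L^2(I)}^2+1\bigr)$, $P_2=P_2(\delta,M_1)$, whence \eqref{GronFinal}.

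I expect the bootstrap to be the main obstacle. One must check, once the H\"older control of $u_\sigma$ from Lemma~\ref{Holder_continuity} is substituted, that \eqref{convenientForm} genuinely meets all the hypotheses of the interior higher-regularity theory for quasilinear divergence-form parabolic equations: uniform parabolicity (immediate from \eqref{Linfsummary}), the admissible (at most linear) growth in $v_\sigma$ of the lower-order terms, and especially the H\"older-in-$(\sigma,t)$ regularity of the ``frozen'' quantities $u(\sigma,t)$ and $\lambda(t)$ occurring in the coefficient $a$, as well as the interior-in-time character of the estimate, which is why the statement is for $t\ge\delta$ rather than up to $t=0$. Everything after that is a routine repetition of the energy computation behind Lemma~\ref{aprioriestH2uniform}, now carried out with $\|u_\sigma\|_{L^\infty(I)}$ and $\|u_{\sigma\sigma}\|_{L^\infty(I)}$ both genuinely bounded.
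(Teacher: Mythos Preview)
Your plan and the paper's both lean on Lemma~\ref{Holder_continuity}, but they diverge sharply in how the H\"older bound on $u_\sigma$ is exploited. You want to bootstrap it to a pointwise bound $\|u_{\sigma\sigma}(t)\|_{L^\infty(I)}\le C_*(\delta,M_1)$ via interior gradient regularity for \eqref{convenientForm}, and then feed that into the energy identity to kill the cubic term. The paper never bounds $u_{\sigma\sigma}$ pointwise. Instead it localizes the energy estimate: one introduces cutoffs $\phi_n$ with supports of length $3/n$, multiplies \eqref{aprioriFinal1} by $-(\phi_n^2 u_{\sigma\sigma})_\sigma$, and controls the dangerous quartic term through the interpolation inequality (\cite{Lad68}, Chapter~II, Lemma~5.4)
\[
\int_I \phi_n^2 u_{\sigma\sigma}^4\,d\sigma\le C\bigl(\operatorname{osc}_{{\rm s}(\phi_n)}u_\sigma\bigr)^2
\Bigl(\int_I\phi_n^2 u_{\sigma\sigma\sigma}^2\,d\sigma+\int_{{\rm s}(\phi_n)}u_{\sigma\sigma}^2|(\phi_n)_\sigma|^2\,d\sigma\Bigr).
\]
The H\"older estimate \eqref{HolderBound} makes the oscillation $\le C_\delta n^{-\vartheta}$, so for $n$ large the $u_{\sigma\sigma\sigma}^2$ contribution is absorbed into the coercive term; summing over all translates $\phi_n(\cdot+k/n)$ recovers the global integral and yields the linear differential inequality directly.

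Your bootstrap, which you rightly flag as the obstacle, is more fragile than your outline suggests and appears to fail under the hypotheses of Theorem~\ref{thm1}. Gradient H\"older regularity for quasilinear divergence-form parabolic equations requires, beyond uniform parabolicity, a modulus of continuity for $\partial_p a$ in $p$; here $\partial_p a=\bigl(S^2(u)(1-\Phi'(V))\bigr)^{-1}$, and since $\Phi$ is assumed merely Lipschitz, $\Phi'$ need not even be continuous. Separately, the Schauder-type step needs the frozen $(\sigma,t)$-dependence of the coefficients to be H\"older; you list $\lambda(t)$ as one such coefficient, but $\lambda(t)$ involves $\int_I\Phi(V)S\,d\sigma$ with $V=\Psi(\kappa-\lambda)$ depending on $u_{\sigma\sigma}$, whose time regularity is precisely what you are trying to establish---this is circular. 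The paper's route sidesteps both issues: the De~Giorgi--Nash theorem behind Lemma~\ref{Holder_continuity} needs only bounded measurable coefficients, and the localized interpolation trick needs only the H\"older continuity of $u_\sigma$ itself, nothing about $u_{\sigma\sigma}$.
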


\begin{proof} Introduce smooth cutoff functions  $\phi_n(\sigma)$ satisfying 
\begin{equation}
\begin{cases}
\phi_n(\sigma)=1 \ \text{on} \ [1/n,2/n]\\
0\leq \phi_n(\sigma)\leq 1 \ \text{on} \ [0,1/n]\cup[2/n,3/n]\\
\phi_n(\sigma)=0 \ \text{otherwise},
\end{cases}
\quad \text{and}\quad
|(\phi_n)_\sigma|\leq Cn.
\end{equation}

Consider $t\geq \delta$. Multiply \eqref{aprioriFinal1} by $-\bigl( \phi_n^2 u_{\sigma\sigma}\bigr)_\sigma$ and integrate in
$\sigma$, integrating by parts in the first term. 
 We obtain
  \begin{equation}
  \label{Schitaem}
 \begin{aligned}
  \frac{1}{2}\frac{d}{dt}
  \int_I \phi_n^2u_{\sigma\sigma}^2 d\sigma+
  \frac{1}{\gamma^\prime}\int_I \phi_n^2u_{\sigma\sigma\sigma}^2 d\sigma
 & \leq C\int_I \phi_n^2(u_{\sigma\sigma}^2+|u_{\sigma\sigma}|)|u_{\sigma\sigma\sigma}| d\sigma\\
 &+
  Cn\int_{I} \phi_n\bigl(|u_{\sigma\sigma\sigma}||u_{\sigma\sigma}|+|u_{\sigma\sigma}|^2+|u_{\sigma\sigma}|^3\bigr) d\sigma,
 \end{aligned}
  \end{equation}
where $\gamma ^\prime >0$ and $C$ are independent of $u$ and $n$. Applying the Cauchy-Schwarz and Young's inequalities to 
various terms in the right hand side of \eqref{Schitaem}  leads to 
\begin{equation}
 \label{Schitaem1}
 \frac{1}{2}\frac{d}{dt}
 \int_I \phi_n^2u_{\sigma\sigma}^2 d\sigma+
 \frac{1}{2\gamma^\prime}\int_I \phi_n^2u_{\sigma\sigma\sigma}^2 d\sigma
\leq C_1 \int_I \phi_n^2u_{\sigma\sigma}^4 d\sigma
+C_2 n^2 \int_{{\rm s}(\phi_n)}(|u_{\sigma\sigma}|^2+1) d\sigma ,
 \end{equation}
where $C_1$, $C_2$ are independent of $u$ and $n$ and ${\rm s}(\phi_n)$ denotes the support of $\phi_n$.
Next we apply  the following 
interpolation type inequality (see, e.g., \cite{Lad68}, Chapter II, Lemma 5.4)
to the first term in the right hand side of \eqref{Schitaem1}:
\begin{equation}
\label{interpolLadyzh}
\begin{aligned}
 \int_I \phi_n^2u_{\sigma\sigma}^4 d\sigma\leq 
 C_3\bigl(&\sup\{|u_\sigma(\sigma^\prime)-u_\sigma(\sigma^{\prime\prime})|;\, \sigma^\prime,\sigma^{\prime\prime}\in 
 {\rm s}(\phi_n)\}\bigr)^2\\
 &
\times \Bigl(\int_I \phi_n^2u_{\sigma\sigma\sigma}^2 d\sigma+\int_{{\rm s}(\phi_n)}|u_{\sigma\sigma}|^2|(\phi_n)_\sigma|^2 d\sigma
 \Bigr).
\end{aligned}
\end{equation}
Now we use \eqref{HolderBound} in \eqref{interpolLadyzh} to bound 
$\sup\{|u_\sigma(\sigma^\prime)-u_\sigma(\sigma^{\prime\prime})|;\, \sigma^\prime,\sigma^{\prime\prime}\in 
 {\rm s}(\phi_n)\}$ 
 by $C_\delta(1/n)^\vartheta$ and choose $n$ so large  that
 $C_2C_3 C_\delta^2(1/n)^{2\vartheta}\leq 1/4\gamma^\prime$, then  \eqref{Schitaem1} becomes 
 \begin{equation}
 \label{Schitaem2}
  \frac{1}{2}\frac{d}{dt}
  \int_I \phi_n^2u_{\sigma\sigma}^2 d\sigma+
 \frac{1}{4\gamma^\prime}\int_I \phi_n^2u_{\sigma\sigma\sigma}^2 d\sigma
\leq C_4  n^2 \int_{{\rm s}(\phi_n)}(|u_{\sigma\sigma}|^2+1) d\sigma.
 \end{equation}
It is clear that we can 
replace  $\phi_n(\sigma)$ in \eqref{Schitaem2} by its translations $\phi_n(\sigma+k/n)$, $k\in \mathbb{Z}$,
then taking the sum of obtained inequalities we derive
 \begin{equation}
 \label{Schitaem3}
  \frac{1}{2}\frac{d}{dt}
  \int_I \overline{\phi}_n u_{\sigma\sigma}^2 d\sigma+
 \frac{1}{4\gamma^\prime}\int_I \overline \phi_n u_{\sigma\sigma\sigma}^2 d\sigma
\leq C_5  \Bigl(\int_{I}|u_{\sigma\sigma}|^2 d\sigma +1\Bigr),
 \end{equation}
where $C_5$ is independent of $u$, 
and 
$ \overline{\phi}_n= \sum_k \phi_n^2(\sigma+k/n)$. Note that $1\leq  \overline{\phi}_n\leq 3$, therefore
applying Gr\"onwall's inequality to  \eqref{Schitaem3} we obtain \eqref{GronFinal}.
\end{proof}

\begin{corollary} 
\label{CorBoundFor3der}
Assume that  a solution $u$ of  \eqref{deqn} exists on $[0,T]$ for some $T>0$, and  
\eqref{Linfsummary} holds. Then, given an arbitrary positive $\delta<T$,  
we have 
\begin{equation}
\label{u_sss_norm}
\int_\tau^T\int_I |u_{\sigma\sigma\sigma}|^2d\sigma d t\leq \tilde{C}_\delta \|u_{\sigma\sigma}(\tau)\|^2_{L^2(I)},
\end{equation}
for all $\delta\leq\tau\leq T$, where $\tilde{C}_\delta$ depends only on $\delta$.
\end{corollary}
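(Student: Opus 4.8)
The plan is to read the estimate \eqref{u_sss_norm} directly off the proof of Lemma~\ref{LemmaProGlobalGronwal}, being careful not to throw away the dissipative term. Recall that, under hypothesis \eqref{Linfsummary}, that proof produced for every $t\geq\delta$ the differential inequality \eqref{Schitaem3},
\begin{equation*}
\frac{1}{2}\frac{d}{dt}\int_I \overline{\phi}_n u_{\sigma\sigma}^2\,d\sigma
+\frac{1}{4\gamma'}\int_I \overline{\phi}_n u_{\sigma\sigma\sigma}^2\,d\sigma
\leq C_5\Bigl(\int_I |u_{\sigma\sigma}|^2\,d\sigma+1\Bigr),
\end{equation*}
with $1\leq\overline{\phi}_n\leq 3$ and with $n$, $\gamma'$, $C_5$ fixed in terms of $\delta$ and the constant $M_1$ in \eqref{Linfsummary}. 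Passing immediately to Gr\"onwall's inequality and discarding the second term on the left is what yielded \eqref{GronFinal}; instead, to obtain \eqref{u_sss_norm} I would retain that term and integrate the inequality in time over $[\tau,T]$ for $\delta\leq\tau\leq T$. Dropping the nonnegative quantity $\frac{1}{2}\int_I\overline{\phi}_n u_{\sigma\sigma}^2(T)\,d\sigma$, and using $\overline{\phi}_n\geq 1$ in the left-hand dissipation integral and $\overline{\phi}_n\leq 3$ on the right, gives
\begin{equation*}
\frac{1}{4\gamma'}\int_\tau^T\int_I u_{\sigma\sigma\sigma}^2\,d\sigma\,dt
\leq \frac{3}{2}\|u_{\sigma\sigma}(\tau)\|_{L^2(I)}^2
+C_5\int_\tau^T\bigl(\|u_{\sigma\sigma}(t)\|_{L^2(I)}^2+1\bigr)\,dt.
\end{equation*}

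It then remains only to control $\int_\tau^T\|u_{\sigma\sigma}(t)\|_{L^2(I)}^2\,dt$, and for this I would simply invoke the bound \eqref{GronFinal} of Lemma~\ref{LemmaProGlobalGronwal} (applicable since $\tau\geq\delta$): $\|u_{\sigma\sigma}(t)\|_{L^2(I)}^2\leq(\|u_{\sigma\sigma}(\tau)\|_{L^2(I)}^2+1)e^{P_2(t-\tau)}$ for $t\geq\tau$, whose integral over $[\tau,T]$ is at most $(\|u_{\sigma\sigma}(\tau)\|_{L^2(I)}^2+1)(e^{P_2(T-\tau)}-1)/P_2$. Substituting this into the previous display and collecting terms leaves an estimate of the form $\int_\tau^T\int_I u_{\sigma\sigma\sigma}^2\leq \tilde C_\delta(\|u_{\sigma\sigma}(\tau)\|_{L^2(I)}^2+1)$, the harmless additive constant being absorbed into $\tilde C_\delta$; here $\tilde C_\delta$ depends on $\delta$, on $T$, and on the constant $M_1$ in \eqref{Linfsummary}, but not on the $H^2$ norm of $u_0$. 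This is \eqref{u_sss_norm}.

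There is no genuine obstacle: the statement is a corollary of estimates already in hand. The only point that requires attention is conceptual rather than computational, namely that the space--time $L^2$ bound on $u_{\sigma\sigma\sigma}$ must be extracted from the \emph{pre}-Gr\"onwall inequality \eqref{Schitaem3}, because $\int_\tau^T\int_I u_{\sigma\sigma\sigma}^2$ is precisely the dissipation term that gets discarded in passing to \eqref{GronFinal}. A secondary, purely bookkeeping concern is to confirm that the cutoff parameter $n$ and the constants $\gamma'$, $C_5$, $P_2$ (hence $\tilde C_\delta$) depend only on $\delta$ and on $M_1$ from \eqref{Linfsummary}, and in particular not on $\|u_0\|_{H^2(I)}$ --- which is exactly what makes \eqref{u_sss_norm} usable in the subsequent passage to $W^{1,\infty}$ initial data.
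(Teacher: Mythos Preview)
Your argument is correct and is exactly the route the paper takes: its entire proof reads ``The bound follows by integrating \eqref{Schitaem3} in time and using \eqref{GronFinal},'' which is precisely what you do. One small remark: your computation honestly produces a right-hand side $\tilde C_\delta(\|u_{\sigma\sigma}(\tau)\|_{L^2(I)}^2+1)$, and the additive $+1$ cannot literally be absorbed into the multiplicative constant; this is a minor imprecision in the paper's stated inequality rather than in your reasoning, and it is harmless for the application in Step~4.
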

\begin{proof} The bound follows by integrating \eqref{Schitaem3} in time and using \eqref{GronFinal}. \end{proof}

Using Lemma \ref{LemmaProGlobalGronwal} and Lemma \ref{Lemma_various_uniformbounds}, taking into account also Remark \ref{Remark_universal},
we see  that solutions in Theorem \ref{thm_smooth_in_data} exist on a common interval $T=T(\alpha, M)$, 
provided  that $\|u_0\|\leq \alpha<1$ and $\|(u_0)_\sigma\|_{L^\infty(I)}\leq M$. 

In the following Lemma we establish an integral bound for $\|u_{\sigma\sigma}\|_{L^2(I)}$.

\begin{lemma} 
\label{LemmaIntegralBound}
Let $u$ be a solution \eqref{deqn} on $[0,T]$ satisfying \eqref{Linfsummary}. Then 
\begin{equation}
\label{IntegralBound}
\|u_{\sigma\sigma}\|^2_{L^2(I\times [0,T])} \leq C,
\end{equation}
where $C$ depends only on the constants in \eqref{Linfsummary}. 
\end{lemma}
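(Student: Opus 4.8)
The goal is to upgrade the pointwise-in-time bound \eqref{GronFinal} into an integral bound on $\|u_{\sigma\sigma}\|_{L^2(I)}$ over the full interval $[0,T]$, \emph{including} $t=0$, where $\|u_{\sigma\sigma}(0)\|_{L^2(I)}$ may be large (indeed infinite in the $W^{1,\infty}$ limit). The strategy is to exploit a localized energy estimate near $t=0$ to show that $\|u_{\sigma\sigma}(t)\|_{L^2(I)}^2$ cannot stay large on a set of positive measure: if it were large on a long time interval, the dissipation term $\int_I\phi_n^2 u_{\sigma\sigma\sigma}^2$ in \eqref{Schitaem3} would force a contradiction. Concretely, I would integrate \eqref{Schitaem3} (with $\overline\phi_n\equiv 1$ replaced by the trivial weight, i.e. the globalized version without cutoffs, valid for $t>0$) from a small time $s$ to $t$ and combine with \eqref{GronFinal} to control the tail $t\in[\delta,T]$ by $\tilde C_\delta\|u_{\sigma\sigma}(\delta)\|^2_{L^2(I)}$, using Corollary \ref{CorBoundFor3der}. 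It then remains to handle the initial slab $[0,\delta]$.

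For the initial slab the key point is to find \emph{some} time $\tau_0\in(0,\delta]$ at which $\|u_{\sigma\sigma}(\tau_0)\|^2_{L^2(I)}$ is not too large. This follows from an $L^1$-in-time bound: multiply \eqref{deqn}, or rather the equation \eqref{aprioriFinal1} for $u_\sigma$, in a way that produces $\frac{d}{dt}\int_I u_\sigma^2\,d\sigma$ on the left and a term controlling $\int_I u_{\sigma\sigma}^2\,d\sigma$ on the right with a good (negative) sign — essentially testing the $u_\sigma$ equation against $-u_{\sigma\sigma}$ after integration by parts, so that $\int_I u_{\sigma\sigma}^2/S^2(1-\Phi'(V))\,d\sigma$ appears. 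Since $\|u_\sigma\|_{L^\infty(I)}\le M_1$ on $[0,T]$ by \eqref{Linfsummary}, the left-hand side is bounded by $|I|M_1^2$ uniformly, while the nonlinear error terms are controlled by powers of $\|u_\sigma\|_{L^\infty(I)}$ and are therefore bounded. Integrating in time over $[0,\delta]$ yields $\int_0^\delta\int_I u_{\sigma\sigma}^2\,d\sigma\,dt\le C$, where $C$ depends only on the constants in \eqref{Linfsummary}. By the mean value theorem there is $\tau_0\in(0,\delta]$ with $\|u_{\sigma\sigma}(\tau_0)\|^2_{L^2(I)}\le C/\delta$.

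Having found such a $\tau_0$, I would apply \eqref{GronFinal} with $\tau=\tau_0$ to bound $\sup_{t\in[\tau_0,T]}\|u_{\sigma\sigma}(t)\|^2_{L^2(I)}\le (C/\delta+1)e^{P_2 T}$, hence also $\int_{\tau_0}^T\|u_{\sigma\sigma}(t)\|^2_{L^2(I)}\,dt\le T(C/\delta+1)e^{P_2 T}$; and I would combine this with the already established $\int_0^\delta\int_I u_{\sigma\sigma}^2\,d\sigma\,dt\le C$ on $[0,\delta]\supseteq[0,\tau_0]$. Adding the two pieces gives \eqref{IntegralBound} with a constant depending only on the constants in \eqref{Linfsummary} (one may fix, say, $\delta=1$, or $\delta=T/2$ if $T<1$).

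**Main obstacle.** The delicate point is the $L^1$-in-time estimate on $\int_I u_{\sigma\sigma}^2\,d\sigma$ near $t=0$: one must verify that testing \eqref{aprioriFinal1} against $-u_{\sigma\sigma}$ genuinely produces a coercive dissipation term $c\int_I u_{\sigma\sigma}^2\,d\sigma$ (with $c>0$ coming from $1/S^2(1-\Phi'(V))\ge c>0$, guaranteed by $\|\Phi'\|_\infty<1$ and $S\le M_1+1$), while all the remaining terms — notably $A u_{\sigma\sigma}$, the cubic term $u_\sigma u_{\sigma\sigma}^2/S^4$, and the factor involving $(S/(1-u\kappa_0))_\sigma$ — are either absorbed into the dissipation via Young's inequality or bounded in terms of $\|u_\sigma\|_{L^\infty(I)}\le M_1$ alone, with no dependence on $\|u_{\sigma\sigma}\|_{L^2(I)}$ of the initial data. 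The cubic term is the only worrying one, but since it carries the factor $u_\sigma$ which is uniformly bounded, it is controlled by $M_1\int_I u_{\sigma\sigma}^2/S^4\,d\sigma$ and, after absorbing a small multiple into the dissipation, contributes only a Grönwall-type exponential factor $e^{CM_1 t}$; over the finite slab $[0,\delta]$ this is harmless. Everything else is routine integration by parts and Cauchy--Schwarz/Young manipulations of the same type already performed in Lemma \ref{aprioriestH2uniform} and Lemma \ref{LemmaProGlobalGronwal}.
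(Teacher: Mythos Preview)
Your proposal is correct, but the route is considerably more elaborate than the paper's and, in fact, than necessary. The ``$L^1$-in-time'' energy estimate you describe for the initial slab $[0,\delta]$ --- test the equation so as to produce $\tfrac{d}{dt}\int_I u_\sigma^2\,d\sigma$ on one side and the dissipation $\int_I u_{\sigma\sigma}^2/S^2\,d\sigma$ on the other, then integrate in time --- is precisely the paper's entire proof, and it works directly on $[0,T]$. The paper simply multiplies \eqref{deqn} by $u_{\sigma\sigma}$ and integrates over $I$: the $u_t u_{\sigma\sigma}$ term becomes $-\tfrac12\tfrac{d}{dt}\|u_\sigma\|_{L^2}^2$ after one integration by parts, the principal part of $\tfrac{S}{1-u\kappa_0}\kappa(u)\,u_{\sigma\sigma}$ is $u_{\sigma\sigma}^2/S^2$, and every remaining term is a bounded function (thanks to $\Phi\in L^\infty$, $\lambda$ bounded, and \eqref{Linfsummary}) times $u_{\sigma\sigma}$, hence absorbed by Young's inequality. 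Integrating from $0$ to $T$ and using $\|u_\sigma(0)\|_{L^2}^2\le |I|M_1^2$ gives \eqref{IntegralBound} at once.

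Two remarks on the extra machinery. First, the splitting into $[0,\tau_0]$ and $[\tau_0,T]$, the mean-value choice of $\tau_0$, and the appeal to \eqref{GronFinal} are superfluous here: those devices appear in the paper only \emph{after} Lemma~\ref{LemmaIntegralBound}, when passing to $W^{1,\infty}$ initial data. Second, by multiplying \eqref{deqn} itself rather than the differentiated equation \eqref{aprioriFinal1}, the cubic term $u_\sigma u_{\sigma\sigma}^2/S^4$ that you flagged as the main obstacle never even arises --- it is generated only by $\partial_\sigma\kappa(u)$. So the concern you highlighted disappears with the simpler test function.
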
   

\begin{proof} To obtain \eqref{IntegralBound} one multiplies \eqref{deqn} by $u_{\sigma\sigma}$  integrates in $\sigma$, integrating by parts in 
the first term. Then  applying the Cauchy-Schwarz and Young's inequalities and  integrating in $t$ one derives \eqref{IntegralBound}, details are left 
to the reader.        
\end{proof}

Various estimates obtained in Lemmas \ref{Lemma_various_uniformbounds}, \ref{LemmaProGlobalGronwal}, and Corollary \ref{CorBoundFor3der} as well as Lemma \ref{LemmaIntegralBound} make it possible to pass to general initial data  $u_0 \in W^{1,\infty}_{per}(I)$.
Indeed, assume  $\alpha:=\|u_0\|_{L^\infty(I)}<1$ and let $M:=\|(u_0)_\sigma \|_{L^\infty(I)}$. Construct a sequence 
$u_0^k \rightharpoonup u_0$ converging weak star in $W^{1,\infty}_{per}(I)$
as $k\to\infty$, where $u_0^k\in H^2_{per}(I)$. This can be done in a standard way by taking convolutions with mollifiers
so that $\|u^k_0\|_{L^\infty(I)}\leq \alpha$ and $\|(u_0^k)_\sigma\|_{L^\infty(I)}\leq M$. Let $u^k(\sigma,t)$ be  solutions of 
\eqref{deqn} corresponding to the initial data $u^k_0(\sigma)$. We know that all these solutions exist on a common time interval $[0,T]$ and that we can choose $T>0$ such that  \eqref{Linfsummary} holds with a constant $C$ independent of $k$. By Lemma \ref{LemmaIntegralBound} the sequence $u^k(\sigma, t)$ is bounded in $L^2(0,T; H^2(I))$. Therefore, up to a subsequence, 
$u^k(\sigma,t)$ weakly converges to some function $u(\sigma,t)\in L^2(0,T; H^2(I))$. Using \eqref{deqn} we conclude 
that $u^k_t(\sigma,t)$ converge to $u_t(\sigma,t)$ weakly in $L^2(0,T; L^2(I))$. It follows, in particular, 
that  $u(\sigma,0)=u_0(\sigma)$. Next, let $\delta>0$ be sufficiently small. It follows from Lemma 
\ref{LemmaIntegralBound} that there exists $\tau\in [\delta,2\delta]$ such that $\|u_{\sigma\sigma}^k(\tau)\|\leq C/\delta$. 
Then by Lemma \ref{LemmaProGlobalGronwal} and  Corollary \ref{CorBoundFor3der} norms  
of $u^k$ in $L^\infty(2\delta,T; H^2_{per}(I))$ and $L^2(2\delta,T; H^3_{per}(I))$ are uniformly bounded. 
Thus $u^k(\sigma, t)$ converge to $u(\sigma,t)$ strongly in $L^2(2\delta,T; H^2_{per}(I))$. This in turn implies
that  $u^k_t(\sigma,t)$ converge to $u_t(\sigma,t)$ strongly in $L^2(0,T; L^2(I))$. Therefore the 
function  $u(\sigma,t)$ solves \eqref{deqn} on $[2\delta,T]$. Since $\delta>0$ can be chosen arbitrarily small, 
Theorem \ref{thm1} is completely proved.



\section{Non-existence of traveling wave solutions}
\label{sec:nonexistence}
The following result proves that smooth ($H^2$) non-trivial traveling wave solutions of \eqref{interface0} do not exist. The idea of the proof is to write equations of motion of the front and back parts of a curve, which is supposed to be a traveling wave solution, using a Cartesian parametrization. Next we show that it is not possible to form a closed, $H^2$-smooth curve from these two parts. We note that every traveling wave curve is always $H^2$ smooth since it is the same profile for all times up to translations.

\begin{theorem}\label{thm2}
Let $\Phi$ satisfy conditions of Theorem \ref{thm1}. If $\Gamma(\sigma,t)
$ is a family of closed curves which are a traveling wave solution of \eqref{interface0}, 
that is $\Gamma(\sigma,t)=\Gamma(\sigma,0)+vt$,
then $v=0$ and $\Gamma(\sigma,0)$ is a circle. \end{theorem}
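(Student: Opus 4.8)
The plan is to use the rigidity of a translating profile to collapse \eqref{interface0} into an \emph{autonomous} ODE for the tangent angle of $\Gamma$, and then to derive a contradiction from the requirement that the curve close up. Since $\Gamma(\sigma,t)=\Gamma(\sigma,0)+vt$, the curvature and the normal velocity at corresponding points do not depend on $t$, hence neither does the constant $\lambda$ in \eqref{lameq1}, so it suffices to analyze the fixed curve $\Gamma:=\Gamma(\cdot,0)$. Because $\|\Phi'\|_{L^\infty(\mathbb{R})}<1$, the map $V\mapsto V-\Phi(V)$ is strictly increasing (this underlies Lemma \ref{lem1}), so \eqref{interface0} is equivalent to the pointwise relation $\kappa=V-\Phi(V)+\lambda$ with $\lambda\in\mathbb{R}$. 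I would fix Cartesian coordinates so that $v=(0,-c)$, $c\ge 0$, orient $\Gamma$ counterclockwise, and argue by contradiction assuming $c>0$.

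With $s$ the arc length, $(\cos\phi,\sin\phi)$ the tangent and $\nu=(-\sin\phi,\cos\phi)$ the inward normal, a rigidly translating curve has normal velocity $V=v\cdot\nu=-c\cos\phi$, so the relation above becomes
\begin{equation*}
\frac{d\phi}{ds}=\kappa=F(\phi),\qquad F(\phi):=\lambda-c\cos\phi-\Phi(-c\cos\phi).
\end{equation*}
This is the tangent-angle form of the ``front and back'' equations of motion: the arcs where $\cos\phi>0$ and where $\cos\phi<0$ are exactly the graphs over the $x$-axis of the leading and trailing parts of $\Gamma$, now handled uniformly. A traveling-wave curve is simple, closed and (as already noted) automatically $H^2$, so its turning number is $+1$: $\phi$ increases by exactly $2\pi$ over the finite length $|\Gamma|$. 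I would use this to rule out zeros of $F$: $F$ is $2\pi$-periodic and Lipschitz, so a zero is an equilibrium of $\dot\phi=F(\phi)$, and by uniqueness a solution starting strictly between two consecutive zeros stays trapped between them and never reaches either, whence its total variation over $[0,|\Gamma|]$ is strictly less than $2\pi$ — contradicting the turning number (a solution starting at a zero is constant, a straight line, not a closed curve). Hence $F$ has constant sign; the orientation forces $F>0$, and $\phi$ becomes an admissible parameter with $ds=d\phi/F(\phi)$.

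Next I would impose closure, $\oint\cos\phi\,ds=\oint\sin\phi\,ds=0$. In the $\phi$-parametrization the second identity is automatic since $F$ is even and the integrand odd, while the first reads
\begin{equation*}
\int_0^{2\pi}\frac{\cos\phi}{\lambda-H(\cos\phi)}\,d\phi=0,\qquad H(w):=cw+\Phi(-cw).
\end{equation*}
Here the hypothesis $\|\Phi'\|_{L^\infty(\mathbb{R})}<1$ is decisive: wherever $\Phi'$ exists, $H'(w)=c\bigl(1-\Phi'(-cw)\bigr)\ge c\bigl(1-\|\Phi'\|_{L^\infty(\mathbb{R})}\bigr)>0$, so the (absolutely continuous) function $H$ is strictly increasing on $[-1,1]$; moreover $\lambda-H>0$ on $[-1,1]$ because $F>0$. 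Substituting $w=\cos\phi$ (and using $\phi\mapsto-\phi$ symmetry to restrict to $[0,\pi]$), then pairing $w$ with $-w$, the integral becomes
\begin{equation*}
2\int_0^1\frac{w}{\sqrt{1-w^2}}\cdot\frac{H(w)-H(-w)}{\bigl(\lambda-H(w)\bigr)\bigl(\lambda-H(-w)\bigr)}\,dw,
\end{equation*}
whose integrand is strictly positive for $w\in(0,1)$ since $H(w)>H(-w)$ and both denominator factors are positive. Thus the integral is strictly positive, contradicting its vanishing. Therefore $c=0$, i.e. $v=0$; and then $V\equiv 0$, so $\kappa\equiv\lambda-\Phi(0)$ is constant and $\Gamma(\sigma,0)$ is a circle.

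I expect the main obstacle to be not the closing computation — which is short and driven entirely by $\|\Phi'\|_{L^\infty(\mathbb{R})}<1$ — but the step that legitimizes the tangent angle as a global parameter for possibly non-convex profiles: ruling out zeros of $F$ and obtaining $F>0$ from the rotation index is where closedness and finiteness of the length are really used, and it requires some care with solutions of $\dot\phi=F(\phi)$ near equilibria.
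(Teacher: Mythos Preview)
Your argument is correct and takes a genuinely different route from the paper. The paper works in local Cartesian coordinates: it writes the bottom of the curve as a graph $y=y(x)$, reduces to an ODE $w'=f_\lambda^c(w)$ for the slope $w=y'$, extends past the vertical tangent by switching to the ``front'' equation with $c\mapsto -c$, and then uses the strict inequality $f_\lambda^{c}>f_\lambda^{-c}$ (coming from monotonicity of $V-\Phi(V)$) to show that the front and back blow-up abscissae satisfy $x_F^*>x_B^*$, so the two graphs cannot be glued into a closed $H^2$ curve. Your approach instead writes the profile globally in the tangent-angle variable, obtaining the autonomous equation $\phi'=F(\phi)$; the rotation index forces $F>0$ (this step replaces the paper's piecewise front/back decomposition and gluing argument), and then the $\cos\phi$ closure integral is shown to be strictly positive by pairing $w\leftrightarrow -w$ and invoking strict monotonicity of $H(w)=cw+\Phi(-cw)$. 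Both proofs ultimately hinge on the same hypothesis $\|\Phi'\|_{L^\infty}<1$, but use it differently: the paper compares two ODE flows pointwise, while you convert it into a sign for an integral. Your argument is more global and avoids the local chart-switching, at the cost of the extra (though standard) step of excluding zeros of $F$ via uniqueness for Lipschitz ODEs; the paper's approach is more explicitly geometric and makes the ``asymmetry between front and back'' visible.
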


\begin{proof}
It is clear that if $v=0$ then a circle is the unique solution of \eqref{interface0}.
 Let $\Gamma(\sigma,t)$ be a traveling wave solution of \eqref{interface0} with non-zero velocity $v$. By Theorem \ref{thm1}, $\Gamma(\cdot,t)$ is smooth ($H^2$) for all $t>0$.  By rotation and translation, we may assume without loss of generality that $v_x=0$, $v_y=c$  with $c>0$ and that 
 $\Gamma(\sigma,t)$ is contained in the upper half plane for all $t\geq 0$. Let $\Gamma(\sigma_0,0)$ be a point of $\Gamma(\sigma,0)$ which is closest to the $x$-axis. Without loss of generality we assume that $\Gamma(\sigma_0,0)=0$.  Locally, we can represent $\Gamma(\sigma,t)$ as a graph over the $x$-axis, $y=y(x) + ct$. Observe that the normal velocity is given by
\begin{equation}
V
=\frac{c}{\sqrt{1+(y'(x)^2)}}
\end{equation}
and the curvature $\kappa$ is expressed as 
\begin{equation}
\kappa(x)= \frac{y''(x)}{(1+(y'(x))^2)^{3/2}}.
\end{equation}
Adopting the notation
\begin{equation}
f_\lambda^c(z) := \left(\frac{c}{\sqrt{1+z^2}}-\Phi\left(\frac{c}{\sqrt{1+z^2}}\right)+\lambda\right)(1+z^2)^{3/2},
\end{equation}
it follows that $y$ solves the equation
\begin{equation}\label{phieqn}
y'' = f_\lambda^c(y')
\end{equation}
where, by construction,
\begin{equation}
y(0)=y'(0)=0.
\end{equation}

Observe that \eqref{phieqn} is a second order equation for $y$ which depends only on $y'$. Thus, we may equivalently study $w:=y'$ which solves
\begin{equation}\label{weqn}
w' = f_\lambda^c(w),\;\;\;\; w(0)=0.
\end{equation}
Note that \eqref{weqn} is uniquely solvable on its interval of existence by Lipschitz continuity of $f_\lambda^c$. Further, the definition of $f_\lambda^c$ guarantees that $w$ has reflectional symmetry over the $y$-axis.

 If \eqref{weqn} has a global solution, $w$, then it is immediate that $y(x) := \int_0^x w(s)ds$ cannot describe part of a closed curve. As such we restrict to the case where $w$ has finite blow-up.
 
Assume the solution $w_B$ of \eqref{weqn} has finite blow-up, 
$w_B(x)\to +\infty$ as $x\to x^*_B$ for some $0<~x^*_B<~\infty$. Then $y_B(x):=\int_0^x w_B(s)ds$ has a vertical tangent vector at $\pm x^*_B$. To extend the solution beyond the point $x^*_B$, we consider $w_F$, the solution of \eqref{weqn} with right hand side $f^{-c}_{\lambda}$. As above we assume that $w_F$ has a finite blow-up at $x^*_F>0$. Defining $y_F(x):=\int_0^x w_F(s)ds$, we have the following natural transformation,
\begin{equation}
\hat{y}_F(x) :=
-y_F(x-(x^*_B-x^*_F))+y_B(x^*_B)+y_B(x^*_F).
\end{equation}
Note that 
gluing $\hat{y}_F$ to $y_B$ forms an $H^2$ smooth curve at the point $(x^*_B,y_B(x^*_B))$ if and 
only if $w_F(x)\to +\infty$ as $x\to x^*_F$. We claim that this is the unique, smooth extension of $y_B$ at $x^*_B$. To that end, consider the rotated coordinate system $(x,y)\mapsto (y,-x)$. 
In this frame, the traveling wave moves with velocity $v_x=c$, $v_y=0$, and can be locally represented as the graph
$x=x(y)+ct$, 
 $x(y)$ solving
\begin{equation}\label{rotatedeqn}
x'' = g_\lambda^c(x')
\end{equation}
with
\begin{equation}
g_\lambda^c(z):= \left(\frac{-cz}{\sqrt{1+z^2}}-\Phi\left(\frac{-cz}{\sqrt{1+z^2}}\right)+\lambda\right)(1+z^2)^{3/2}.
\end{equation}
As before, $g_\lambda^c$ is Lipschitz and so solutions of \eqref{rotatedeqn} are unique, establishing the claim.

To complete the proof, we prove that $x^*_F>x^*_B$, which guarantees that the graphs of $y_B(x)$ and $\hat{y}_F(x)$ can not smoothly meet at $-x^*_B$.  Due to the monotonicity of $V-\Phi(V)$ we have that for any $w$,
$
f^{c}_\lambda(w)>f^{-c}_\lambda(w)
$.
Thus $w'_B> w'_F$ for any fixed $w$. Since $w_B(0)=w_F(0)$, we deduce that $w_B(x)> w_F(x)$ for all $x > 0$. It follows that $x^*_F\geq x^*_B$. Let $x_2\in(0, x^*_B)$ and observe by continuity of $w_B$ that there exists $x_1\in (0,x_2)$ such that $w_B(x_1)=w_F(x_2)$. Consider the solution $\tilde{w}$ of
\begin{equation}
\tilde{w}' = f^c_\lambda(\tilde{w})\;\;\; \tilde{w}(x_2)=w_F(x_2).
\end{equation}
 Note that $\tilde{w}(x) = w_B(x-(x_2-x_1))$ and so the blow-up of $\tilde{w}(x)$ occurs at $x_B^*+x_2-x_1$. Since $\tilde{w}(x)\geq w_F(x)$ for all $x\in (x_2,x_B^*+x_2-x_1)$ it follows that $x^*_F\geq x_B^*+x_2-x_1>x_B^*$. 
 \end{proof}
 
 \begin{figure}[H]
 \centering
 \includegraphics[width=.5\textwidth]{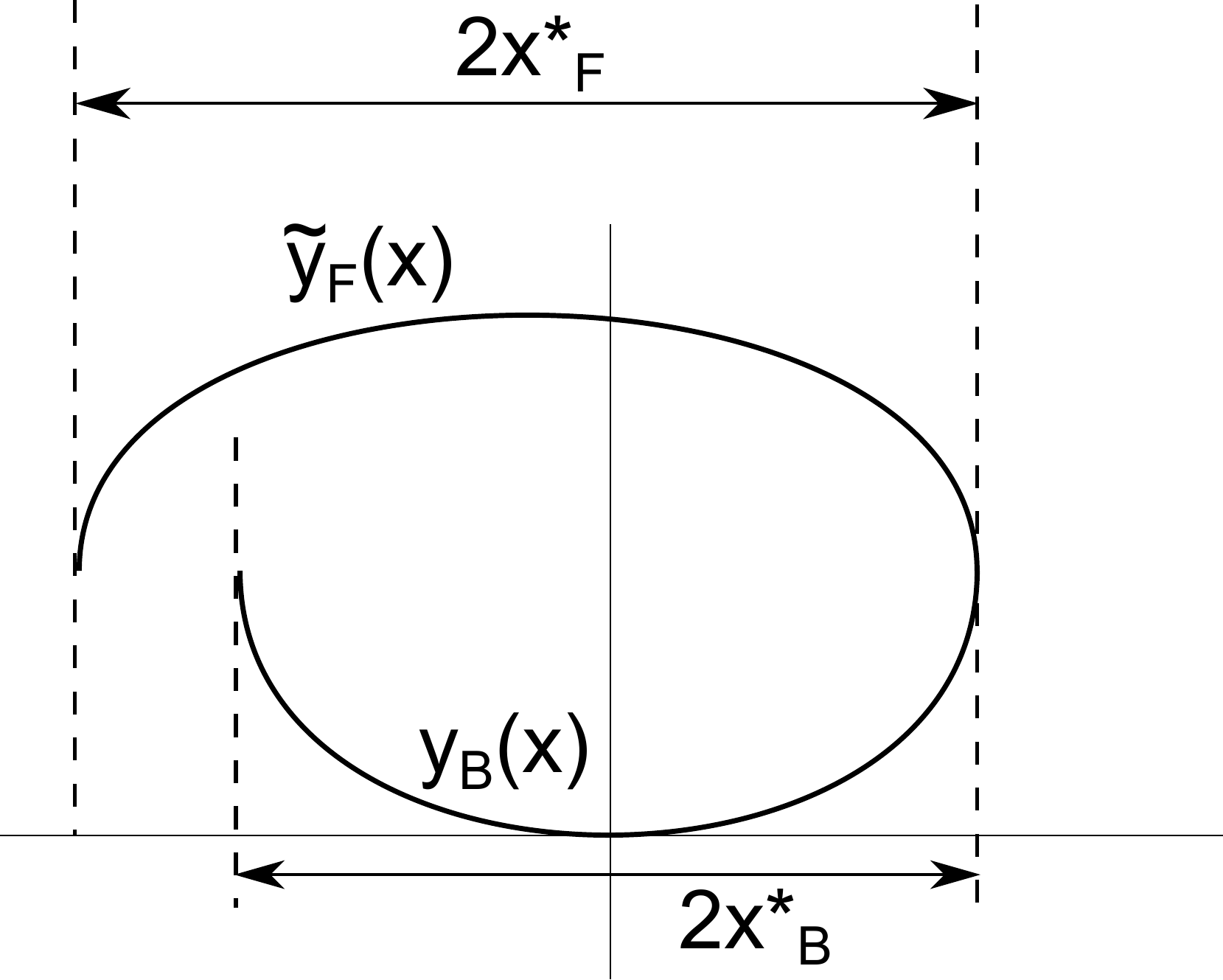}
 \caption{A closed curve cannot be a traveling wave solution}	
 \end{figure}

\section{Numerical simulation; comparison with volume preserving curvature motion}
\label{sec:numerics}

The preceding Section \ref{sec:existence} proves short time existence of curves propagating via \eqref{interface0}, and Section \ref{sec:nonexistence} shows the nonexistence of traveling wave solution. In this section we will numerically solve \eqref{interface0} and for that purpose we will introduce a new splitting algorithm. Using this algorithm, we will be able to study the long time behavior of the cell motion by numerical experiments, in particular, we find that both non-linearity and asymmetry of the initial shape will result in a net motion of the cell.



Specifically we numerically solve the equation \eqref{interface0} written so that the dependence on $\beta$ is explicit ($\beta \Phi_0(V)=\Phi(V)$, see remark \ref{rem:betaequiv}):
\begin{equation}\label{linearinterface}
V(s,t) = \kappa(s,t) + \beta \Phi_0(V(s,t)) - \frac{1}{|\Gamma(t)|}\int (\kappa(s',t)+\beta\Phi_0(V(s',t)))ds',
\end{equation}
We propose an algorithm and use it to compare curves evolving by \eqref{linearinterface} with $0<\beta<\beta_{cr}$, with curves evolving by volume preserving curvature motion ($\beta=0$). 



For simplicity, we assume that $\Phi_0(V)$ defined via \eqref{phi} has a Gaussian shape 
$
\Phi_1(V) := e^{-|V|^2}
$
(see Figure \ref{phicomparison})
which agrees well with the actual graph of $\Phi_0$. This significantly decreases computational time in numerical simulations.

\begin{figure}[h!]
\centering
\includegraphics[width=.5\textwidth]{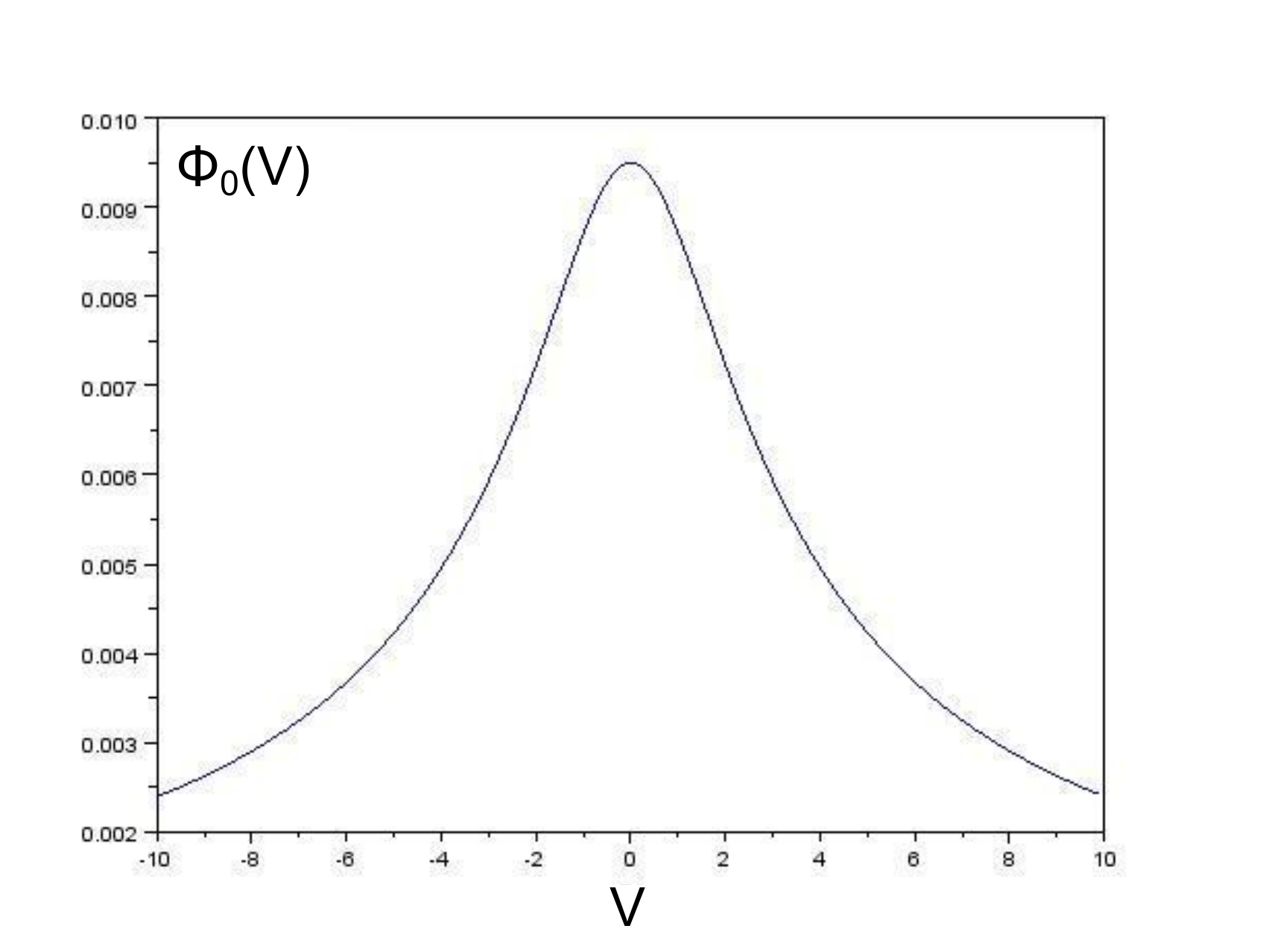}
\caption{Graph of $\Phi_0$}\label{phicomparison}
\end{figure}

\subsection{Algorithm to solve (\ref{linearinterface})}

In the case $\beta=0$ (corresponding to volume preserving curvature motion), efficient techniques such as level-set methods \cite{Osh88,Sme03} and diffusion generated motion methods \cite{Mer93,Ruu03} can be used to accurately simulate the evolution of curves by \eqref{linearinterface}.  There is no straightforward way to implement these methods when $\beta> 0$ since $V$ enters equation \eqref{linearinterface} implicitly.

%

Moreover, due to the non-local term, a naive discrete approximation of \eqref{linearinterface} leads to a system of non-linear equations. 
Instead of using a non-linear root solver, we introduce a {\em splitting scheme} which resolves the two main computational difficulties (non-linearity and volume preservation) separately.

In particular, we decouple the system by solving the $N$ local equations
\begin{equation}\label{localversion}
V_i = \kappa_i +\beta\Phi_1(V_i)-C,
\end{equation}
where $C$ is a constant representing the volume preservation constraint which must be determined. For $\beta<\beta_{cr}$, \eqref{localversion} can be solved using an iterative method which (experimentally) converges quickly. The volume constraint can be enforced by properly changing the value of $C$.

We recall the following standard notations. Let $p_i = (x_i,y_i)$, $i=1,\dots, N$ be a discretization of a curve. Then $h:=1/N$ is the grid spacing and
\begin{equation}
Dp_i := \frac{p_{i+1}-p_{i-1}}{2h}\text{ and  } D^2p_i := \frac{p_{i+1}-2p_i+p_{i-1}}{h^2} \;\;\; (h=1/N)
\end{equation}
are the second-order centered approximations of the first and second derivatives, respectively. Additionally, $(a,b)^\perp = (-b,a)$.

 We introduce the following algorithm for a numerical solution of \eqref{linearinterface}. \\

\begin{algorithm}
To solve \eqref{linearinterface} up to time $T>0$ given the initial shape $\Gamma(0)$. 
\begin{enumerate}
\item[{\em Step 1:}] (Initialization) 
Given a closed curve $\Gamma(0)$, discretize it by $N$ points $p^0_i=(x^0_i,y^0_i)$.

Use the shoelace formula to calculate the area of $\Gamma(0)$:
\begin{equation}\label{eqn:shoelace}
A^o = \frac{1}{2} \left| \sum_{i=1}^{n-1} x^0_i y^0_{i+1} + x^0_n y^0_1 - \sum_{i=1}^{n-1} x^0_{i+1}y^0_i - x^0_1 y^0_n \right|.
\end{equation}

Set time $t:=0$, time step $\Delta t>0$, and the auxiliary parameter $C:=0$. 

\item[{\em Step 2:}] (time evolution) If $t<T$, calculate the curvature at each point, $\kappa_i$ using the formula
\begin{equation}
\kappa_i = \frac{\operatorname{det}(Dp^t_i,D^2 p^t_i)}{\|Dp^t_i\|^3},
\end{equation}
where $\|\cdot\|$ is the standard Euclidean norm.
Use an iteration method to solve 
\begin{equation}\label{discreteV}
V^t_i = \kappa_i+\beta \Phi_1(V^t_i)-C
\end{equation}
to within a fixed tolerance $\varepsilon>0$.

Define the temporary curve 
\begin{displaymath}
p^{temp}_i := p^t_i+V^t_i \nu_i\Delta t,
\end{displaymath} 
where $\nu_i=(D p_i^0)^\perp/\|Dp_i^0\|$ is the inward pointing normal vector.

Calculate the area of the temporary curve $A^{temp}$ using the shoelace formula \eqref{eqn:shoelace} and compute the discrepancy 
\begin{displaymath}
\Delta A:=(A^{temp}-A^{o})\cdot (A^o)^{-1},
\end{displaymath}
If $|\Delta A|$ is larger than a fixed tolerance $\varepsilon$, adjust $C \mapsto C +\Delta A$ and return to solve \eqref{discreteV} with updated $C$. Otherwise define $p_i^{t+\Delta t} := p_i^{temp}$ and 
\begin{displaymath}
\Gamma(t+\Delta t):=\{p_i^{t+\Delta t}\},
\end{displaymath}

Let $t=t+\Delta t$, if $t<T$ iterate Step 2; else, stop.

\end{enumerate}
\end{algorithm}
In practice, we additionally reparametrize the curve by arc length after a fixed number of time steps in order to prevent regions with high density of points $p_i$ which could lead to potential numerical instability and blow-up.
\begin{remark}\label{iterativealg}
In Step 2, for $\beta\leq 1$, the right hand side of \eqref{discreteV} is contractive and thus we may guarantee convergence of the iterative solver to the solution of \eqref{discreteV}.
\end{remark}

We implement the above algorithm in C++ and visualize the data using Scilab. We choose the time step $\Delta t$ and spatial discretization step $\displaystyle h = \frac{1}{N}$ so that
\begin{equation*}
\frac{\Delta t}{h^2} \leq \frac{1}{2},
\end{equation*}
in order to ensure convergence.
Further, we take an error tolerance 
\begin{equation*}
\varepsilon = .0001
\end{equation*}
 in Step 2 for both iteration in \eqref{discreteV} and iteration in $C$.


\subsection{Convergence of numerical algorithm}
To validate results, we check convergence of the numerical scheme in the following way: taking a fixed initial curve $\Gamma(0)$, we discretize it with varying numbers of points: $N=2^m$ for $m=5,\dots,8$, and fix a final time step sufficiently large so that $\Gamma(k\Delta t)$ reaches a steady state circle.  Since there is no analytic solution to \eqref{interface0}, there is no absolute measure of the error. Rather, we define the error between successive approximations $N$ and $2N$. \newline
To this end, we calculate the center location, $C_N=(C_N^1,C_N^2)$, of each steady state circle as the arithmetic mean of the data points. Define the error between circles as
\begin{equation*}
err_N = \|C_N - C_{2N}\|_{\ell^2}.
\end{equation*}
Then, the convergence rate can be expressed as
\begin{equation*}
\rho := \lim_{N\to \infty} \rho_N = \lim_{N\to \infty} \log_2 \frac{err_N}{err_{2N}}.
\end{equation*}
We record our results in Table \ref{tab:1}, as well as in Figure \ref{circleconvzoom}. Note that $\rho\approx 1$, namely our numerical method is 
first-order.

\begin{table}[h]
\centering
\begin{tabular}{| c | l | c|}
\hline
$N$ &  $err_N$ & $\rho_N$ \\ \hline
32 & .2381754 & .9141615 \\ \hline
64 & .1263883 & .9776830 \\ \hline
128 & .0641793 &  1.0019052 \\ \hline
256 & .0320473 & - \\ \hline
\end{tabular}
\vspace*{3mm}
\caption{Table for convergence of numerical simulations}\label{tab:1}
\end{table}

%
%

\begin{figure}[h!]
\centering
\includegraphics[width = .7\linewidth]{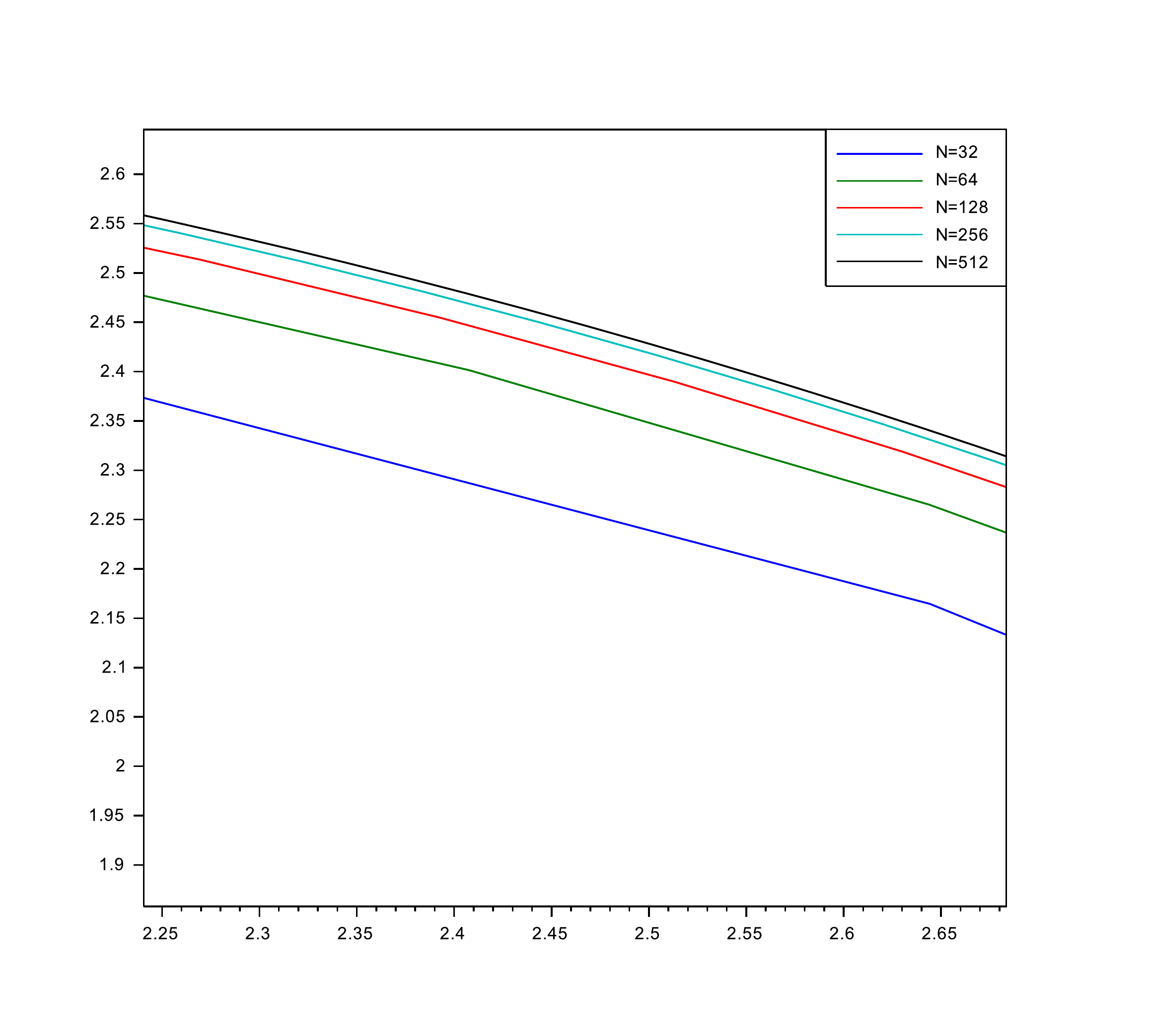}
\caption{Convergence of numerical simulation for decreasing mesh size $h=1/N$ (zoomed in)}\label{circleconvzoom}
\end{figure}

\subsection{Numerical experiments for the long time behavior of cell motion}

We next present two numerical observations for the subcritical case $\beta<\beta_{cr}$. \\
 
\noindent{\bf 1.} {\em If a curve globally exists in time, then it tends to a circle.}
	

That is, in the subcritical $\beta$ regime, curvature motion dominates non-linearity due to $\Phi(V)$. This is natural, since for small $\beta$ the equation \eqref{linearinterface} can be viewed as a perturbation of volume preserving curvature motion, and it has been proved (under certain hypotheses) that curves evolving via volume preserving curvature motion converge to circles \cite{Esc98}. \newline

 In contrast, the second observation distinguishes the evolution of \eqref{linearinterface} from volume preserving curvature motion.\\

\noindent{\bf 2. } {\em There exist curves whose centers of mass exhibit net motion on a finite time interval (transient motion).}\\

The key issue in cell motility is (persistent) net motion of the cell. Although Theorem \ref{thm2} implies that no non-trivial traveling wave solution of \eqref{interface0} exists, observation 2 implies that curves propagating via \eqref{linearinterface} may still experience a transient net motion compared to the evolution of curves propagating via volume preserving curvature motion. We investigate this transient motion quantitatively with respect to the non-linear parameter $\beta$ and the initial geometry of the curve.

\subsubsection{Quantitative investigation of observation 2}



Given an initial curve $\Gamma(0)$ discretized into $N$ points $p_i^0$ and given $0\leq \beta<\beta_{cr}$, we let $\Gamma_\beta(k\Delta t):=\{p_i^k\}$ be the curve at time $k\Delta t$, propagating by \eqref{linearinterface}. 
In particular, $\Gamma_0(k\Delta t)$ corresponds to the evolution of the curve by volume preserving curvature motion. 

Our prototypical initial curve $\Gamma(0)$ is parametrized by four ellipses and is sketched in Figure \ref{fig:initialcurves}.

\begin{figure}[h!]
\centering
\includegraphics[width=.4\linewidth]{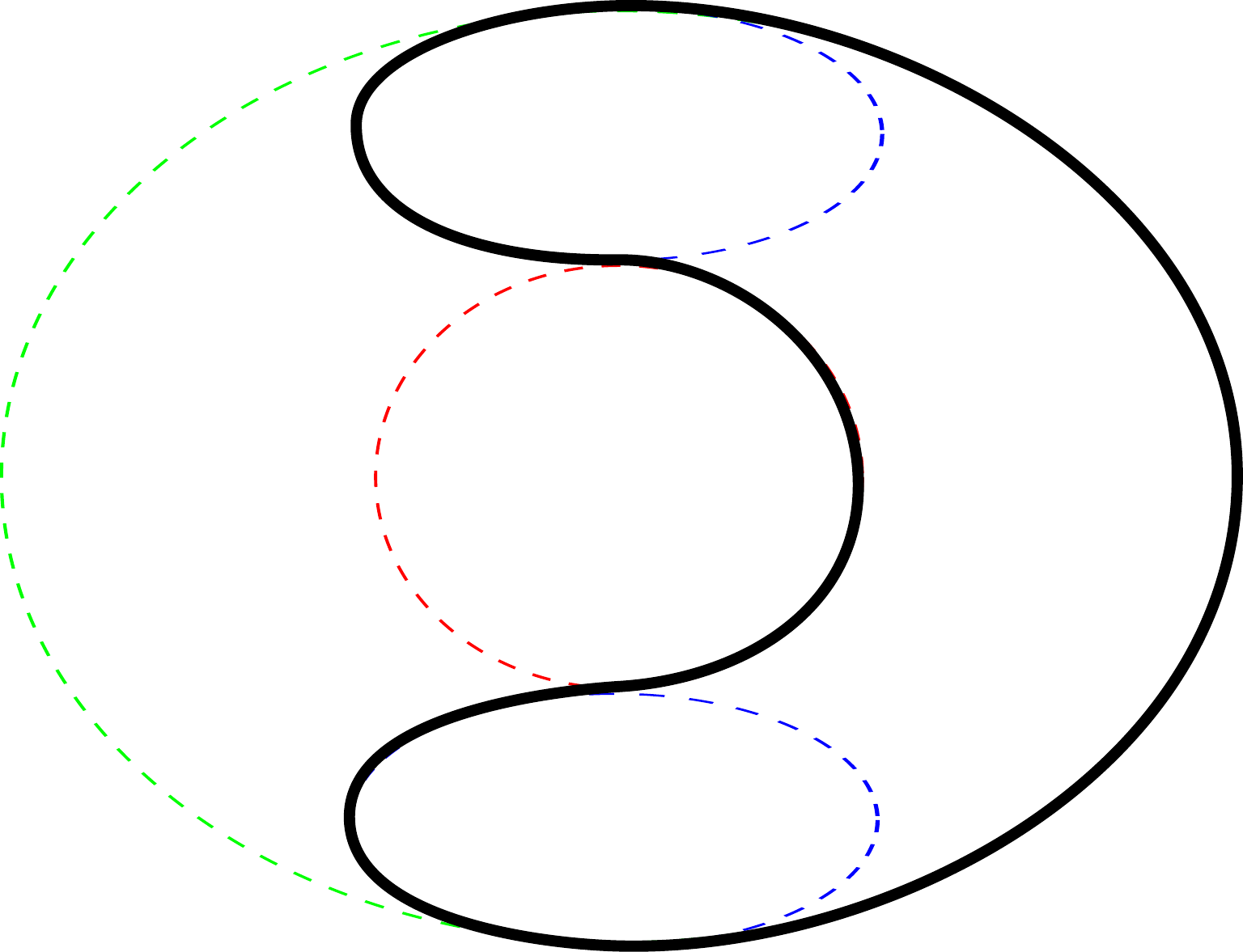}
\caption{Initial curve}\label{fig:initialcurves}
\end{figure}

\begin{align}
\label{paramstart}(4\cos(\theta),3\sin(\theta)) &\text{ for } -\frac{\pi}{2}\leq \theta\leq \frac{\pi}{2} \\
(2\cos(\theta),\frac{3}{4}\sin(\theta)+\frac{9}{4}) &\text{ for } \frac{\pi}{2}\leq \theta\leq \frac{3\pi}{2} \\
\label{zetaeqn} (\zeta \cos(\theta),\frac{3}{2} \sin(\theta)) &\text{ for } -\frac{\pi}{2} \leq \theta \leq \frac{\pi}{2}\\
\label{paramend} (2\cos(\theta), \frac{3}{4}\sin(\theta)-\frac{9}{4}) &\text{ for } \frac{\pi}{2}\leq \theta \leq \frac{3\pi}{2}.
\end{align}
The parameter $\zeta$ determines the depth of the non-convex well and is used as our measure of initial asymmetry of the curve.

\begin{figure}[h!]
\centering
\includegraphics[scale=.3]{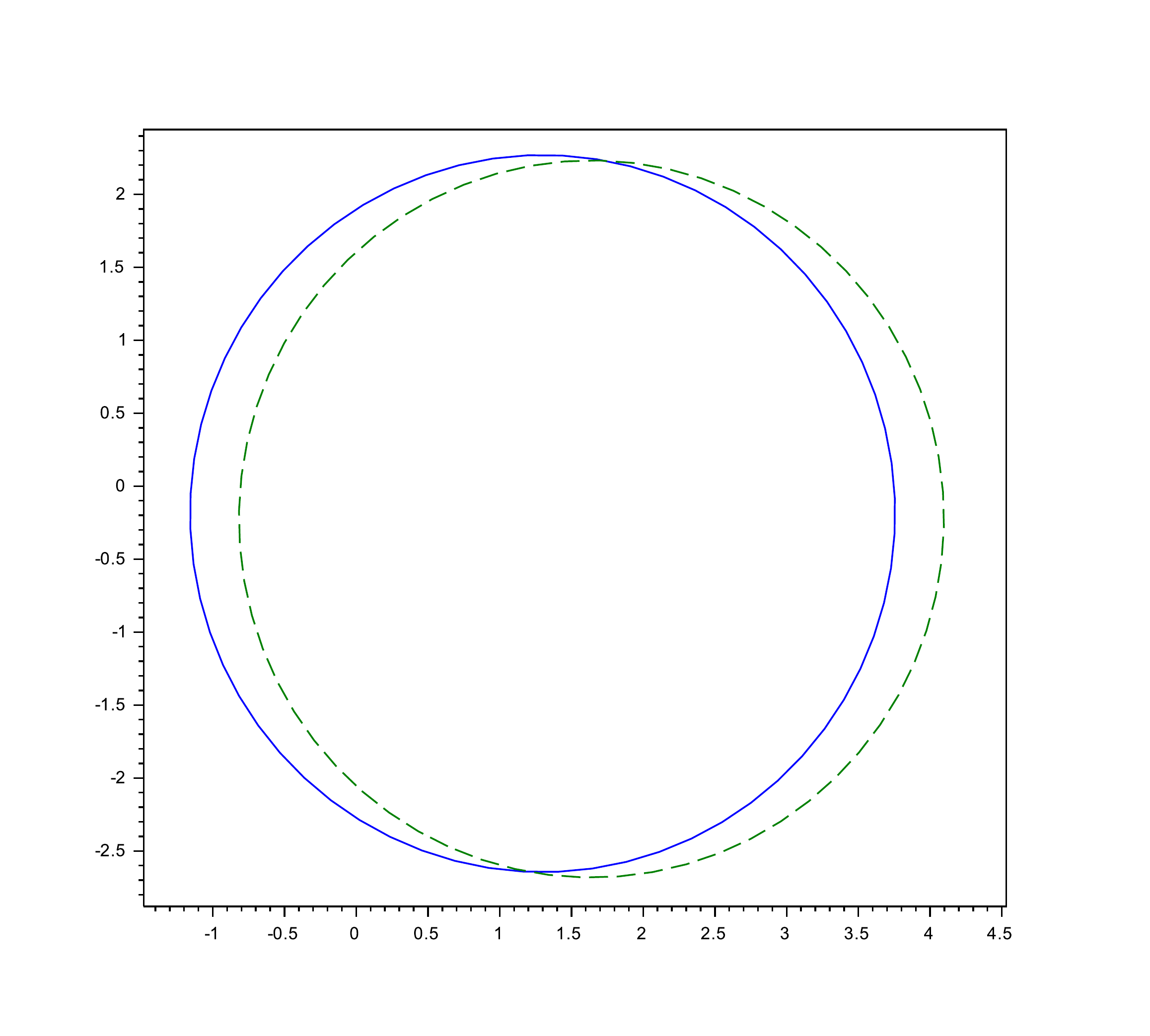}
\caption{Overall drift of curve with $\beta=1$ (blue, solid) compared to $\beta=0$ (green, dashed), starting from initial data \eqref{paramstart}-\eqref{paramend} with $\zeta=2$}\label{shift}
\end{figure}

To study the effect of $\beta$ and asymmetry on the overall motion of the curve, we measure the total transient motion by the following notion of the {\em drift of $\Gamma_\beta$}. First fix $\Gamma(0)$. Note that observation 1 implies that for sufficiently large time $k\Delta t$,  $\Gamma_\beta(k\Delta t)$ and $\Gamma_0(k\Delta t)$ will both be steady state circles. Define the {\em drift of $\Gamma_\beta$} to be the distance between the centers of these two circles.
\begin{remark} 
Note that this definition is used in order to account for numerical errors which may accumulate over time. Numerical drift of the center of mass of the curve caused by errors/approximations is offset by ``calibrating'' to the $\beta=0$ case.
\end{remark}
We consider the following two numerical tests:
\begin{enumerate}
\item {\em Dependence of drift on $\beta$:} Starting with the initial profile \eqref{paramstart}-\eqref{paramend} with $\zeta=1$, compute the drift of $\Gamma_{\beta}$ for various values of $\beta$.
\item {\em Dependence of drift on initial asymmetry:} Starting with the initial curve \eqref{paramstart}-\eqref{paramend} with $\beta=1$, compute the drift of $\Gamma_\beta$ for various values of $\zeta$.
\end{enumerate}
Taking $T=20$ is sufficient for simulations to reach circular steady state. We observe that drift increases with respect to $\zeta$ and increases linearly with respect to $\beta$. These data are recorded in Figure \ref{betatest}.

%

\begin{figure}[h!]
\centering
\includegraphics[width = .45\textwidth]{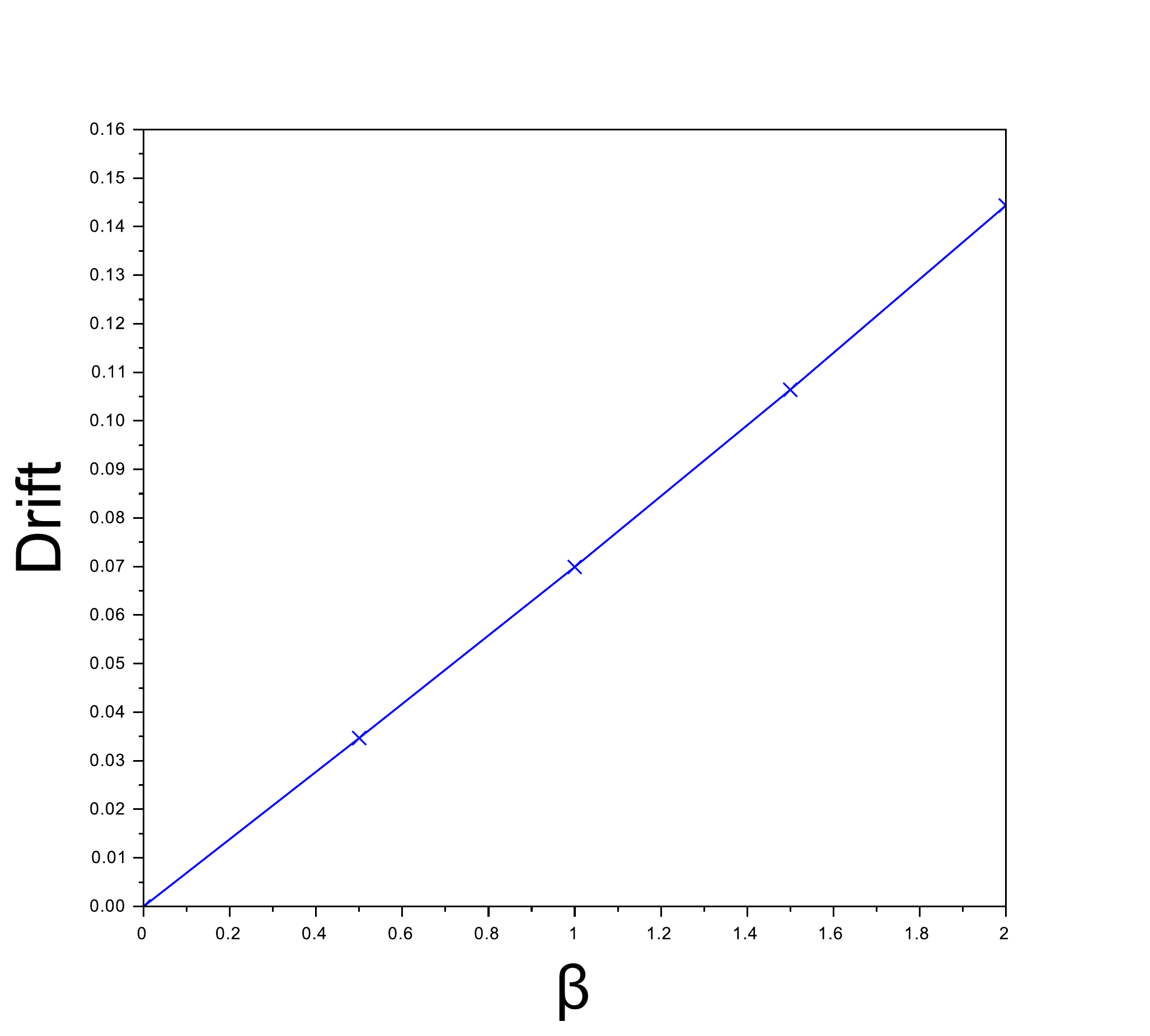}
\includegraphics[width = .45\textwidth]{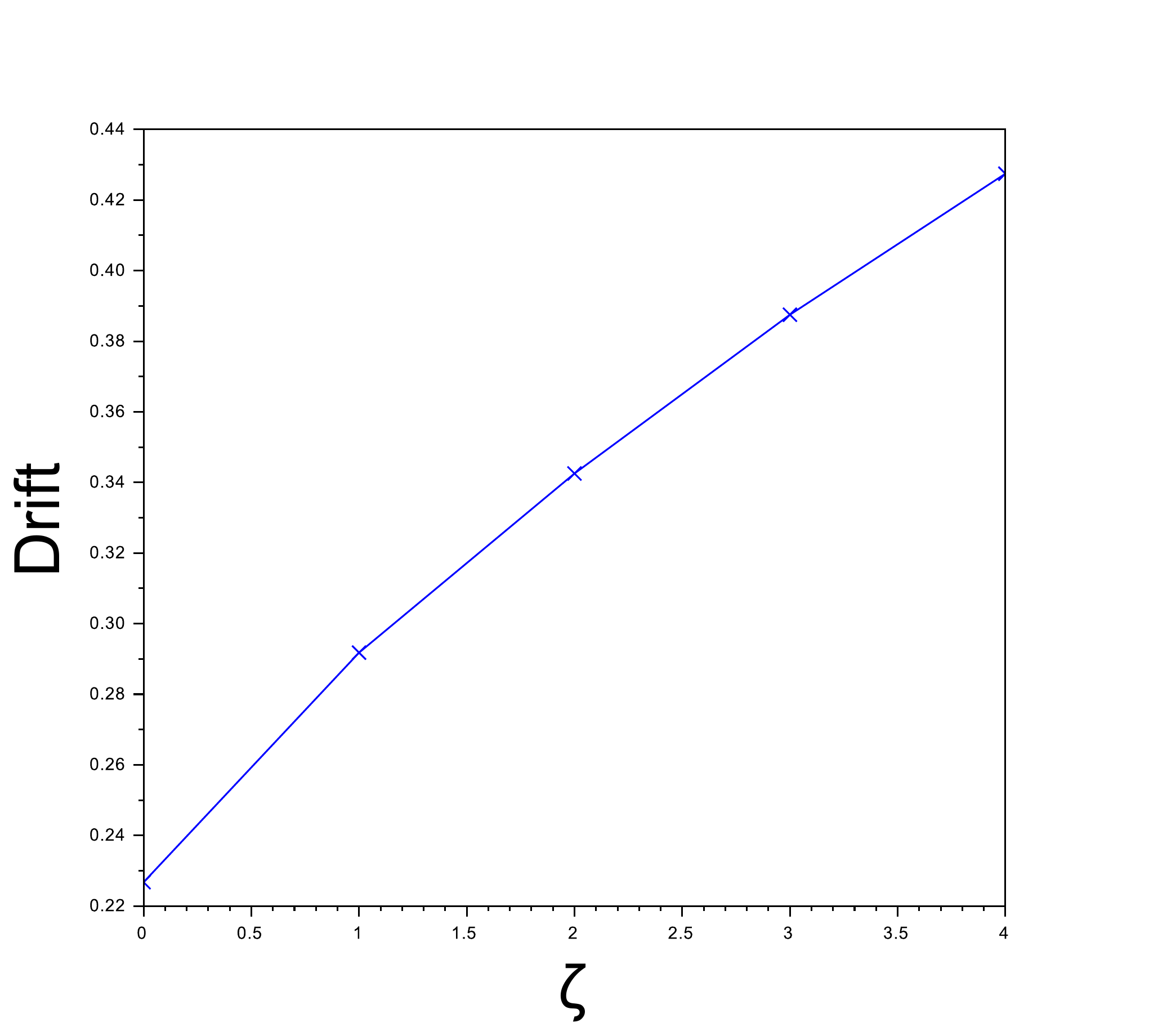}
\caption{Dependence of drift on the parameters $\beta$ and $\zeta$}\label{betatest}
\end{figure}

\medskip


\bibliographystyle{siam}
\bibliography{cellref}

\end{document}